\providecommand{\abs}[1]{\lvert#1\rvert}
\renewcommand{\b}[1]{\ensuremath{\mathbf{#1}}} 
\newcommand{\bi}[1]{\ensuremath{\bm{#1}}} 
\renewcommand{\c}[1]{\ensuremath{\mathcal{#1}}} 
\newcommand{\norm}[1]{\ensuremath{\left\|#1\right\|}} 
\newcommand{\norme}[1]{\ensuremath{\|#1\|}} 
\def\ceil#1{\lceil #1 \rceil}
\def\floor#1{\lfloor #1 \rfloor}
\providecommand{\ip}[2]{\langle #1, #2 \rangle} 
\newcommand{\Br}[3]{\mathrm{Broyd}_{#1}^{#3}(#2)}
\newcommand{\bfgs}[1]{\mathrm{BFGS}(#1)}
\newcommand{\dfp}[1]{\mathrm{DFP}(#1)}
\DeclareMathOperator*{\argmax}{arg\,max}
\DeclareMathOperator*{\argmin}{arg\,min}
\def \iqn {{\textit{siqn}}}
\def \x {{\bi{x}}}
\def \y {{\bi{y}}}
\def \b {{\bi{b}}}
\def \a {{\bi{a}}}
\def \z {{\bi{z}}}
\def \s {{\bi{s}}}
\def \e {{\bi{e}}}
\def \w {{\bi{w}}}
\def \g {{\bi{g}}}
\def \u {{\bi{u}}}
\def \v {{\bi{v}}}
\def \bp{{\bi{\psi}}}
\def \A {{\bi{A}}}
\def \B {{\bi{B}}}
\def \H {{\bi{H}}}
\def \I {{\bi{I}}}
\def \G {{\bi{G}}}
\def \P {{\bi{P}}}
\def \Q {{\bi{Q}}}
\def \D {{\bi{D}}}
\def \K {{\bi{K}}}
\def \X {{\bi{X}}}
\def \Y {{\bi{Y}}}
\def \Z {{\bi{Z}}}
\def \T {{\mathsf{T}}}
\def \O {{\c{O}}}
\def \Rn {{\mathbb{R}}}
\def \ub {{\bar{\u}}}
\def\cO{\mathcal{O}}
\def \zero {{\mathbf{0}}}
\def\nn{\nonumber}
\def \bB {{\bar{\B}}}
\newcommand{\new}{\marginpar{NEW}}
\newtheorem{assumption}{}
\newtheorem{rem}{\bf Remark}
\newtheorem{thm}{Theorem}
\newtheorem{theorem}{Theorem}
\newtheorem{lem}{Lemma}
\newtheorem{lemma}{Lemma}
\newenvironment{proof}{\paragraph{Proof:}}{\hfill$\square$}
\newtheorem{proposition}[theorem]{Proposition}
\newtheorem{corollary}[theorem]{Corollary}
\def \Lem {{\textit{Lem}}}
\def \new {{\textit{new}}}
\author{
	Aakash Lahoti$^*$ \\
	Carnegie Mellon University \\
	\texttt{alahoti@andrew.cmu.edu}
	\And
	Spandan Senapati$^*$ \\
	University of Southern California \\
	\texttt{ssenapat@usc.edu}
	\AND
	\hspace{-12.5mm}Ketan Rajawat \\
		\hspace{-12.5mm}Indian Institute of Technology Kanpur \\
		\hspace{-12.5mm}\texttt{ketan@iitk.ac.in} 
	\And
		\hspace{-10mm}Alec Koppel \\
	\hspace{-10mm}J.P. Morgan AI Research \\
		\hspace{-10mm}\texttt{alec.koppel@jpmchase.com}
}	
 \title{Sharpened Lazy Incremental Quasi-Newton Method}
\begin{document}
\maketitle
\doparttoc 
\faketableofcontents 


%

%







\begin{abstract}
The problem of minimizing the sum of \(n\) functions in \(d\) dimensions is ubiquitous in machine learning and statistics.
In many applications where the number of observations \(n\) is large, it is necessary to use incremental or stochastic methods, as their per-iteration cost is independent of \(n\).
Of these, Quasi-Newton (QN) methods strike a balance between the per-iteration cost and the convergence rate. 
Specifically, they exhibit a superlinear rate with \(\mathcal{O}(d^2)\) cost in contrast to the linear rate of first-order methods with \(\mathcal{O}(d)\) cost and the quadratic rate of second-order methods with \(\mathcal{O}(d^3)\) cost.~However, existing incremental methods have notable shortcomings: Incremental Quasi-Newton (IQN) only exhibits asymptotic superlinear convergence.
In contrast, Incremental Greedy BFGS (IGS) offers explicit superlinear convergence but suffers from poor empirical performance and has a per-iteration cost of $\mathcal{O}(d^3)$.
To address these issues, we introduce the Sharpened Lazy Incremental Quasi-Newton Method (SLIQN) that achieves the best of both worlds: an explicit superlinear convergence rate, and superior empirical performance at a per-iteration $\mathcal{O}(d^2)$ cost. 
SLIQN features two key changes: first, it incorporates a hybrid strategy of using both classic and greedy BFGS updates, allowing it to empirically outperform both IQN and IGS.
Second, it employs a clever constant multiplicative factor along with a lazy propagation strategy, which enables it to have a cost of $\mathcal{O}(d^2)$.
Additionally, our experiments demonstrate the superiority of SLIQN over other incremental and stochastic Quasi-Newton variants and establish its competitiveness with second-order incremental methods.
\end{abstract}

\section{INTRODUCTION}

\def\thefootnote{*}\footnotetext{Equal Contribution. Work done when AL and SS were at the Indian Institute of Technology Kanpur.}
\def\thefootnote{1}\footnotetext{Accepted at the 27th International Conference on Artificial Intelligence and Statistics (AISTATS) 2024.}
We consider the finite sum minimization problem,
\begin{align}
	\label{eq:main_prob}\tag{$\mathcal{P}$}
	\x^\star = \arg\min_{\x \in \Rn^d} \frac{1}{n} \sum_{i=1}^{n} f_i(\x),
\end{align}	
where each $f_i$ is $\mu$-strongly convex, $L$-smooth and has a Lipschitz continuous Hessian.
The canonical example of \eqref{eq:main_prob} is the empirical risk minimization problem in supervised learning, where $\x$ denotes model parameters, $n$ is the number of training samples, and $f_i$ denotes the loss incurred by the $i^{\text{th}}$ sample. 
Other instances of this problem arise in maximum likelihood estimation (\cite{mle_1,mle_2}), control theory (\cite{wu2018error}), and unsupervised learning (\cite{song}).
In many applications, \eqref{eq:main_prob} is both high-dimensional (large $d$) and data-intensive (large $n$). 

When $n$ is large, it becomes infeasible to process the entire dataset at every iteration, thus making classical algorithms such as gradient descent or Newton's method impractical. 
Consequently, stochastic and incremental variants of these algorithms have been widely adopted for such problems, because their per-iteration complexity is independent of $n$.
While first-order methods like stochastic gradient descent (SGD) enjoy a low per-iteration complexity of $\mathcal{O}(d)$, their convergence rates, even with enhancements like variance reduction or acceleration (\cite{saga, johnson2013accelerating}), remain linear at best. 
In contrast, second-order methods like Newton Incremental Method (NIM) by \cite{rodomanov2016superlinearly}, achieve a superlinear rate but at a $\O(d^3)$ cost, which is prohibitively large for high-dimensional problem settings.

Stochastic and incremental Quasi-Newton (QN) methods strike a balance between the computational efficiency of SGD and the fast convergence rate of NIM.
Specifically, the Incremental Quasi-Newton (IQN) method from  \cite{mokhtari2018iqn}, was the first QN method to achieve a superlinear convergence rate with a per-iteration complexity of $\O(d^2)$. 
However, the analysis presented in \cite{mokhtari2018iqn} was asymptotic and did not include an explicit rate of convergence. 
Typically, explicit rates are preferred over asymptotic ones as they enable a more fine-grained comparison among algorithms. 
For instance, with $\rho \in (0,1)$,
both $\mathcal{O}(\rho^{t^2})$ and $\mathcal{O}(\rho^{t\ln(t)})$ 
qualify as superlinear rates. However, the former, $\mathcal{O}(\rho^{t^2})$, is faster than the latter, $\mathcal{O}(\rho^{t\ln(t)})$. 
Furthermore, the mathematical expression of $\rho$ informs the rate's dependence on parameters like condition number and dimension.

The Incremental Greedy BFGS (IGS) method by \cite{gao2020incremental} aimed to address this issue by incorporating the greedy updates, first introduced in \cite{rodomanov2021greedy}, into the IQN framework.  
While IGS achieves an explicit superlinear convergence rate of $\O(e^{-t^2/n^2})$,
it suffers from several major drawbacks:
First, like NIM, it has a large per-iteration cost of $\mathcal{O}(d^3)$. 
This stems from IGS's Hessian updates not being low ran, which precludes an efficient evaluation of the Hessian inverse.
Since this complexity mirrors that of NIM, it undermines the computational advantages of incorporating QN updates.
Second, the empirical performance of IGS was not studied in \cite{gao2020incremental}, and our experiments (ref.~Figure \ref{all_combined_SN_edited_II.eps}) indicate that on multiple datasets, IGS severely underperforms compared to IQN. 
Finally, IGS lacked theoretical analysis to support its lemmas and theorems.
In this light, we ask the following question: 

\textit{Can we devise an incremental QN method with a per-iteration complexity of $\O(d^2)$, achieving the best-known incremental rate of $\O(e^{-t^2/n^2})$, and demonstrating superior empirical performance?}

We put forth the Sharpened Lazy IQN (SLIQN) method that meets all these objectives. 
SLIQN is inspired by the recent work of \cite{jin2022sharpened}, which showcased the superior performance of sharpened updates over greedy updates in the \textit{non-incremental setting}.
We first show that a direct incorporation of sharpened updates into the IQN framework does not work because the Hessian update matrices corresponding to sharpened updates are not low rank, and therefore the resulting Sharpened IQN (SIQN) method incurs a per-iteration cost of $\O(d^3)$.
We then propose our novel Sharpened Lazy IQN (SLIQN) algorithm that overcomes this limitation by modifying the updates of SIQN using a clever constant multiplicative factor and incorporating a lazy propagation strategy. 
The resulting algorithm incurs a per-iteration complexity of $\O(d^2)$ and achieves a convergence rate of $\O(\rho^{t^2/n^2})$, where $\rho := 1-\mu/dL$, which is the best-known rate in the incremental setting.
We also establish an explicit linear rate of convergence of the Hessian approximation to the true Hessian. 
Moreover, in contrast to IGS, we provide a comprehensive theoretical analysis.
Furthermore, we demonstrate the superior empirical performance of SLIQN as compared to IQN, IGS, and other state-of-the-art incremental and stochastic QN methods.\footnote{The code for the experiments is available on the repository: \href{https://github.com/aakashlahoti/sliqn}{https://github.com/aakashlahoti/sliqn}.} 
Notably, SLIQN demonstrates performance competitive to NIM, 
which is a second-order algorithm, with a $\O(d^3)$ cost, that utilizes the full Hessian information when taking the descent step.

\section{RELATED WORK}
In recent decades, several works have developed first-order, QN, and second-order methods for stochastic or incremental settings. 
Typically, the goal for first-order methods is to achieve a linear rate at a \(\O(d)\) cost, while for QN and second-order methods, the goals are to achieve superlinear rates at costs of \(\O(d^2)\) and \(\O(d^3)\), respectively.
These methods cater to different objectives: first-order methods are preferred for low-precision solutions, due to their lower computational cost, whereas higher-order methods are more effective for high-precision solutions, due to their faster rate.

Early works like \cite{mokhtari2014res, mokhtari2015global,byrd2016stochastic} were only successful in developing QN methods with sub-linear convergence guarantees. Subsequent works like \cite{moritz2016linearly,  chang2019accelerated, derezinski2022stochastic}
employed various acceleration and variance reduction techniques to recover a linear rate. 
IQN by \cite{mokhtari2018iqn} was the first QN algorithm to achieve an asymptotic superlinear rate of convergence.
IGS by \cite{gao2020incremental} employed greedy updates, introduced in \cite{rodomanov2021greedy}, within the IQN framework to derive a explicit superlinear rate, albeit at a large $\mathcal{O}(d^3)$ per-iteration cost.
Another recent work, \cite{san}, put forth a QN style algorithm with a cost of $\mathcal{O}(d)$ for Generalized Linear Models (GLMs). However, the method only enjoys a linear rate of convergence, and is inefficient for general functions with a cost of $\mathcal{O}(d^3)$.

Other works have focused on developing first-order and second-order methods for stochastic or incremental settings. 
First order methods like \cite{saga, johnson2013accelerating} have employed variance reduction techniques to derive methods with a linear rate of convergence. 
On the second-order front, recent works include the Newton Incremental method (NIM) by \cite{rodomanov2016superlinearly} and Stochastic Newton (SN) by \cite{kovalev2019stochastic}.
While SN and NIM are both Newton-like methods with a per-iteration complexity of \(\cO(d ^ 3)\), NIM has a fast superlinear convergence rate while SN only enjoys a linear rate of convergence. The only setting under which SN has been shown to converge superlinearly is with a full batch of size $n$.
We have consolidated the memory usage, computational cost, and convergence rates of principal-related algorithms in Table \ref{tb:algo_table}. 

In a different line of work, Newton-LESS from \cite{derezinski2021newton} utilized sketching algorithms like Newton Sketch \cite{pilanci2016iterative, pilanci2017newton} to attain a local linear convergence rate. 
Additional works such as \cite{gonen2016solving, liu2019acceleration} have used second-order information to accelerate SVRG, which is a first-order method. However, both methods were only able to attain an improved linear rate of convergence.

\begin{table*}[t]
	\begin{center}
		\begin{tabular}{|c | c | c | c | c |} 
			\hline
			Algorithm & Memory & Computation cost & Convergence Rate & Limit  \\ 
			\hline
			SN & {\(\bm{\cO(d^2)}\)} & \(\cO(d ^ 3)\)& Linear & \textbf{Non-asymptotic}\\ 
			NIM & \(\cO(nd + d ^ 2)\) & \(\cO(d ^ 3)\) & \textbf{Superlinear} & \textbf{Non-asymptotic} \\
			IQN & \(\cO(nd ^ 2)\) & \(\bm{\cO(d ^ 2)}\) & \textbf{Superlinear} & Asymptotic \\
			IGS & \(\cO(nd ^ 2)\) & \(\cO(d ^ 3)\) & \textbf{Superlinear} & \textbf{Non-asymptotic}\\
			SIQN(This work) & \(\cO(nd ^ 2)\) & \(\cO(d ^ 3)\) & \textbf{Superlinear} & \textbf{Non-asymptotic}\\
			SLIQN(This work) & \(\cO(nd ^ 2)\) & \(\bm{\cO(d ^ 2)}\) & \textbf{Superlinear} & \textbf{Non-asymptotic} \\ 
			\hline
		\end{tabular}
	\end{center}
	\caption{\label{tb:algo_table} Comparison of the maximum memory requirement, computation cost (per iteration), convergence rate, and the limit of attainment of the convergence rate for different algorithms.}
\end{table*}

\section{NOTATION AND PRELIMINARIES}\label{sec:overview}
Vectors (matrices) are denoted by lowercase (uppercase) bold alphabets. 
The $i$-th standard basis vector of $\Rn^d$ is denoted by $\e_i\in\Rn^d$ for $i\in [d] \coloneqq  \{1, \ldots, d\}$. 
We define the index function $i_t  \coloneqq  1 + (t-1) \bmod n$. The symbol $\zero$ denotes the all-zero matrix or vector, whose size can be inferred from the context. 
We use \(\X \succeq \zero \) and \(\X \succ \zero \) to denote that the symmetric matrix \(\X\) is positive semi-definite and positive definite, respectively. 
Likewise, the notation \(\X \succeq \Y\) ($\X \succ \Y$) denotes \(\X - \Y \succeq \zero \) ($\X-\Y\succ \zero$). 
For vectors \(\u, \v \in \Rn ^ d\), we denote the inner product by \(\ip{\u}{\v}  \coloneqq  \u ^ {\T}\v\) and the Euclidean norm by $\norm{\u}  \coloneqq  \sqrt{\ip{\u}{\u}}$. 
For matrices  \(\X, \Y \in \Rn ^ {d\times d}\), we define \(\ip{\X}{\Y}  \coloneqq  \mathrm{Tr}(\X^\T\Y)\), and we let \(\norm{\X}\) denote the spectral norm of the matrix. 
Given a convex function \(f: \Rn ^ d \to \Rn\), 
we define the norm of the vector $\y$ with respect to $\nabla^2f(\x)$ as
\(\norm{\y}_\x  \coloneqq  \sqrt{\ip{\y}{\nabla ^ 2 f(\x)\y}}\).
For a function \(f\), we denote \(\mu\) as the strong convexity parameter, \(L\) as the smoothness parameter, \(\tilde{L}\) as the Hessian Lipschitz continuity parameter, and \(M = \tilde{L}\mu^{-\frac{3}{2}}\) as the strong self-concordance parameter.

\subsection{Quasi-Newton (QN) Methods}
We introduce QN methods as an iterative algorithm to optimize the problem \eqref{eq:main_prob} with $n=1$. 
At iteration $t \in \mathbb{Z}_{+}$, given the current iterate $\x^t$ and the positive definite Hessian approximation $\B^t$ of $\nabla^2f(\x^t)$, the next iterate $\x^{t+1}$ is computed as,
\begin{align}\label{eq:quasi_newton_template}
	\x^{t+1} &=  \x^{t} - (\B^t)^{-1} \nabla f(\x^t).
\end{align} 

The Hessian approximation for the next iteration $\B^{t+1}$, is obtained by applying a constant rank update to $\B^{t}$. 
The precise update distinguishes the exact type of QN algorithm, such as BFGS, DFP, or SR1 (\cite{nocedal}).
The efficiency of QN methods, $\mathcal{O}(d^2)$, over second-order methods, $\mathcal{O}(d^3)$, stems from the fact that the $\B^{t}$ update is low rank, which allows us to use Sherman-Morrison formula (Appendix \ref{sec:prelim_lemma}) to efficiently evaluate $(\B^{t+1})^{-1}$ from $(\B^{t})^{-1}$ in $\mathcal{O}(d^2)$ cost.

Though in the remainder of the paper, we are primarily concerned with BFGS updates, all the follows can also be extended to the entire restricted Broyden class (Appendix \ref{sec:restricted_broyden}). 
Given a matrix $\K$ and its approximation $\B$, the generalized BFGS update refines this approximation along direction $\u\in \Rn^d$ as,
\begin{align}
	\label{BFGS}
	\bfgs{\B, \K, \u} = \B_{+} 
	\coloneqq
	\B - 
	\frac{
		\B\u\u ^ {\T}\B
	}{
		\ip{\u}{\B\u}
	} 
	+ 
	\frac{
		\K\u\u ^ {\T}\K
	}
	{
		\ip{\u}{\K\u}
	}.
\end{align}
Setting \(\K^t = \int_{0} ^ {1} \nabla ^ 2 f(\x ^ {t} + \tau(\x^ {t + 1} - \x ^ {t})) d\tau\),
and $\u^t = \s^t := \x ^ {t + 1} - \x ^ t$ yields the classical BFGS update,
\begin{align}\label{bfgs}
	\B ^ {t + 1} &= 
	\bfgs{\B ^ {t}, \K ^ {t}, \u ^ t} = 
	\B ^ t - 
	\frac{
		\B ^ t\s ^ t(\s ^ t) ^ {\T}\B ^ t
	}
	{
		\ip{\s ^ t}{\B ^ {t}\s^t}
	} + 
	\frac{
		\y ^ t (\y ^ t) ^ {\T}
	}
	{
		\ip{\s ^ t}{\y ^ {t}}
	},
\end{align}
where $\y_t \coloneqq \K ^ {t}\s^t = \nabla f (\x ^ {t + 1}) - \nabla f (\x ^ t)$. 
This update seeks to approximate the Hessian along the Newton direction $\s^t$ and has been shown by  \cite{jin2022non, rodomanov2021rates, rodomanov2020new} to achieve a superlinear convergence rate. Furthermore, since BFGS makes a rank $2$ update to $\B^t$, we can use the Sherman-Morrison formula twice to evaluate \((\B ^ {t+1}) ^ {-1}\) from  \((\B ^ {t}) ^ {-1}\) in \(\cO(d ^ {2})\) cost. 

In contrast to classical BFGS update, the greedy BFGS update by \cite{rodomanov2021greedy} sets $\K^t = \nabla^2f(\x^t)$, and defines the greedy vector,
\begin{align}\label{greedy_vec}
	\bar{\u}(\B^t, \K^t) 
	\coloneqq
	\argmax_{\u \in \{\e_i\}_{i = 1} ^ d} 
	\tfrac{
		\ip{\u}{\B^t\u}
	}
	{\ip{\u}      
		{\K^t\u}},
\end{align}
which results in $\B^{t+1} = \bfgs{\B^t, \K^t, \ub^t(\B ^ t, \K ^ t)}$. 
Greedy BFGS, similar to classic BFGS, exhibits a superlinear rate of convergence. 
However, unlike classic BFGS, it can guarantee convergence in the $\sigma$ sense.
Specifically, the Hessian approximation error, 
\begin{align}\label{sigma}
	\sigma(\B^t, \K^t) \coloneqq \langle (\K^t)^{-1}, \B^t - \K^t \rangle,
\end{align}
decays linearly with $t$. 
In practice, a trade-off exists between the two updates.
Greedy BFGS dedicates initial iterations to construct a reliable Hessian approximation. 
In contrast, classic BFGS gains an initial advantage because it updates along the Newton direction. 
Greedy BFGS only outperforms classic BFGS if it has enough time to 
build an accurate approximation before convergence.
Sharpened BFGS proposed by \cite{jin2022sharpened} incorporated both classic and greedy BFGS updates, to achieve the best of both updates: an explicit superlinear convergence of $\x^t$, a linear convergence of $\B^t$ and no initial ``ramp-up" phase to build the approximation.

\subsection{Incremental Quasi-Newton (IQN)}\label{iqn_section} 
We now introduce IQN by \cite{mokhtari2018iqn}.
For each iteration $t \ge 1$, IQN maintains tuples of the form 
$(\z_i^t, \nabla f_i(\z_i^t), \B_i^t)$ for each index $i \in [n]$. 
Here $\z_i^t \in \Rn^d$ is the iterate corresponding to the function $f_i$,  $\nabla f_i(\z_i^t)$ is the gradient of $f_i$ at $\z_{i}^t$, and $\B_i^t$ is the positive definite Hessian approximation of $\nabla^2 f_i(\z_i^t)$. 

IQN begins by constructing a second-order Taylor approximation $g_i^{t}$ of each $f_i$, centered at $\z_i^{t-1}$ and using the Hessian approximation $\B_i^{t-1}$ as,
\begin{align}
	g_i^{t}(\x) 
	&:= 
	f_i(\z_i^{t-1}) + 
	\ip{\nabla {f_i(\z_i^{t-1})}}{\x-\z_i^{t-1}} + 
	\frac{1}{2} \ip{\x-\z_i^{t-1}}{\B_i^{t-1} (\x-\z_i^{t-1})}.
\end{align}
The iterate $\x ^ {t}$ is then calculated as,
\begin{align}
	\x ^ {t} &= \argmin_\x \frac{1}{n} \sum_{i=1}^{n}g_i^{t}(\x) = (\Bar{\B}^{t-1})^ {-1}\bigg(\sum_{i = 1} ^ {n}\B_{i} ^ {t - 1}\z_{i} ^ {t - 1} - \nabla f_{i}(\z_{i} ^ {t - 1})\bigg),\label{x_update_iqn}
\end{align}
where $\Bar{\B}^{t-1}=\sum_{i = 1} ^ {n} \B_{i} ^ {t - 1}$.
In every iteration $t$, IQN only updates the tuple whose index is given by $i_t = 1 + (t-1)\bmod n$, using the following scheme:
\begin{enumerate}
    \item $\label{b_update_temp_1}\z_{i_t}^t = \x^t$,  $\nabla f_{i_t}(\z_{i_t}^t) = \nabla f_{i_t}(\x ^ t)$.
    \item 
$\label{b_update_temp_3}\z_j^t = \z_j^{t-1}$, $\nabla f_{j}(\z_{j} ^ t)  = \nabla f_{j}(\z_{j} ^ {t - 1})$, and $\B_j^t = \B_j^{t-1}$, for all $j\neq i_t$.
    \item 
\(\label{b_update_i_T}
\B_{i_t} ^ {t} = 
\bfgs{\B_{i_t} ^ {t - 1}, 
	\K ^ t, 
	\z_{i_t} ^ t - \z_{i_t} ^ {t - 1}},
\) where \(\K ^ t = \int_{0} ^ {1} \nabla ^ 2 f_{i_t}(\z_{i_t} ^ {t - 1} + \tau (\z_{i_t} ^ t - \z_{i_t} ^ {t - 1})) d\tau\).
\end{enumerate}

The per-iteration complexity of IQN: 
the cost of gradient evaluation in \eqref{b_update_temp_1} is $\mathcal{O}(d)$,
the no-operation step in \eqref{b_update_temp_3} incurs no cost, 
and the BFGS step \eqref{b_update_i_T} has a $\mathcal{O}(d^2)$ cost as it is a constant rank update to $\bB_{i_t}^t$.
To compute the iterate $\x^{t+1}$, we first evaluate the inverse of \(\bB^t = \bB^{t-1} + \B_{i_t}^t - \B_{i_t}^{t-1}\) from $(\bar{\B}^{t-1})^{-1}$ using the Sherman-Morrison formula at a cost of $\O(d^2)$.
Then, we calculate  $\sum_{i = 1} ^ {n}\B_{i} ^ {t}\z_{i} ^ {t} - \nabla f_{i}(\z_{i} ^ {t})$ from the memoized value of
$\sum_{i = 1} ^ {n}\B_{i} ^ {t - 1}\z_{i} ^ {t - 1} - \nabla f_{i}(\z_{i} ^ {t - 1})$ in $\O(d^2)$ cost. 
Therefore, the overall per-iteration cost of IQN is $\mathcal{O}(d^2)$.
Please refer to Appendix \ref{app:iqn_efficient_implementation} for details.

\section{PROPOSED ALGORITHM}\label{sec:HybridIQN_algorithm}
We first introduce the SIQN method, which incorporates the sharpened updates into the IQN framework.
However, this direct incorporation results in an inefficient $\O(d^3)$ method.
This is because, ensuring the positive semi-definiteness of the Hessian approximation with respect to the true Hessian results in Hessian updates which are not low rank. 
And consequently, the Hessian inversion incurs a large $\mathcal{O}(d^3)$ cost. 
We then propose the SLIQN algorithm that overcomes this problem by modifying the SIQN updates using a constant corrective multiplicative factor based on the theoretical analysis of SIQN. 
It then utilizes a novel lazy propagation strategy to implement this factor correction efficiently with a per-iteration cost of $\mathcal{O}(d^2)$. 

\subsection{Sharpened Incremental Quasi-Newton (SIQN)}\label{section_siqn}
Similar to IQN, SIQN also maintains tuples of the form
$(\z_i^t, \nabla f_i(\z_i^t), \B_i^t)$ for each iteration $t\geq 1$ and each index $i \in [n]$. 
The iterate $\x^t$ is calculated as,
\begin{align}\label{x_update}
	\x ^ {t} = (\Bar{\B}^{t-1})^ {-1}
	\bigg(
	\sum_{i = 1} ^ {n}\B_{i} ^ {t - 1}\z_{i} ^ {t - 1} - \nabla f_{i}(\z_{i} ^ {t - 1})
	\bigg),
\end{align}
where $\Bar{\B}^{t-1}=\sum_{i = 1} ^ {n} \B_{i} ^ {t - 1}$.
The tuples are updated in a deterministic cyclic order as follows:
\begin{enumerate}
    \item $\label{sliqn_update_temp_1}\z_{i_t}^t = \x^t$,  $\nabla f_{i_t}(\z_{i_t}^t) = \nabla f_{i_t}(\x ^ t)$.
    \item 
$\label{sliqn_update_temp_3}\z_j^t = \z_j^{t-1}$, $\nabla f_{j}(\z_{j} ^ t)  = \nabla f_{j}(\z_{j} ^ {t - 1})$, and $\B_j^t = \B_j^{t-1}$, for all $j\neq i_t$.
\end{enumerate}
To update $\B_{i_t}^t$, SIQN first performs the classic BFGS update followed by the greedy update,
\begin{enumerate}
\setcounter{enumi}{2}
	\item ${\Q ^ t} = 
	\bfgs{
		(1 + \beta_t) ^ 2\B_{i_t} ^ {t - 1},
		(1 + \beta_t)\K ^ {t}, 
		\z_{i_t} ^ {t} - \z_{i_t} ^ {t - 1}
	}$, \nonumber \\
        \\
	$\B_{i_t} ^ {t} = 
	\bfgs{
		{\Q} ^ t, 
		\nabla ^ 2 f_{i_t}(\z_{i_t} ^ t), 
		\bar{\u}(\Q^t,
		\nabla ^ 2 f_{i_t}(\z_{i_t} ^ t))}, 
    $
    \label{modified_second_step_preprint}
\end{enumerate} 
where, 
\(\K ^ {t} \coloneqq \int_{0}^{1} \nabla ^ 2 f_{i_t}(\z_{i_t} ^ {t - 1} + \tau (\z_{i_t} ^ t - \z_{i_t} ^ {t - 1})) d\tau\)
and the scaling factor
$\beta_t \coloneqq \tfrac{M}{2}\norm{\z_{i_t}^t-\z_{i_t}^{t-1}}_{\z_{i_t}^{t-1}}$ 
ensures that before the classical BFGS update,
the Hessian approximation is positive semi-definite with respect to the corresponding Hessian,
\((1 + \beta_t) ^ 2 {\B}_{i_t} ^ {t - 1} \succeq (1 + \beta_t) \K ^ t\). For technical details, please refer to Lemma \ref{lem:formalize_psdness}.
The pseudo-code for SIQN is provided in Algorithm \ref{alg:h_iqn}.

\begin{algorithm}[t]
	\caption {Sharpened Incremental Quasi-Newton}
	\begin{algorithmic}[1] \label{alg:h_iqn}
		\STATE\textbf{Initialize: \(\{\z_{i}^{0} = \x^{0}\}_{i = 1} ^ {n}, \{\B_{i} ^ {0}\}_{i = 1} ^ {n}\)} such that for all $i \in [n]$,
  \(\B_{i} ^ 0 \succeq \nabla ^ 2 f_{i}(\z_{i}) ^ 0\)
		\STATE\textbf{while} \textit{not converged:}
		\STATE\hspace{3mm} Set current index \(i_{t} = (t - 1) \mod n + 1\);
		\STATE\hspace{3mm} Update \(\x^t\) as per \eqref{x_update}; 
		\STATE\hspace{3mm} Update \(\z_{i_t} ^ t = \x ^ t\) and \(\B_{i_t} ^ t\) as per \eqref{modified_second_step_preprint};
		\STATE\hspace{3mm} Update the tuples with index \(j \neq i_t\) as 	\(\z_j^{t} = \z_j^{t-1}, 
		\B_j^{t} = \B_j^{t-1}\);
		\STATE\hspace{3mm} Increment the iteration counter \(t\);
		\STATE\textbf{end while}
	\end{algorithmic}
	\label{alg:hiqn}
\end{algorithm} 
We now consider the per-iteration complexity of SIQN: 
the cost of gradient evaluation is $\mathcal{O}(d)$, 
the no-operation step incurs no cost
and the BFGS step can be computed in $\mathcal{O}(d^2)$ cost.
To compute $\x^{t+1}$,
we need to calculate the inverse of \(\bB^t = \bB^{t-1} + \B_{i_t}^t - \B_{i_t}^{t-1}\),
\begin{align}
	\B_{i_t} ^ t - \B_{i_t} ^ {t - 1} 
	&= \bfgs{\Q ^ {t}, \nabla ^ 2 f_{i_t}(\z_{i_t} ^ t), \bar{\u}(\Q ^ {t}, \nabla ^ 2 f_{i_t}(\z_{i_t} ^ t))} - \B_{i_t} ^ {t - 1}, \nn \\
	&=
	\Q^ t - \frac{\Q ^ t\bar{\u} ^ t (\bar{\u} ^ t) ^ {\T}\Q ^ t}{\ip{\bar{\u} ^ t}{\Q ^ t\bar{\u} ^ t}} + \frac{
		\nabla ^ 2 f_{i_t}(\z_{i_t} ^ t)\bar{\u} ^ {t}(\bar{\u} ^ {t}) ^ {\T}\nabla ^ 2 f_{i_t}(\z_{i_t} ^ t)}
	{
		\ip{\bar{\u} ^ {t}}{\nabla ^ 2 f_{i_t}(\z_{i_t} ^ t)\bar{\u} ^ {t}}
	} 
	- \B_{i_t} ^ {t - 1}, \nn \\
	&=
	((1 + \beta_{t}) ^ 2 -1)
	\B_{i_t} ^ {t - 1} - (1+\beta_t) ^ 2\frac{\B_{i_t} ^ {t - 1}\s_{i_t} ^ t{\s_{i_t} ^ t} ^ {\T}\B_{i_t} ^ {t - 1}}{\ip{\s_{i_t} ^ t}{\B_{i_t} ^ {t - 1}\s_{i_t} ^ t}}\nn \\
	& \hspace{5mm}+ (1 + \beta_t)\frac{\y_{i_t} ^ t{\y_{i_t} ^ t} ^ {\T}}{\ip{{\y_{i_t} ^ t} ^ \T}{\s_{i_t} ^ t}} - \frac{\Q ^ t\bar{\u} ^ t (\bar{\u} ^ t) ^ {\T}\Q ^ t}{\ip{\bar{\u} ^ t}{\Q ^ t\bar{\u} ^ t}} +\frac{\nabla ^ 2 f_{i_t}(\z_{i_t} ^ t)\bar{\u} ^ {t}(\bar{\u} ^ {t}) ^ {\T}\nabla ^ 2 f_{i_t}(\z_{i_t} ^ t)}{\ip{\bar{\u} ^ {t}}{\nabla ^ 2 f_{i_t}(\z_{i_t} ^ t)\bar{\u} ^ {t}}}
	\label{observation},
\end{align}
where \(\bar{\u} ^ {t} = \bar{\u}(\Q ^ {t}, \nabla ^ 2 f_{i_t}(\z_{i_t} ^ t))\), \(\s_{i_t} ^ t = \z_{i_t} ^ t -\z_{i_t} ^ {t - 1}\) and \(\y_{i_t} ^ t = \nabla f_{i_t}(\z_{i_t} ^ t)  - \nabla f_{i_t}(\z_{i_t} ^ {t - 1})\). 
Observe that the expression \eqref{observation} is generally not a matrix of constant rank, since the rank of \(((1 + \beta_t) ^ 2 - 1) \B_{i_t} ^ {t - 1}\) may be as large as $d$. 
Therefore, it is not possible to compute the inverse of $\bB^t$ by using the inverse of $\bB^{t-1}$ in $\mathcal{O}(d^2)$ cost. 


\subsection{Sharpened Lazy Incremental Quasi-Newton (SLIQN)} \label{subsec:efficient_hiqn}
We now present the SLIQN method as a solution to the aforementioned issues. 
We observe that the primary reason for the update of $\Bar{\B}^{t}$ in SIQN not being low-rank, is the presence of the scaling factor $\beta_t$.
To resolve this issue, we begin by noting that the convergence analysis of SIQN (ref.~Appendix \ref{app:convergence_iqn}) indicates that there exists a factor $\alpha_{\lceil t/n - 1\rceil}$, such that $\beta_t \leq \alpha_{\lceil t/n - 1\rceil}$ and using $\alpha_{\lceil t/n - 1\rceil}$ instead of $\beta_t$ preserves the convergence properties of SIQN.
Since $\alpha_{\lceil t/n - 1\rceil}$ is constant throughout an epoch,
rather than multiplying $\alpha_{\lceil t/n - 1\rceil}$ to $\B_{i_t}^{t-1}$ at iteration $t$, 
we instead pre-multiply $\alpha_{\lceil t/n - 1\rceil}$ to each Hessian approximation at the start of every epoch. 
This enables us to compute the inverse of $\Bar{\B}^{t}$ trivially by dividing the old inverse by $\alpha_{\lceil t/n - 1\rceil}$.
However, this pre-multiplication step is a $\mathcal{O}(nd^2)$ operation and it undermines the utility of incremental algorithms.
To address this issue, we employ a lazy propagation strategy, wherein we scale the individual Hessian approximations just before they are updated in their respective iterations, but treat all memoized quantities as if the approximations are already scaled. 
These key changes enable SLIQN to achieve an $\mathcal{O}(d^2)$ per-iteration cost along with an explicit superlinear rate.

In what follows, we will denote the Hessian approximations by $\{\D_i^{t}\}_{i=1}^n$ instead of \(\{\B_i ^ t\}_{i = 1} ^ {n}\) to distinguish SLIQN updates from SIQN updates. We now formally present the SLIQN algorithm.

\textbf{Initialization:} At $t=0$, we initialize the iterates \(\{\z_i ^ {0}\}_{i = 1} ^ {n}\) as \(\z_i ^ {0} = \x ^ 0\) 
and their corresponding hessian approximations $\{\D_{i}^0\}_{i=1}^n$ as 
\(\D_{i} ^ 0 =  (1 + \alpha_0) ^ 2\I_i ^ 0\), where \(\x_0, \alpha_0, \{\I_i ^ 0\}_{i = 1} ^ n\) satisfy the the premise of Lemma \ref{lem:convergence_efficient_h_iqn}.

\textbf{Iterative Updates:}
For each iteration \(t \geq 1\), 
we set the iterate $\x^t$ as,
\begin{align}\label{x_new_update}
	\x ^ t = \left(\Bar{\D}^{t-1}\right)^ {-1}
    \bigg(\sum_{i = 1} ^ {n} \D_{i} ^ {t - 1}\z_{i} ^ {t - 1} - \nabla f_{i}(\z_{i} ^ {t - 1})\bigg),
\end{align} 
where $\Bar{\D}^{t-1} =  \sum_{i = 1} ^ {n} \D_{i} ^ {t - 1}$.
The scheme to update each tuple is as follows:
\begin{enumerate}
    \item $\z_{i_t}^t = \x^t$,  $\nabla f_{i_t}(\z_{i_t}^t) = \nabla f_{i_t}(\x ^ t)$.
    \item $\label{gamma_i_neq_i_t}
	\z_i ^ t = \z_i ^ {t - 1}, \D_i ^ t = \omega_t \D_i ^ {t - 1}$, $\forall i \in [n]\backslash \{i_t\}$, where \(\omega_t := (1 + \alpha_{\ceil{t / n}}) ^ 2\) if \(t \hspace{-4pt} \mod n = 0\) and \(1\) otherwise.
    \item 
    $\Q ^ {t} = \bfgs{\D_{i_t} ^ {t - 1}, (1 + \alpha_{\ceil{t / n - 1}}) \K ^ t, \z_{i_t} ^ t - \z_{i_t} ^ {t - 1}} \label{eff_first_modified_update_}$,
    \\
    \\
    $\D_{i_t} ^ t = \omega_t\bfgs{\Q ^ {t}, \nabla ^ 2 f_{i_t}(\z_{i_t} ^ t), \bar{\u} ^ t (\Q^t, \nabla ^ 2 f_{i_t}(\z_{i_t} ^ t))} \label{eff_second_modified_update}$.
\end{enumerate}

Algorithm \ref{alg:eff_h_iqn} provides the pseudo-code for SLIQN. 
We now consider its per-iteration complexity. 
Observe that in the $j+1^{\text{th}}$ epoch, the updates \eqref{gamma_i_neq_i_t} and \eqref{eff_second_modified_update}  are carried out differently for \(t \hspace{-4pt} \mod n = 0\) and \(t \hspace{-4pt} \mod n \neq 0\). 
Specifically, for \(t \in \{ nj+1, \dots, nj + n - 1 \}\), we can carry out the update
\eqref{x_new_update} in \(\cO(d ^ 2)\) cost using Sherman-Morrison formula (ref.~Appendix \ref{update_efficiently})
and can compute other iterative updates in $\mathcal{O}(d^2)$ cost as they consist of a constant number of matrix-vector multiplications.
However, for \(t = nj + n\), each \(\D_i ^ t\) is multiplied with the scaling factor of \((1 + \alpha_{j + 1}) ^ 2\), which incurs a large \(\cO(nd^2)\) overhead. 
Instead, we implement this step by lazily scaling the Hessian approximations at the iteration in which they are updated while treating all memoized quantities as if the approximations are already scaled. The details of the lazy strategy are provided in Appendix \ref{subsec:lazy_algorithm}.

\begin{algorithm}
	\caption{Sharpened Lazy IQN}
	\begin{algorithmic}[1]
		\STATE\textbf{Initialization:} Initialize the iterates \(\{\z_i ^ {0}\}_{i = 1} ^ {n}\) as \(\z_i ^ {0} = \x ^ 0\) 
and their corresponding hessian approximations $\{\D_{i}^0\}_{i=1}^n$ as 
\(\D_{i} ^ 0 =  (1 + \alpha_0) ^ 2\I_i ^ 0\)
		\STATE\textbf{while} \textit{not converged:}
		\STATE\hspace{3mm}Update \(\x^t\) as per \eqref{x_new_update};
		\STATE\hspace{3mm}Update \(\z_{i_t} ^ t\) as \(\z_{i_t} ^ t = \x ^ t\);
		\STATE\hspace{3mm}Update \(\Q ^ t\) and \(\D_{i_t} ^ t\) as per \eqref{eff_second_modified_update};
		\STATE\hspace{3mm}Update the tuples with index \(i \neq i_t\) as per \eqref{gamma_i_neq_i_t};
		\STATE\hspace{3mm}Increment the iteration counter \(t\);
		\STATE\textbf{end while}
	\end{algorithmic}
	\label{alg:eff_h_iqn}
\end{algorithm}

\begin{rem}
    We remark that both IQN and SLIQN exhibit a per-iteration cost of $\mathcal{O}(d^2)$ which is in contrast to the $\mathcal{O}(nd^2)$ cost for QN methods. However, this efficiency comes with an increased memory cost of $\mathcal{O}(nd^2)$.
    To address this issue we propose a pipelining scheme in Appendix \ref{app:low_memory_implementation}, in which we leverage a much larger disk to augment the main memory by prefetching the data and processing the updates in parallel.
\end{rem}

\section{THEORETICAL ANALYSIS OF ALGORITHM \ref{alg:eff_h_iqn}}\label{convergence}
\subsection{Assumptions}
We analyze SLIQN under the assumptions of smoothness, strong convexity, and Lipschitz continuity of the Hessian. These assumptions are commonly used in the analysis of Quasi-Newton methods.
\begin{assumption}\label{ass:smooth_strong}
	(\textbf{Strong convexity and smoothness}) 
    The functions $\{f_i\}_{i=1}^n$ are $\mu$-strongly convex and $L$-smooth, 
    that is
    $
         \frac{\mu}{2} \norm{\y - \x} ^ {2} 
         \leq f_i(\y) - f_i(\x) - \ip{\nabla f_i(\x)}{\y - \x} 
    $, 
    and 
    $
        f_i(\y) - f_i(\x) - \ip{\nabla f_i(\x)}{\y - \x} 
        \leq \frac{L}{2} \norm{\y - \x} ^ {2}
    $ 
    hold for all $\x,\y\in\Rn^d$, and for all $i\in[n]$. 
\end{assumption}
\begin{assumption}\label{ass:lip_hessian}
	(\textbf{Lipschitz continuous Hessian}) The Hessians $\{\nabla^2f_i\}_{i = 1} ^ {n}$ are  \(\tilde{L}\)-Lipschitz continuous, that is, 
 $\norm{\nabla ^ 2 f_{i}(\y) - \nabla ^ 2 f_{i}(\x)} \leq \tilde{L} \norm{\y - \x}$, for all $\x,\y\in\Rn^d$, and for all $i\in[n]$. 
\end{assumption}
The above assumptions also imply that the functions $\{f_i\}_{i=1}^n$ are $M$-strongly self-concordant, which is defined as,  
\(
\nabla ^ 2 f_i(\y) - \nabla ^ 2 f_i(\x) \preceq M \norm{\y - \x}_\z \nabla ^ 2 f_i(\w) \), $\forall \x, \y, \z, \w\in\Rn^d$ with \(M \coloneqq \tilde{L}\mu ^ {-\frac{3}{2}}\)
\cite[Ex 4.1]{rodomanov2021greedy}.

\subsection{Convergence Lemmas}
We establish the convergence guarantees in three steps: 
Lemma \ref{lem:distance_change_single_iter_ver_II} establishes a one-step inequality that  bounds the residual \(\norm{\x^{t} - \x^{\star}}\) in terms of the previous residuals \(\norm{\z_{i} ^ {t - 1} - \x^\star}\) and the norm error in the Hessian approximation \(\norm{\D_{i} ^ {t - 1} - \nabla ^ 2 f_{i}(\z_{i} ^ {t - 1})}\), $\forall i \in [n]$. 
Lemma \ref{lem:convergence_efficient_h_iqn} 
uses the result of Lemma \ref{lem:distance_change_single_iter_ver_II} to inductively show that both the residual $\norm{\x^t-\x^\star}$ and the Hessian approximation error in the $\sigma$ sense, i.e., \(\sigma(\D_{i_t} ^ {t}, \nabla ^ 2 f_{i_t}(\z_{i_t} ^ t))\),  decrease linearly with $\ceil{t/n}$. 
Using the result of Lemma \ref{lem:distance_change_single_iter_ver_II} and Lemma \ref{lem:convergence_efficient_h_iqn}, Lemma \ref{lem:mean_linear_verII} establishes a mean-superlinear convergence result. 
We finally show in Theorem \ref{lem:final_rate} that the residuals can be upper bounded by a superlinearly convergent sequence.

\begin{lem}\label{lem:distance_change_single_iter_ver_II}
	If Assumptions \ref{ass:smooth_strong} and \ref{ass:lip_hessian} hold, the sequence of iterates generated by Algorithm \ref{alg:eff_h_iqn} satisfy 
	\begin{align}\label{lem1eq_preprint}
		\norm{\x^{t} - \x^\star} &\leq {\frac{\tilde{L}\Gamma ^ {t - 1}}{2}} \sum_{i = 1} ^ {n} \norm{\z_{i} ^ {t - 1} - \x^\star} ^ 2 
		+ {\Gamma ^ {t - 1}} \sum_{i = 1} ^ {n} &\norm{\D_i ^ {t - 1} - \nabla ^ 2f_i(\z_{i} ^ {t - 1})}\norm{\z_{i}^{t - 1} - \x^\star},
	\end{align} 
	for all \(t \geq 1\), where \(\Gamma ^ {t} := \norm{\big(\sum_{i = 1} ^ {n} \D_{i} ^ {t}\big)^{-1}}\).
\end{lem}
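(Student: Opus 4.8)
The plan is to mimic the one-step analysis of IQN (\cite{mokhtari2018iqn}) but carry the Hessian-approximation error explicitly rather than discarding it asymptotically. First I would start from the update rule \eqref{x_new_update}: since $\x^t = (\bar\D^{t-1})^{-1}\big(\sum_{i=1}^n \D_i^{t-1}\z_i^{t-1} - \nabla f_i(\z_i^{t-1})\big)$, multiply through by $\bar\D^{t-1} = \sum_i \D_i^{t-1}$ to get $\bar\D^{t-1}(\x^t - \x^\star) = \sum_{i=1}^n \big[\D_i^{t-1}(\z_i^{t-1} - \x^\star) - \nabla f_i(\z_i^{t-1})\big]$. Because $\x^\star$ is the minimizer of $\tfrac1n\sum_i f_i$, we have $\sum_i \nabla f_i(\x^\star) = \zero$, so I can subtract this for free and write the right-hand side as $\sum_{i=1}^n \big[\D_i^{t-1}(\z_i^{t-1} - \x^\star) - (\nabla f_i(\z_i^{t-1}) - \nabla f_i(\x^\star))\big]$.

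Next I would use the fundamental theorem of calculus to write $\nabla f_i(\z_i^{t-1}) - \nabla f_i(\x^\star) = \G_i^{t-1}(\z_i^{t-1} - \x^\star)$ where $\G_i^{t-1} := \int_0^1 \nabla^2 f_i(\x^\star + \tau(\z_i^{t-1} - \x^\star))\,d\tau$. Substituting, the $i$-th summand becomes $\big(\D_i^{t-1} - \G_i^{t-1}\big)(\z_i^{t-1} - \x^\star)$, and I would further split $\D_i^{t-1} - \G_i^{t-1} = \big(\D_i^{t-1} - \nabla^2 f_i(\z_i^{t-1})\big) + \big(\nabla^2 f_i(\z_i^{t-1}) - \G_i^{t-1}\big)$. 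The first bracket is exactly the norm error appearing in the claimed bound. For the second bracket, Assumption \ref{ass:lip_hessian} (Hessian Lipschitzness) gives $\norm{\nabla^2 f_i(\z_i^{t-1}) - \nabla^2 f_i(\x^\star + \tau(\z_i^{t-1}-\x^\star))} \le \tilde L (1-\tau)\norm{\z_i^{t-1} - \x^\star}$, so integrating over $\tau\in[0,1]$ yields $\norm{\nabla^2 f_i(\z_i^{t-1}) - \G_i^{t-1}} \le \tfrac{\tilde L}{2}\norm{\z_i^{t-1} - \x^\star}$.

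Finally I would take norms on both sides: $\norm{\x^t - \x^\star} \le \norm{(\bar\D^{t-1})^{-1}} \sum_{i=1}^n \norm{\D_i^{t-1} - \G_i^{t-1}}\,\norm{\z_i^{t-1} - \x^\star}$, apply the triangle inequality inside the sum with the two brackets above, identify $\Gamma^{t-1} = \norm{(\bar\D^{t-1})^{-1}}$, and collect terms to obtain exactly \eqref{lem1eq_preprint}. The only genuinely delicate point is the $\tfrac{\tilde L}{2}$ constant in the Hessian-averaging term — one must be careful that the weight is $(1-\tau)$, not $1$, when bounding the distance from $\x^\star+\tau(\z_i^{t-1}-\x^\star)$ to $\z_i^{t-1}$, which is what produces the factor $\tfrac12$ rather than $1$; everything else is triangle inequality and operator-norm bookkeeping. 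Note that positive definiteness of $\bar\D^{t-1}$ (hence invertibility, so that $\Gamma^{t-1}$ is finite) is guaranteed because each $\D_i^{t-1} \succ \zero$ is maintained by the BFGS updates starting from positive definite initializations.
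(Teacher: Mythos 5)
Your proposal is correct and follows essentially the same route as the paper's proof: both multiply through by $\bar\D^{t-1}$, subtract $\sum_i \nabla f_i(\x^\star)=\zero$, invoke the fundamental theorem of calculus to replace the gradient difference by an averaged Hessian, add and subtract $\nabla^2 f_i(\z_i^{t-1})$, and bound the averaged-Hessian discrepancy by $\tfrac{\tilde L}{2}\norm{\z_i^{t-1}-\x^\star}$ using the $(1-\tau)$ weight from Assumption \ref{ass:lip_hessian}. Your remark on the positive definiteness of $\bar\D^{t-1}$ (hence finiteness of $\Gamma^{t-1}$) is a valid additional observation that the paper handles elsewhere.
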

The proof of this result can be found in Appendix \ref{app:distance_change_single_iter_ver_II}. It is important to note that our bound in \eqref{lem1eq_preprint} differs from a similar result presented in \cite[Lemma 2]{mokhtari2018iqn}, where they utilize \(\norm{\D_{i} ^ {t - 1} - \nabla ^ 2 f_{i}(\x ^ {\star})}\) instead of \(\norm{\D_{i} ^ {t - 1} - \nabla ^ 2 f_{i}(\z_{i} ^ {t - 1})}\). 
This modification helps connect the approximation error in the norm sense, i.e., \(\norm{\D_{i} ^ {t - 1} - \nabla ^ 2 f_{i}(\z_{i} ^ {t - 1})}\) with the error in the $\sigma$ sense, i.e., \(\sigma(\D_{i} ^ {t - 1}, \nabla ^ 2 f_{i}(\z_{i} ^ {t - 1}))\). This connection is crucial for quantifying the improvements achieved by the greedy updates.

\begin{lem}\label{lem:convergence_efficient_h_iqn}
	If Assumptions \ref{ass:smooth_strong} and \ref{ass:lip_hessian} hold, for any \(\rho\) such that \(0 < \rho < 1 - \frac{\mu}{dL}\), there exist positive constants \(\epsilon\) and \(\sigma_0\) such that if \(\norm{\x ^ {0} - \x^{\star}} \leq \epsilon\) and \(\sigma(\I_i ^ {0}, \nabla ^ 2 f_i(\x ^ {0})) \leq \sigma_0\) for all \(i \in [n]\), the sequence of iterates generated by Algorithm \ref{alg:eff_h_iqn} satisfy \begin{align}\label{contraction_verII}
		\norm{\x ^ {t} - \x^\star} \leq \rho ^ {\ceil{\frac{t}{n}}} \norm{\x ^ {0} - \x^\star}.
	\end{align}
	Further, it holds that\begin{align}\label{sigma_rel_verII}
		\sigma(\omega_t ^ {-1}\D_{i_t}^{t}, \nabla ^ 2 f_{i_t}(\z_{i_t}^t)) \leq \big(1 - \frac{\mu}{dL}\big) ^ {\ceil{\frac{t}{n}}} \delta,
	\end{align}
	where \(\delta :=  e ^ {\frac{4M\sqrt{L}\epsilon}{1 - \rho}} \big(\sigma_0 + \epsilon \frac{4Md\sqrt{L}}{1 - \frac{\rho}{1 - \frac{\mu}{dL}}}\big), M = \tilde{L} / {\mu ^ {\frac{3}{2}}}\), $\omega_t = (1 + \alpha_{\ceil{t / n}}) ^ 2$ if \(t\) is a multiple of \(n\) and \(1\) otherwise, and the sequence \(\{\alpha_k\}\) is defined as \({\alpha_k := M\sqrt{L}\epsilon\rho ^ k}, \forall k \ge 0\).
\end{lem}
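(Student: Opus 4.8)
I would prove both displays by a single strong induction on $t$, organized by epochs and indexed by $k:=\ceil{t/n}$, carrying three hypotheses for every past iterate: (i) the residual bound $\norm{\x^s-\x^\star}\le\rho^{\ceil{s/n}}\norm{\x^0-\x^\star}$; (ii) the error bound $\sigma(\omega_s^{-1}\D_{i_s}^s,\nabla^2 f_{i_s}(\z_{i_s}^s))\le(1-\tfrac{\mu}{dL})^{\ceil{s/n}}\delta$; and (iii) an auxiliary positive-definiteness invariant $\omega_s^{-1}\D_i^s\succeq\nabla^2 f_i(\z_i^s)$ for all $i$, which together with the end-of-epoch rescaling by $(1+\alpha_k)^2$ furnishes the ``cushion'' $\D_{i_t}^{t-1}\succeq(1+\alpha_{\ceil{t/n-1}})^2\nabla^2 f_{i_t}(\z_{i_t}^{t-1})$ needed below. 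The base case is supplied by the initialization $\norm{\x^0-\x^\star}\le\epsilon$, $\sigma(\I_i^0,\nabla^2 f_i(\x^0))\le\sigma_0$, $\D_i^0=(1+\alpha_0)^2\I_i^0$; the smallness requirements on $\epsilon,\sigma_0$ generated along the way are collected at the end, which is what ``there exist $\epsilon,\sigma_0$'' refers to. The key quantitative fact is that $\{\alpha_k\}$ is geometric, $\sum_{k\ge0}\alpha_k=\tfrac{M\sqrt L\epsilon}{1-\rho}$, so products $\prod_k(1+\alpha_k)^2$ are bounded by $e^{4M\sqrt L\epsilon/(1-\rho)}$, exactly the prefactor in $\delta$.

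\paragraph{Step 1: propagating the positive-definiteness invariant.}
Hypothesis (i) bounds $\norm{\s_{i_t}^t}=\norm{\x^t-\z_{i_t}^{t-1}}\le 2\rho^{\,k-1}\epsilon$, whence $\beta_t:=\tfrac M2\norm{\s_{i_t}^t}_{\z_{i_t}^{t-1}}\le M\sqrt L\rho^{\,k-1}\epsilon=\alpha_{k-1}=\alpha_{\ceil{t/n-1}}$. Strong self-concordance then gives $\K^t\preceq(1+\beta_t)\nabla^2 f_{i_t}(\z_{i_t}^{t-1})\preceq(1+\alpha_{k-1})\nabla^2 f_{i_t}(\z_{i_t}^{t-1})$, which with the cushion yields $\D_{i_t}^{t-1}\succeq(1+\alpha_{k-1})\K^t$. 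This is exactly the premise under which the classic BFGS step is monotone (Lemma \ref{lem:formalize_psdness}), so $\Q^t\succeq(1+\alpha_{k-1})\K^t$, and one further self-concordance estimate (changing the reference point from $\z_{i_t}^{t-1}$ to $\z_{i_t}^t$) gives $\Q^t\succeq\nabla^2 f_{i_t}(\z_{i_t}^t)$; the greedy BFGS update and multiplication by $\omega_t\ge1$ both preserve $\succeq$, so (iii) propagates. In particular every $\D_i^{t-1}\succeq\nabla^2 f_i(\z_i^{t-1})\succeq\mu\I$, hence $\Gamma^{t-1}=\norm{\big(\sum_i\D_i^{t-1}\big)^{-1}}\le\tfrac1{n\mu}$.

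\paragraph{Step 2: contraction of the Hessian error.}
By the greedy-BFGS contraction of \cite{rodomanov2021greedy}, valid since $\Q^t\succeq\nabla^2 f_{i_t}(\z_{i_t}^t)$, we have $\sigma(\bfgs{\Q^t,\nabla^2 f_{i_t}(\z_{i_t}^t),\ub^t},\nabla^2 f_{i_t}(\z_{i_t}^t))\le(1-\tfrac{\mu}{dL})\,\sigma(\Q^t,\nabla^2 f_{i_t}(\z_{i_t}^t))$, and the $\omega_t^{-1}$ in \eqref{sigma_rel_verII} exactly strips the scaling applied in \eqref{eff_second_modified_update}. It remains to bound $\sigma(\Q^t,\nabla^2 f_{i_t}(\z_{i_t}^t))$ by the value from the epoch in which index $i_t$ was last updated. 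The classic BFGS step does not increase the trace error relative to its own target, $\sigma(\Q^t,(1+\alpha_{k-1})\K^t)\le\sigma(\D_{i_t}^{t-1},(1+\alpha_{k-1})\K^t)$; switching the reference matrix — from $(1+\alpha_{k-1})\K^t$ to $\nabla^2 f_{i_t}(\z_{i_t}^{t-1})$, then to $\nabla^2 f_{i_t}(\z_{i_t}^t)$, and absorbing the $(1+\alpha_{k-1})^2$ scaling carried by $\D_{i_t}^{t-1}$ — costs at each switch a multiplicative factor $1+\O(\alpha_{k-1})$ together with an additive term of order $\alpha_{k-1}d$, all via strong self-concordance and the identity $\sigma(c\B,\K)=c\,\sigma(\B,\K)+(c-1)d$. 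Chaining over epochs $1,\dots,k$, the multiplicative factors telescope into $e^{4M\sqrt L\epsilon/(1-\rho)}$, the additive terms sum to a geometric series of ratio $\rho/(1-\tfrac{\mu}{dL})$ and total $\epsilon\cdot\tfrac{4Md\sqrt L}{1-\rho/(1-\mu/(dL))}$, and the single $(1-\tfrac{\mu}{dL})$ factor per epoch is retained; this reproduces \eqref{sigma_rel_verII} with exactly the stated $\delta$. I expect this bookkeeping — tracking the self-concordance correction factors and scaling mismatches across all epochs so that precisely $\delta$ emerges — to be the main obstacle.

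\paragraph{Step 3: closing the residual bound.}
Feed into Lemma \ref{lem:distance_change_single_iter_ver_II} the estimates $\Gamma^{t-1}\le\tfrac1{n\mu}$, $\norm{\z_i^{t-1}-\x^\star}\le\rho^{\,k-1}\norm{\x^0-\x^\star}$, and $\norm{\D_i^{t-1}-\nabla^2 f_i(\z_i^{t-1})}\le L\,\sigma(\omega^{-1}\D_i^{t-1},\nabla^2 f_i(\z_i^{t-1}))+\O(\alpha_{k-1}L)\le CL\delta$, where the bound on $\norm{\D-\nabla^2 f}$ uses $\norm{\B-\K}\le L\,\sigma(\B,\K)$ whenever $\B\succeq\K$ together with (ii), (iii), and $\delta\ge c\epsilon$. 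The $n$ factors cancel and $\norm{\x^t-\x^\star}\le\big(\tfrac{\tilde L}{2\mu}\rho^{\,k-1}\epsilon+\tfrac{CL\delta}{\mu}\big)\rho^{\,k-1}\norm{\x^0-\x^\star}$; since $\delta=\O(\sigma_0+\epsilon)$, taking $\epsilon,\sigma_0$ small enough that the bracket is at most $\rho$ yields $\norm{\x^t-\x^\star}\le\rho^{\,k}\norm{\x^0-\x^\star}$, which is \eqref{contraction_verII} and closes the induction.
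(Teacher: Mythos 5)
Your proposal is correct and follows essentially the same route as the paper's proof: a simultaneous induction carrying the residual bound, the $\sigma$-error bound and the positive-semidefinite cushion, with Lemma \ref{lem:distance_change_single_iter_ver_II} closing the residual step and your Step 2's per-epoch bookkeeping (self-concordance correction factors, the identity $\sigma(c\B,\K)=c\,\sigma(\B,\K)+(c-1)d$, greedy contraction, geometric summation into exactly $\delta$) being precisely the content of Lemma \ref{lem:unit_step_dec}, Corollary \ref{cor:imp} and Lemma \ref{lem:sig} applied to the subsequence of iterations at which a fixed index is updated. The only deviations are minor: you bound $\Gamma^{t-1}$ directly by $1/(n\mu)$ from $\D_i^{t-1}\succeq\mu\I$ where the paper uses a Banach-lemma argument yielding $(1+\rho)/(n\mu)$ (your shortcut is valid and slightly cleaner), and within the induction step the residual bound at time $t$ (your Step 3, which uses only time-$(t-1)$ data) must be established before Steps 1--2, since those steps use $\norm{\x^t-\x^\star}$ to bound $\beta_t$ --- the same ordering as the paper's Stage 2 before Stage 3.
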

The proof of Lemma \ref{lem:convergence_efficient_h_iqn} can be found in Appendix \ref{app:convergence_efficient_h_iqn}.
Under the hood, the proof uses induction to show that if the initialized \(\x ^ {0}\) is close to \(\x ^ {\star}\) and \(\D_i ^ {0}\) is close to \(\nabla ^ 2 f_{i}(\x ^ {0})\), then the iterate \(\x ^ {t}\) converges linearly to \(\x ^ {\star}\) and $\D_{i_t}^{t}$ converges linearly to $\nabla^2f_i(\z_{i_t} ^ t)$.
Our result is stronger than \cite[Lemma 3]{mokhtari2018iqn} as we establish the linear convergence of $\D_i^t$, whereas \cite{mokhtari2018iqn} were only able to establish that 
\(\norm{\D_{i} ^ {t} - \nabla ^ 2 f_{i}(\x ^ {\star})}\) does not grow with $t$. Moreover, \cite{mokhtari2018iqn} did not guarantee convergence of $\D_{i} ^ {t}$. 
Also, there is no equivalent convergence result presented in the analysis of IGS.
\begin{lem}\label{lem:mean_linear_verII}
	If Assumptions \ref{ass:smooth_strong}, \ref{ass:lip_hessian} hold, then there exist positive constants \(\epsilon\) and \(\sigma_0\) such that if \(\norm{\x ^ {0} - \x^{\star}} \leq \epsilon\) and \(\sigma(\I_i ^ {0}, \nabla ^ 2 f_i(\x ^ {0})) \leq \sigma_0\) for all \(i \in [n]\), the sequence of iterates generated by Algorithm \ref{alg:eff_h_iqn} satisfy
	\begin{align}\nn
		\norm{\x^{t} - \x^\star} \leq \big(1 - \frac{\mu}{dL}\big) ^ {\ceil{\frac{t}{n}}} \frac{1}{n}\sum_{i = 1} ^ n \norm{\x^{t - i} - \x^\star}. 
	\end{align}
\end{lem}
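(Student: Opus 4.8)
The plan is to combine the one-step inequality of Lemma \ref{lem:distance_change_single_iter_ver_II} with the two linear-decay estimates established in Lemma \ref{lem:convergence_efficient_h_iqn}: the residual bound \eqref{contraction_verII} and the Hessian-approximation bound \eqref{sigma_rel_verII}. The starting point is \eqref{lem1eq_preprint}, which controls $\norm{\x^t - \x^\star}$ by a quadratic term $\tfrac{\tilde L \Gamma^{t-1}}{2}\sum_i \norm{\z_i^{t-1} - \x^\star}^2$ and a cross term $\Gamma^{t-1}\sum_i \norm{\D_i^{t-1} - \nabla^2 f_i(\z_i^{t-1})}\norm{\z_i^{t-1}-\x^\star}$. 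The first thing I would do is observe that, under the cyclic update rule, $\z_i^{t-1} = \x^{\tau(i)}$ for an appropriate past index $\tau(i) \in \{t-n,\dots,t-1\}$ — each of the $n$ memory slots holds one of the last $n$ iterates — so $\sum_i \norm{\z_i^{t-1}-\x^\star}$ can be rewritten as $\sum_{i=1}^n \norm{\x^{t-i}-\x^\star}$, which is exactly the quantity appearing on the right of the claimed bound.

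Next I would bound $\Gamma^{t-1} = \norm{(\sum_i \D_i^{t-1})^{-1}}$ by a constant of order $\tfrac{1}{n\mu}$ (up to the $(1+\alpha_k)^2$ scaling factors, which are uniformly bounded since $\alpha_k = M\sqrt L\,\epsilon\rho^k \le M\sqrt L\,\epsilon$ for $\epsilon$ small); this uses that each $\D_i^{t-1}$ stays positive definite and bounded below, which is part of the inductive invariant proved in Lemma \ref{lem:convergence_efficient_h_iqn}. For the quadratic term I would apply \eqref{contraction_verII} to each $\norm{\z_i^{t-1}-\x^\star} = \norm{\x^{t-i}-\x^\star}$, pulling out one factor $\norm{\x^{t-i}-\x^\star}$ and bounding the remaining factor $\norm{\x^{t-i}-\x^\star} \le \rho^{\lceil (t-i)/n\rceil}\norm{\x^0-\x^\star} \le \rho^{\lceil t/n\rceil - 1}\epsilon \le C\epsilon$; choosing $\epsilon$ small enough makes $\tfrac{\tilde L \Gamma^{t-1}}{2}\cdot C\epsilon$ absorbable into the target contraction factor $1-\tfrac{\mu}{dL}$ (divided by $n$). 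For the cross term I would similarly pull out $\norm{\x^{t-i}-\x^\star}$ and bound the Hessian error factor: translate the norm error $\norm{\D_i^{t-1}-\nabla^2 f_i(\z_i^{t-1})}$ into the $\sigma$-error via the standard inequality $\norm{\B - \K} \le $ (const)$\cdot \sigma(\B,\K)$ when $\B \succeq \K$, then invoke \eqref{sigma_rel_verII} to get decay like $(1-\tfrac{\mu}{dL})^{\lceil (t-i)/n\rceil}\delta \le (1-\tfrac{\mu}{dL})^{\lceil t/n\rceil - 1}\delta$; again with $\delta$ small (guaranteed by small $\epsilon,\sigma_0$) this factor times $\Gamma^{t-1}$ is at most $\tfrac{1}{n}(1-\tfrac{\mu}{dL})$ times the stray $(1-\tfrac{\mu}{dL})^{-1}$ correction. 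Summing over $i$ and collecting constants yields the stated bound.

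The main obstacle I anticipate is the bookkeeping around the scaling factors $\omega_t$ and the lazy propagation: the matrices $\D_i^{t-1}$ carry accumulated products of $(1+\alpha_k)^2$ factors that differ across slots depending on how recently each was touched, so both $\Gamma^{t-1}$ and the conversion from $\sigma(\omega_t^{-1}\D_{i_t}^t,\cdot)$ in \eqref{sigma_rel_verII} to the unscaled error $\norm{\D_i^{t-1}-\nabla^2 f_i(\z_i^{t-1})}$ require carefully tracking these factors and showing they are bounded in $[1, (1+M\sqrt L\epsilon)^{2n}]$ uniformly — this is where the choice of $\epsilon$ relative to $n$, $d$, $\mu$, $L$, $M$ really matters. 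Once that uniform boundedness is in hand, the rest is a matter of choosing $\epsilon$ and $\sigma_0$ small enough (intersecting with the constraints already imposed in Lemmas \ref{lem:distance_change_single_iter_ver_II} and \ref{lem:convergence_efficient_h_iqn}) that the sum of the two prefactors does not exceed $\tfrac{1}{n}(1-\tfrac{\mu}{dL})$, and the proof closes.
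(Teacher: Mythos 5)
Your proposal is correct and follows essentially the same route as the paper: substitute the linear-decay bounds \eqref{contraction_verII} and \eqref{sigma_rel_verII} from Lemma \ref{lem:convergence_efficient_h_iqn} into the one-step inequality of Lemma \ref{lem:distance_change_single_iter_ver_II}, identify the $\z_i^{t-1}$ with the last $n$ iterates, bound $\Gamma^{t-1}\leq \tfrac{1+\rho}{n\mu}$ via Banach's lemma, convert the $\sigma$-error to a norm error (Lemma \ref{lem:simple_lemma}) while tracking the $(1+\alpha_j)^2$ lazy-scaling factors, and absorb the resulting constant into $\rho<1-\tfrac{\mu}{dL}$ through the smallness condition on $\epsilon,\sigma_0$ (the paper's \eqref{choice_variables_II}). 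The scaling-factor bookkeeping you flag as the main obstacle is exactly what the paper resolves with the bounds \eqref{i=m+1_n-1} and \eqref{i=n}, where only a single $(1+M\sqrt{L}\epsilon\rho^j)^2$ factor (not a compounded product) separates each stored $\D_i$ from its last greedy-updated version.
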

The proof of Lemma \ref{lem:mean_linear_verII} can be found in Appendix \ref{app:mean_linear_verII}. The main idea behind the proof is to substitute the linear convergence results, specifically \eqref{contraction_verII} and \eqref{sigma_rel_verII} from Lemma \ref{lem:convergence_efficient_h_iqn}, back into the result from Lemma \ref{lem:distance_change_single_iter_ver_II}.
By doing so, the first term on the right-hand side of \eqref{lem1eq_preprint} converges quadratically, while the second term converges superlinearly, which proves the result.

Our result is markedly different from \cite[Theorem 6]{mokhtari2018iqn} which proves asymptotic superlinear convergence of IQN. Their proof is based on a variant of the Dennis-Mor\'{e} theorem to show that superlinear convergence kicks asymptotically. 
Since their proof is existential, unlike Lemma \ref{lem:mean_linear_verII}, it cannot be used to derive an explicit rate of superlinear convergence.

\begin{thm}
	\label{lem:final_rate}
	If Assumptions \ref{ass:smooth_strong}, \ref{ass:lip_hessian} hold, then there exist positive constants \(\epsilon\) and \(\sigma_0\) such that if \(\norm{\x ^ {0} - \x^{\star}} \leq \epsilon\) and \(\sigma(\I_i ^ {0}, \nabla ^ 2 f_i(\x ^ {0})) \leq \sigma_0\) for all \(i \in [n]\), and for the sequence of iterates generated by Algorithm \ref{alg:eff_h_iqn}, there exists a sequence 
	\(\{\zeta ^ {k}\}\), \(k \ge 1\) 
	such that \(\norm{\x^t - \x^\star} \leq \zeta ^ {\floor{\frac{t - 1}{n}}}\) for all \(t \ge 1\) and \(\{\zeta ^ {k}\}\) satisfies, \begin{align}\label{rate_iqn}
		\zeta ^ {k} \leq \epsilon \big(1 - \frac{\mu}{dL}\big) ^ {\frac{(k + 2)(k + 1)}{2}}. 
	\end{align}
\end{thm}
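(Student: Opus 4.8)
The plan is to convert the per-epoch contraction of Lemma~\ref{lem:mean_linear_verII} into a recursion on the worst residual of each epoch and then unroll it. Write $q := 1 - \tfrac{\mu}{dL} \in [0,1)$ and fix $\epsilon$ and $\sigma_0$ small enough that the hypotheses of both Lemma~\ref{lem:convergence_efficient_h_iqn} and Lemma~\ref{lem:mean_linear_verII} hold (take the smaller of the two thresholds, with the accompanying $\rho \in (0,q)$). For each epoch index $k \ge 0$ I will track $M_k := \max\{\norm{\x^t - \x^\star} : \floor{(t-1)/n} = k\}$, the largest residual among the $n$ iterates produced during the $(k+1)$-st sweep over the data, and aim to prove $M_k \le \epsilon\, q^{(k+1)(k+2)/2}$; the theorem then follows by setting $\zeta^k := \epsilon\,(1-\tfrac{\mu}{dL})^{(k+1)(k+2)/2}$, since $\norm{\x^t-\x^\star} \le M_{\floor{(t-1)/n}} = \zeta^{\floor{(t-1)/n}}$, the bound~\eqref{rate_iqn} holds by construction, and $\zeta^{k}/\zeta^{k-1} = q^{k+1}\to 0$ gives the superlinearity.

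For the base epoch, $t \in \{1,\dots,n\}$ gives $\ceil{t/n} = 1$, so Lemma~\ref{lem:convergence_efficient_h_iqn} yields $\norm{\x^t - \x^\star} \le \rho\norm{\x^0-\x^\star} \le \rho\epsilon < q\epsilon$, hence $M_0 \le q\epsilon = \epsilon\,q^{(1)(2)/2}$. For the inductive step, fix $k \ge 1$ and any $t$ with $\floor{(t-1)/n} = k$; then $\ceil{t/n} = k+1$ and $t-n \ge n(k-1)+1 \ge 1$, so the $n$ consecutive indices $t-1,t-2,\dots,t-n$ all lie in $\{n(k-1)+1,\dots,n(k+1)-1\}$, i.e. they belong only to epochs $k-1$ and $k$. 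Consequently $\tfrac1n\sum_{i=1}^n\norm{\x^{t-i}-\x^\star} \le \max(M_{k-1},M_k)$, and Lemma~\ref{lem:mean_linear_verII} gives $\norm{\x^t-\x^\star} \le q^{k+1}\max(M_{k-1},M_k)$; maximizing over $t$ in epoch $k$ yields $M_k \le q^{k+1}\max(M_{k-1},M_k)$. Since $q^{k+1}\le q<1$, this is consistent only with $M_k \le q^{k+1}M_{k-1}$ (the alternative $M_k\ge M_{k-1}$ would force $M_k \le q^{k+1}M_k$, hence $M_k=0$, and the bound still holds). Chaining this down to $M_0$ gives $M_k \le q^{(k+1)+k+\cdots+2}M_0 \le q^{(k+1)+k+\cdots+2+1}\epsilon = q^{(k+1)(k+2)/2}\epsilon$.

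The one delicate point is the index accounting: I must confirm that the length-$n$ averaging window $\{t-1,\dots,t-n\}$ in Lemma~\ref{lem:mean_linear_verII} never reaches past the immediately preceding epoch, so that only $M_{k-1}$ and $M_k$ enter — which is exactly why handling the base epoch $k=0$ separately via Lemma~\ref{lem:convergence_efficient_h_iqn} is convenient, as it sidesteps the non-positive indices $t-i \le 0$ that would otherwise appear — and I must remove the self-reference $M_k$ from the right-hand side using $q<1$. Beyond that, the argument is just the elementary unrolling above together with the bookkeeping identity $\ceil{t/n} = \floor{(t-1)/n}+1$; Lemma~\ref{lem:convergence_efficient_h_iqn} is otherwise needed only to certify that the admissible pairs $(\epsilon,\sigma_0)$ are non-vacuous.
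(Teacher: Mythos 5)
Your proposal is correct and follows essentially the same route as the paper: both define the per-epoch maximum residual (your $M_k$ is exactly the paper's $\zeta^k$), use Lemma \ref{lem:convergence_efficient_h_iqn} for epoch $0$, apply Lemma \ref{lem:mean_linear_verII} with the observation that the length-$n$ window only reaches epochs $k-1$ and $k$, and unroll the resulting recursion $\zeta^k \le (1-\tfrac{\mu}{dL})^{k+1}\zeta^{k-1}$. The only cosmetic difference is how the self-reference is removed: the paper runs an inner induction over the position within the epoch, while you take the epoch-wise maximum and dispose of the case $\max = M_k$ via $q^{k+1}<1$; both are valid.
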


The proof of Theorem \ref{lem:final_rate} involves the construction of a sequence that provides an upper bound on the residual \(\|{\x}^t - {\x}^\star\|\), and can be found in Appendix \ref{app:final_rate}. 
\begin{rem}
	It is instructive to compare our convergence rate with that of IGS \cite[Theorem 3]{gao2020incremental}.~According to their result, the rate is given as
	\(
	\|{\x}^t - {\x}^\star\| \leq \left(1 - \frac{\mu}{dL}\right)^{\frac{k(k+1)}{2}} r^{k_0} ||{\x}^0 - {\x}^\star||
	\), 
	\(\forall t \geq 1\), \(r \in (0, 1), k = \left\lfloor\frac{t-1}{n}\right\rfloor + 1 - k_0\) and \(k_0\) is a constant such that \(\left(1 - \frac{\mu}{dL}\right)^{k_0}D \leq 1\).~The parameter \(D\) depends on the underlying objective function.
	Observe that their superlinear rate only takes effect after \(\left\lfloor\frac{t-1}{n}\right\rfloor \geq k_0 - 1\), and \(k_0\) could potentially be large, though it is not possible to infer the bounds on \(k_0\) from \cite{gao2020incremental}. 
	In contrast, our convergence rate guarantees superlinear convergence right from the first iteration.
\end{rem}

\section{NUMERICAL EXPERIMENTS}\label{sec:experiments}
\subsection{Quadratic Function Minimization}
We begin with a comparative analysis of the empirical performance of SLIQN, IQN, Sharpened BFGS (SBFGS) \cite{jin2022sharpened}, and IGS on a synthetic quadratic minimization problem. 

\textbf{Problem Definition}:~We consider the function \(f(\x) = \frac{1}{n} \sum_{i = 1}^{n} \big(\frac{1}{2} \ip{\x}{\A_i \x} + \ip{\bm{b}_i}{\x}\big),\) where \(\A_i \succ \mathbf{0}\), and \(\bm{b}_i \in \Rn ^ d\), \( \forall i \in [n]\). The detailed generation scheme for \(\A_i, \b_{i}\) can be found in Appendix \ref{app:numerical_simulations}.

\textbf{Experiments and Inference}:
We study the performance of the algorithms on two extreme cases: $d \gg n$ (Fig. \ref{synthetic_N_20_d_500_xi_2.png}), and $n \gg d$ (Fig. \ref{synthetic_N_50000_d_10_xi_1.png}). In each case, we plot the normalized error \(\norm{\x ^ t - \x^\star} / \norm{\x^0 - \x^\star}\) against the number of effective passes or epochs. We see that in both these cases, SLIQN outperforms IGS, IQN, and SBFGS. 
We also observe that in the case where $n \gg d$, IGS outperforms IQN, whereas in the case where $d \gg n$, IQN surpasses IGS. This is because IGS devotes the initial \(\cO(d)\) iterations to constructing a precise Hessian approximation, after which its fast convergence phase kicks in. 
On the other hand, since IQN takes the descent step in the Newton direction, its Hessian approximation is never precise and therefore its normalized error decreases at more or less a ``consistent" rate. 
SLIQN combines the strengths of both IQN and IGS: during the initial iterations when its Hessian approximation is not accurate enough, the classical BFGS updates are responsible for the progress made. In the later iterations, when the Hessian has been sufficiently well approximated, its fast convergence phase kicks in.

\begin{figure}
	\centering
	\begin{subfigure}[b]{0.37\textwidth}
		\includegraphics[height = 0.18\textheight, width=\textwidth]{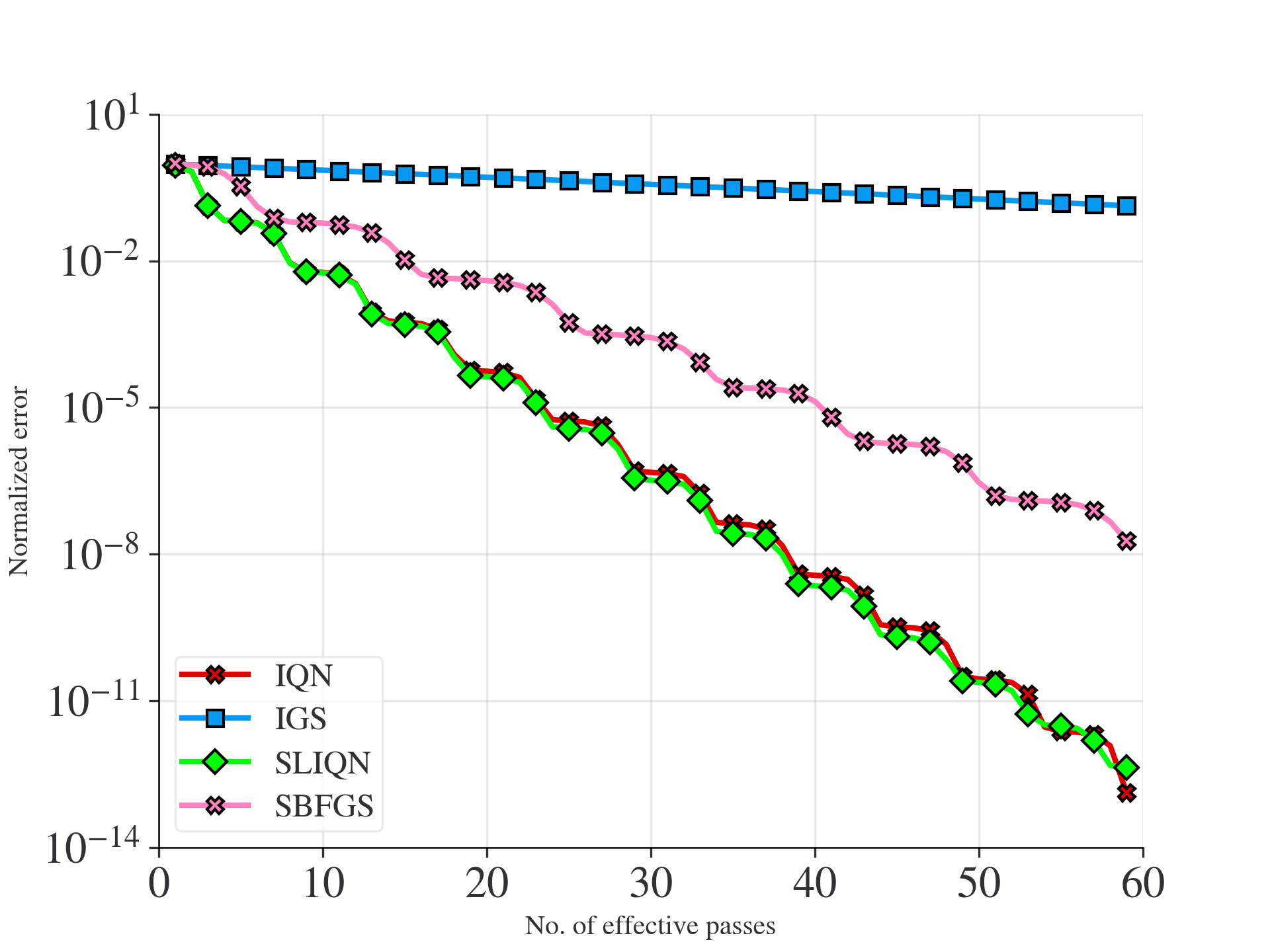}	
		\caption{\(n = 20, d = 500\)}
		\label{synthetic_N_20_d_500_xi_2.png}
	\end{subfigure}
	\begin{subfigure}[b]{0.37\textwidth}
		\includegraphics[height = 0.18\textheight, width=\textwidth]{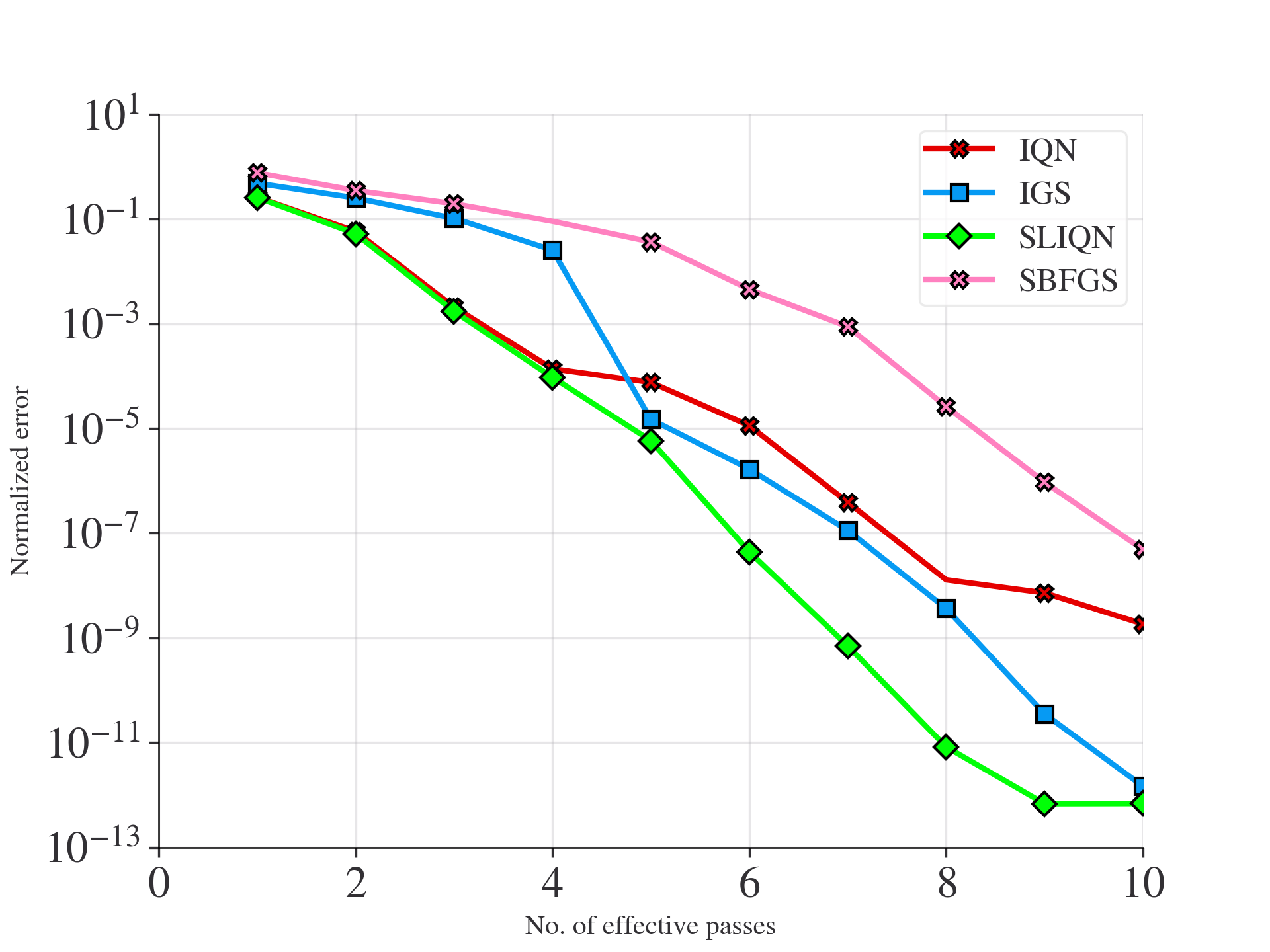}
		\caption{\(n = 50000, d = 10\)}	
		\label{synthetic_N_50000_d_10_xi_1.png}
	\end{subfigure}
	\caption{Normalized error vs. number of effective passes for the quadratic minimization problem}
\end{figure}

\begin{figure}[!htb]
\hspace{-2.25cm}
	\includegraphics[height=1.2\columnwidth, width=1.3\columnwidth]{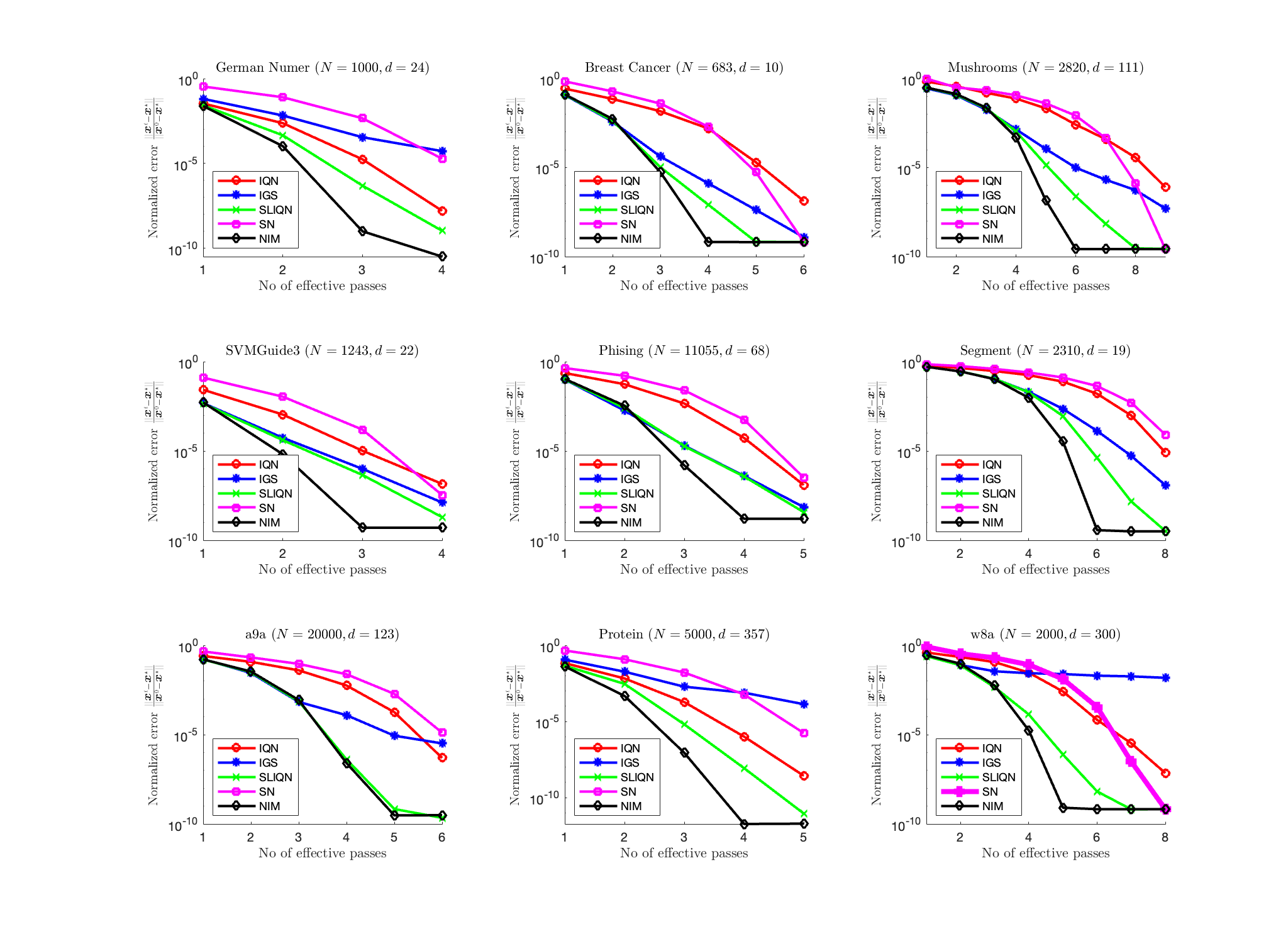}	
	\caption{Normalized error vs. number of effective passes for regularized logistic loss minimization}
	\label{all_combined_SN_edited_II.eps}
\end{figure}

\subsection{Regularized Logistic Regression}
We now compare the performance of SLIQN against IQN, IGS, NIM, and SN on the regularized logistic regression task given by  
\begin{align}
    \min_{\x}f(\x) &\coloneqq \frac{1}{N} \sum_{i = 1} ^ {N} \big(y_{i} \log(1 + e ^ {-\ip{\x}{\z_{i}}}) + (1 - y_{i}) \log(1 + e ^ {\ip{\x}{\z_{i}}}) \big) + \frac{\lambda}{2} \norm{\x} ^ {p},
\end{align}
where \(\{\z_{i}\}_{i = 1} ^ {N}\) are the training samples, 
\(\{y_{i}\}_{i = 1} ^ {N}\) are their corresponding binary labels
and $p$ is set at $2.1$. 
It is easy to observe that \(f(\x)\) is smooth, strongly convex and has a Lipschitz continuous Hessian, thereby satisfying Assumptions \ref{ass:smooth_strong}, \ref{ass:lip_hessian}. 
We compare the algorithms across \(9\) datasets with a large variation in the values of $n$ and $d$. 
Each algorithm is initialization with the same iterate and the regularization parameter is set as \(\lambda = {1}/{N}\). 
For SN, we set the mini-batch size \(\tau = N\) since that is the regime it works best in. Please refer to Appendix \ref{sec:numerical_simulations} for a complete experimental setup.
We observe in Figure \ref{all_combined_SN_edited_II.eps} that SLIQN outperforms IGS, IQN, and SN on each of the $9$ datasets from LIBSVM by \cite{libsvm}. This supports our claim that SLIQN offers the best of both, IQN and IGS.
Furthermore, we observe that while NIM outperforms SLIQN, their performance remains comparable. 
It is important to note that NIM utilizes the full Hessian information for the descent step and is an \(\mathcal{O}(d^3)\) algorithm, while SLIQN has a per-iteration complexity of \(\mathcal{O}(d^2)\).
Thus, these results underscore the superiority of SLIQN over other incremental QN style methods.

\section{CONCLUSION AND FUTURE WORK}
We introduced the SLIQN method for minimizing finite-sum problems. SLIQN enjoys the best known incremental rate of $\mathcal{O}((1-\tfrac{\mu}{dL})^{t^2/n^2})$,
has a $\cO(d ^ 2)$ per-iteration cost, an explicit superlinear convergence rate, and exhibits a superior empirical performance compared to several other incremental and stochastic QN methods. The key novelty in the proposed algorithm is the construction of modified update rules using a clever multiplicative factor and a lazy propagation strategy. We back up our empirical results with a comprehensive theory that explains the superior performance of SLIQN. The convergence rate of SLIQN is locally superlinear; analyzing the global convergence of the proposed algorithm remains as a future work.

\bibliography{references}

\newpage

\appendix
\addcontentsline{toc}{section}{Appendix} 
\part{Appendix} 
\parttoc 


\section{ESTABLISHED RESULTS}\label{sec:prelim_lemma}
\begin{lemma}[Banach's Lemma]\label{lem:banach}
    		Let \(\A \in \mathbb{R}^{d \times d}\) be a matrix such that its norm     satisfies \(\norm{\A} < 1\). 
                Then the matrix \((\I + \A)\) is invertible and 
            \begin{align}\nn
				\frac{1}{1 + \norm{\A}} < \norm{(\I + \A) ^ {-1}} < \frac{1}{1 - \norm{\A}}.
		\end{align}
		\end{lemma}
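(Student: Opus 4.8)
The plan is to prove invertibility and both inequalities through the Neumann (geometric) series $\sum_{k = 0}^{\infty} (-\A)^k$, whose convergence is driven entirely by the hypothesis $\norm{\A} < 1$. First I would set up the partial sums $\M_N := \sum_{k = 0}^{N} (-\A)^k$ and argue that they form a Cauchy sequence: by submultiplicativity of the spectral norm $\norm{(-\A)^k} \le \norm{\A}^k$, so for $N' > N$ the difference satisfies $\norm{\M_{N'} - \M_N} \le \sum_{k = N + 1}^{N'} \norm{\A}^k$, which is controlled by the tail of a convergent geometric series. Since $(\Rn^{d \times d}, \norm{\cdot})$ is complete, $\M_N$ converges to a limit $\M_\infty$.

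To identify $\M_\infty$ as $(\I + \A)^{-1}$, I would multiply through the telescoping identity $(\I + \A)\M_N = \I - (-\A)^{N + 1}$ (and symmetrically $\M_N(\I + \A) = \I - (-\A)^{N + 1}$) and let $N \to \infty$; because $\norm{(-\A)^{N + 1}} \le \norm{\A}^{N + 1} \to 0$, the right-hand sides converge to $\I$, yielding $(\I + \A)\M_\infty = \M_\infty(\I + \A) = \I$. Hence $\I + \A$ is invertible with inverse $\M_\infty$. The upper bound then follows immediately from the triangle inequality applied to the convergent series, $\norm{(\I + \A)^{-1}} = \norm{\M_\infty} \le \sum_{k = 0}^{\infty} \norm{\A}^k = \tfrac{1}{1 - \norm{\A}}$. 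For the lower bound I would take norms in the identity $\I = (\I + \A)(\I + \A)^{-1}$ and use submultiplicativity together with $\norm{\I + \A} \le 1 + \norm{\A}$ to obtain $1 \le (1 + \norm{\A})\,\norm{(\I + \A)^{-1}}$, which rearranges to $\norm{(\I + \A)^{-1}} \ge \tfrac{1}{1 + \norm{\A}}$.

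The genuinely delicate point is the rigor of the convergence step --- establishing completeness of the matrix space and passing to the limit in the telescoping identity --- since everything downstream is a one-line consequence once $(\I + \A)^{-1}$ has been expressed as the series. A secondary caveat concerns the \emph{strict} inequalities as stated: the triangle-inequality and submultiplicativity arguments only yield non-strict bounds, and equality can in fact occur in degenerate cases (for instance $\A = \zero$ saturates both sides, and $\A = c\I$ with $c \in (0,1)$ saturates the lower bound). Since only the non-strict estimates are invoked in the applications of this lemma, I would present the clean $\le$ and $\ge$ derivations and remark that the bounds become strict away from these degenerate configurations.
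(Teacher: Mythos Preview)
Your Neumann-series argument is the standard and correct route; the paper itself lists this lemma under ``Established Results'' and does not supply a proof, so there is nothing to compare against. Your observation about the strict inequalities is well taken: as written the lemma is slightly misstated, since $\A = \zero$ (for example) gives $\norm{(\I+\A)^{-1}} = 1 = \tfrac{1}{1+\norm{\A}} = \tfrac{1}{1-\norm{\A}}$, and indeed the paper only ever uses the non-strict upper bound in its applications of the lemma.
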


		\begin{proposition}[Sherman-Morrison Formula]
			Let $\A \in \Rn^{d \times d}$ be an invertible matrix. Then, for all vectors $\u,\v \in \Rn^d$, we have
			\begin{align}
				(\A + \u\v^\T) ^ {-1} = \A ^ {-1} - \frac{\A ^ {-1}\u\v^\T\A^{-1}}{1 + \ip{\v}{\A^{-1}\u}}.\label{shermon}
			\end{align}
		\end{proposition}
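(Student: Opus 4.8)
The plan is to prove the identity by direct verification: I would multiply the purported inverse on the right of $\A + \u\v^\T$ and check that the product collapses to $\I$. Since $\A + \u\v^\T \in \Rn^{d\times d}$ is square, exhibiting a right inverse suffices, because a right inverse of a square matrix is automatically a two-sided inverse.

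First I would record that the right-hand side is well-defined, i.e. that the scalar denominator $1 + \ip{\v}{\A^{-1}\u}$ is nonzero. This is in fact equivalent to the invertibility of $\A + \u\v^\T$: by the matrix determinant lemma, $\det(\A + \u\v^\T) = \det(\A)\,(1 + \ip{\v}{\A^{-1}\u})$, so given that $\A$ is invertible, nonsingularity of $\A + \u\v^\T$ forces $1 + \ip{\v}{\A^{-1}\u} \neq 0$. Throughout I would abbreviate the scalar $s := \ip{\v}{\A^{-1}\u} = \v^\T\A^{-1}\u$, the point being that $s$ is a number and so commutes freely through the matrix products.

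The core step is the computation
\begin{align}
	(\A + \u\v^\T)\Big(\A^{-1} - \tfrac{\A^{-1}\u\v^\T\A^{-1}}{1 + s}\Big) = \I + \u\v^\T\A^{-1} - \tfrac{\u\v^\T\A^{-1} + \u\,s\,\v^\T\A^{-1}}{1 + s}, \nn
\end{align}
where the key simplification is that the iterated product reduces via $\u\v^\T\A^{-1}\u\v^\T\A^{-1} = \u\,(\v^\T\A^{-1}\u)\,\v^\T\A^{-1} = s\,\u\v^\T\A^{-1}$, so that the middle scalar factor can be pulled out. Collecting the common rank-one term $\u\v^\T\A^{-1}$, its net coefficient is $1 - \tfrac{1 + s}{1 + s} = 0$, so the correction cancels exactly and the product equals $\I$.

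The argument is routine, so there is no genuine obstacle; the only points requiring care are the bookkeeping of the scalar $s$ and grouping the two rank-one contributions so that their combined coefficient visibly vanishes. I would then invoke squareness to upgrade this right inverse to the two-sided inverse, completing the proof.
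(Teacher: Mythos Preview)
Your direct-verification argument is correct and is the standard proof of the Sherman--Morrison formula. The paper does not supply its own proof of this proposition: it is listed under ``Established Results'' and simply stated without argument, so there is nothing to compare against. Your observation that the statement tacitly requires $1 + \ip{\v}{\A^{-1}\u} \neq 0$ (equivalently, via the matrix determinant lemma, that $\A + \u\v^\T$ is itself invertible) is a useful clarification, since the paper's phrasing ``for all vectors $\u,\v$'' omits this hypothesis.
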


		\begin{lemma}[Lemma 2.1, Lemma 2.2 \cite{rodomanov2021rates}]\label{boundedness}
			Consider positive definite matrices \(\A, \G \in \mathbb{R} ^ {d \times d}\) and suppose \(\G_{+} := \bfgs{\G, \A, \u}\), where \(\u \neq \zero \). Then, the following results hold:
			\begin{enumerate}
				\item For any constants \(\xi, \eta \geq 1\), we have \begin{align}\nn
					\frac{1}{\xi}\A \preceq \G \preceq \eta \A \implies \frac{1}{\xi} \preceq \G_{+} \preceq \eta \A.
				\end{align}
				\item If \(\A \preceq \G\), then we have \begin{align}\nn
					\sigma (\A, \G) \geq \sigma(\A, \G_{+}).
				\end{align}
			\end{enumerate}
		\end{lemma}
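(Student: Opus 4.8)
The statement bundles two independent facts, so the plan is to prove them separately. For part 1 (preservation of the spectral sandwich), the first move is to eliminate the target matrix \(\A\) by a congruence change of variables. Setting \(\tilde{\G} := \A^{-1/2}\G\A^{-1/2}\), \(\tilde{\G}_+ := \A^{-1/2}\G_+\A^{-1/2}\), and \(\tilde{\u} := \A^{1/2}\u\), the BFGS update \eqref{BFGS} is covariant under congruence and becomes the update toward the identity,
\[ \tilde{\G}_+ = \tilde{\G} - \frac{\tilde{\G}\tilde{\u}\tilde{\u}^{\T}\tilde{\G}}{\ip{\tilde{\u}}{\tilde{\G}\tilde{\u}}} + \frac{\tilde{\u}\tilde{\u}^{\T}}{\ip{\tilde{\u}}{\tilde{\u}}}, \]
while the hypothesis \(\frac{1}{\xi}\A \preceq \G \preceq \eta\A\) turns into \(\frac{1}{\xi}\I \preceq \tilde{\G} \preceq \eta\I\) and the conclusion into \(\frac{1}{\xi}\I \preceq \tilde{\G}_+ \preceq \eta\I\). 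Note \(\u \neq \zero\) together with \(\G,\A\succ\zero\) guarantees \(\ip{\u}{\G\u},\ip{\u}{\A\u}>0\), so the update is well defined.

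Next I would expose the eigenstructure of \(\tilde{\G}_+\). Writing \(w := \tilde{\G}^{1/2}\tilde{\u}\) and letting \(P_w^\perp := \I - ww^{\T}/\ip{w}{w}\) be the orthogonal projector onto \(w^\perp\), the first two terms combine into \(\tilde{\G}^{1/2}P_w^\perp\tilde{\G}^{1/2}\), so that
\[ \tilde{\G}_+ = \tilde{\G}^{1/2}P_w^\perp\tilde{\G}^{1/2} + \frac{\tilde{\u}\tilde{\u}^{\T}}{\ip{\tilde{\u}}{\tilde{\u}}}. \]
The first summand is symmetric PSD with kernel exactly \(\mathrm{span}(\tilde{\u})\) and range \(\tilde{\u}^\perp\), whereas the second is the rank-one projector onto \(\mathrm{span}(\tilde{\u})\); since these act on orthogonal complementary subspaces, \(\tilde{\G}_+\) is block diagonal, contributing the single eigenvalue \(1\) on \(\mathrm{span}(\tilde{\u})\) (equivalently the secant identity \(\tilde{\G}_+\tilde{\u}=\tilde{\u}\)) and the nonzero eigenvalues of \(\tilde{\G}^{1/2}P_w^\perp\tilde{\G}^{1/2}\) on \(\tilde{\u}^\perp\). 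The key spectral fact is that, with \(M := \tilde{\G}^{1/2}P_w^\perp\), the matrices \(MM^{\T} = \tilde{\G}^{1/2}P_w^\perp\tilde{\G}^{1/2}\) and \(M^{\T}M = P_w^\perp\tilde{\G}P_w^\perp\) share the same nonzero eigenvalues; since \(\frac{1}{\xi}P_w^\perp \preceq P_w^\perp\tilde{\G}P_w^\perp \preceq \eta P_w^\perp\) by projection monotonicity, those eigenvalues all lie in \([\tfrac{1}{\xi},\eta]\). Because \(\xi,\eta \geq 1\), the eigenvalue \(1\) also lies in \([\tfrac{1}{\xi},\eta]\), so every eigenvalue of \(\tilde{\G}_+\) is in this interval, giving \(\frac{1}{\xi}\I \preceq \tilde{\G}_+ \preceq \eta\I\) and, undoing the congruence, \(\frac{1}{\xi}\A \preceq \G_+ \preceq \eta\A\).

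For part 2, I would work directly with \(\sigma(\A,\G) = \ip{\A^{-1}}{\G-\A} = \mathrm{Tr}(\A^{-1}\G) - d\), which measures the approximation against \(\A\). By linearity of the trace and the BFGS formula only the two rank-one corrections survive, and a short cyclic-trace computation gives
\[ \sigma(\A,\G) - \sigma(\A,\G_+) = \mathrm{Tr}\!\left(\A^{-1}\frac{\G\u\u^{\T}\G}{\ip{\u}{\G\u}}\right) - \mathrm{Tr}\!\left(\A^{-1}\frac{\A\u\u^{\T}\A}{\ip{\u}{\A\u}}\right) = \frac{\ip{\G\u}{\A^{-1}\G\u}}{\ip{\u}{\G\u}} - 1. \]
It then remains to show this is nonnegative, i.e. \(\ip{\G\u}{\A^{-1}\G\u} \geq \ip{\u}{\G\u}\), which is exactly where the hypothesis \(\A \preceq \G\) enters: it yields \(\A^{-1} \succeq \G^{-1}\), hence \(\G\A^{-1}\G \succeq \G\G^{-1}\G = \G\), and contracting with \(\u\) gives the claim.

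The routine part is part 2 — a two-line trace identity followed by operator antitonicity of the inverse. I expect the main obstacle to be the lower bound in part 1: unlike the upper bound, \(\tilde{\G}_+ \succeq \frac{1}{\xi}\I\) does not follow merely from \(P_w^\perp \preceq \I\), since the projection \(P_w^\perp\) can shrink eigenvalues; the clean resolution is the \(MM^{\T}\)/\(M^{\T}M\) identity above (or, equivalently, passing to the dual inverse-BFGS update of \(\G_+^{-1}\), which has the same projection structure and directly yields \(\G_+^{-1} \preceq \xi\A^{-1}\)). I would also take care to justify that the two summands of \(\tilde{\G}_+\) are genuinely supported on orthogonal complementary subspaces, as that block-diagonal structure is what lets me read off the spectrum cleanly.
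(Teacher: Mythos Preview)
The paper does not prove this lemma: it is listed under ``Established Results'' and attributed to Lemmas~2.1 and~2.2 of \cite{rodomanov2021rates}, so there is no in-paper proof to compare against. Your argument is nonetheless correct and self-contained. The congruence reduction to \(\A=\I\), the block-diagonal decomposition of \(\tilde{\G}_+\) on \(\mathrm{span}(\tilde{\u})\oplus\tilde{\u}^\perp\), and the \(MM^{\T}/M^{\T}M\) eigenvalue transfer are all valid and give part~1 cleanly; part~2 is the standard trace computation plus operator antitonicity of the inverse, exactly as you wrote. One cosmetic point: the paper's convention \eqref{sigma} is \(\sigma(\B,\K)=\ip{\K^{-1}}{\B-\K}\) with the approximation first, so the lemma as printed has the arguments swapped relative to how it is actually used later (e.g.\ in the proof of Lemma~\ref{lem:unit_step_dec}); you correctly proved the intended inequality \(\mathrm{Tr}(\A^{-1}\G)\geq\mathrm{Tr}(\A^{-1}\G_+)\).
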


		\begin{lemma}[Lemma 4.2 \cite{rodomanov2021greedy}]\label{integral_lemma}
			Suppose an objective function \(f(\x)\) is strongly self-concordant with constant \(M > 0\). Consider \(\x, \y \in \mathbb{R}^{d}\), \(r := \norm{\y - \x}_{\x}\), and \(\K := \int_{0} ^ {1} \nabla ^ {2} f(\x + \tau(\y - \x)) d\tau\). Then, we have that \begin{align}\nn
				\frac{\nabla ^ {2} f(\x)}{1 + Mr} \preceq \nabla ^ {2} f(\y) \preceq (1 + Mr) \nabla ^ {2} f(\x), \\ \frac{\nabla ^ {2} f(\x)}{1 + \frac{Mr}{2}} \preceq \K \preceq (1 + \frac{Mr}{2}) \nabla ^ 2 f(\x),\nn \\
				\frac{\nabla ^ {2} f(\y)}{1 + \frac{Mr}{2}} \preceq \K \preceq (1 + \frac{Mr}{2}) \nabla ^ 2 f(\y)\nn.
			\end{align}
		\end{lemma}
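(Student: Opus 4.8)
The plan is to derive all three pairs of inequalities from the single strong self-concordance inequality $\nabla^2 f(\y) - \nabla^2 f(\x) \preceq M\norm{\y-\x}_\z \nabla^2 f(\w)$ established above, exploiting the fact that the displacement anchor $\z$ and the Hessian anchor $\w$ may be chosen freely and independently, and that the inequality holds for every ordered pair. The guiding principle throughout is to \emph{always} fix the displacement anchor at $\z=\x$, so that every norm that appears reduces to $r=\norm{\y-\x}_\x$ or a scalar multiple of it, exactly matching the claimed constants.

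For the first pair I would instantiate the definition twice. Taking the pair $(\x,\y)$ with $\z=\w=\x$ gives $\nabla^2 f(\y)-\nabla^2 f(\x)\preceq Mr\,\nabla^2 f(\x)$, i.e.\ the upper bound $\nabla^2 f(\y)\preceq(1+Mr)\nabla^2 f(\x)$. For the lower bound I would apply the definition to the swapped pair $(\y,\x)$ with $\z=\x$ and $\w=\y$, so that $\norm{\x-\y}_\x=r$ and $\nabla^2 f(\x)-\nabla^2 f(\y)\preceq Mr\,\nabla^2 f(\y)$, which rearranges to $\tfrac{1}{1+Mr}\nabla^2 f(\x)\preceq\nabla^2 f(\y)$.

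For the bounds on $\K$ I would parametrize the segment by $\x_\tau:=\x+\tau(\y-\x)$, observe $\K=\int_0^1\nabla^2 f(\x_\tau)\,d\tau$, and note $\norm{\x_\tau-\x}_\x=\tau r$ and $\norm{\x_\tau-\y}_\x=(1-\tau)r$. The second pair then follows by applying the just-proved pointwise bounds (with $\x_\tau$ in place of $\y$) and integrating over $\tau\in[0,1]$, which is legitimate since the positive semidefinite order is preserved under integration; the upper bound uses $\int_0^1(1+M\tau r)\,d\tau=1+\tfrac{Mr}{2}$. The third pair is obtained identically, except I would invoke the definition with the displacement anchor still fixed at $\z=\x$ and the Hessian anchor at $\w=\y$ (for the upper bound) or $\w=\x_\tau$ (for the lower bound), so that the displacement reads $(1-\tau)r$ rather than a quantity measured in the $\y$-metric; integrating $\int_0^1\big(1+M(1-\tau)r\big)\,d\tau=1+\tfrac{Mr}{2}$ closes the upper bound.

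The one genuinely analytic step — and the main obstacle — is the lower bound on $\K$. Integrating the pointwise lower bound gives $\K\succeq\big(\int_0^1\tfrac{d\tau}{1+M\tau r}\big)\nabla^2 f(\x)=\tfrac{\ln(1+Mr)}{Mr}\,\nabla^2 f(\x)$, so I must verify the elementary inequality $\tfrac{\ln(1+s)}{s}\geq\tfrac{1}{1+s/2}$ at $s=Mr>0$ to convert the logarithmic factor into $\tfrac{1}{1+Mr/2}$. This reduces to $g(s):=(2+s)\ln(1+s)-2s\geq 0$, which I would establish from $g(0)=0$ and $g'(s)=\ln(1+s)-\tfrac{s}{1+s}\geq 0$, the last inequality being the standard bound $\ln(1+s)\geq\tfrac{s}{1+s}$. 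The same computation, after the substitution $u=1-\tau$, handles the lower bound in the third pair, completing the proof.
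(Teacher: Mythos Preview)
Your proof is correct and complete. The paper itself does not supply a proof of this lemma: it appears in the ``Established Results'' appendix and is simply quoted from \cite{rodomanov2021greedy}, Lemma~4.2, so there is no in-paper argument to compare against. Your derivation --- pinning the displacement anchor at $\z=\x$ so every norm collapses to a multiple of $r$, integrating the pointwise Hessian bounds along the segment, and closing the lower bounds on $\K$ via the elementary inequality $(2+s)\ln(1+s)\ge 2s$ --- is the natural route and is essentially the argument one finds in the cited source.
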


		\begin{lemma}[Theorem 2.5 \cite{rodomanov2021greedy}]\label{one_greedy}
			Consider positive definite matrices \(\A, \G \in \mathbb{R} ^ {d \times d}\) such that \(\A \preceq \G\) and \(\mu \I \preceq \A \preceq L \I\) for constants \(\mu, L > 0\). Suppose \(\G_{+} := \bfgs{\G, \A, \bar{\u}(\G, \A)}\), where \(\bar{\u}(\G, \A)\) \eqref{greedy_vec} is the greedy vector of \(\G\) with respect to \(\A\). Then, the following holds:\begin{align}
				\sigma(\G_{+}, \A) \leq \big(1 - \frac{\mu}{d L}\big) \sigma(\G, \A).\nn
			\end{align}
		\end{lemma}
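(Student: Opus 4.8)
The plan is to track exactly how one greedy BFGS update changes the potential $\sigma(\cdot,\A)=\ip{\A^{-1}}{\cdot-\A}$, exploiting that this potential is \emph{linear} in its first slot. Writing $\u:=\bar{\u}(\G,\A)$ and expanding $\G_+=\bfgs{\G,\A,\u}=\G-\frac{\G\u\u^\T\G}{\ip{\u}{\G\u}}+\frac{\A\u\u^\T\A}{\ip{\u}{\A\u}}$, I would apply $\ip{\A^{-1}}{\cdot}$ term by term and simplify the two rank-one contributions with the cyclic property of the trace. The identities $\ip{\A^{-1}}{\A\u\u^\T\A}=\ip{\u}{\A\u}$ and $\ip{\A^{-1}}{\G\u\u^\T\G}=\ip{\u}{\G\A^{-1}\G\u}$ then yield the exact one-step relation
\[
\sigma(\G_+,\A)=\sigma(\G,\A)+1-\frac{\ip{\u}{\G\A^{-1}\G\u}}{\ip{\u}{\G\u}}.
\]
With this identity in hand, the theorem reduces to showing that the ``gain'' term obeys $\frac{\ip{\u}{\G\A^{-1}\G\u}}{\ip{\u}{\G\u}}\ge 1+\frac{\mu}{dL}\sigma(\G,\A)$ for the greedy direction.

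I would establish this bound in two independent sub-steps. First, a Cauchy--Schwarz argument applied to $\A^{-1/2}\G\u$ and $\A^{1/2}\u$ gives $\ip{\u}{\G\u}^2\le \ip{\u}{\G\A^{-1}\G\u}\,\ip{\u}{\A\u}$, which rearranges to $\frac{\ip{\u}{\G\A^{-1}\G\u}}{\ip{\u}{\G\u}}\ge \frac{\ip{\u}{\G\u}}{\ip{\u}{\A\u}}$; this conveniently eliminates $\A^{-1}$ from the gain and reduces everything to the Rayleigh-type ratio that the greedy rule explicitly maximizes. Second, I would lower-bound this ratio at $\u=\bar{\u}$. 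Since $\bar{\u}$ maximizes $\frac{\ip{\u}{\G\u}}{\ip{\u}{\A\u}}$ over $\{\e_i\}$, and $\frac{\e_i^\T\G\e_i}{\e_i^\T\A\e_i}=1+\frac{\e_i^\T(\G-\A)\e_i}{\e_i^\T\A\e_i}$, the mediant (ratio-of-sums) inequality applied coordinatewise gives $\max_i\frac{\e_i^\T(\G-\A)\e_i}{\e_i^\T\A\e_i}\ge \frac{\mathrm{Tr}(\G-\A)}{\mathrm{Tr}(\A)}$, where nonnegativity of the numerators follows from $\G-\A\succeq\zero$ and positivity of the denominators from $\A\succ\zero$.

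To finish, I would convert the two traces into the desired constants using the spectral bounds $\mu\I\preceq\A\preceq L\I$. The upper bound gives $\mathrm{Tr}(\A)\le dL$, while writing $\mathrm{Tr}(\G-\A)=\mathrm{Tr}\!\big(\A^{1/2}\,\A^{-1/2}(\G-\A)\A^{-1/2}\,\A^{1/2}\big)$ and using $\A\succeq\mu\I$ on the inner positive semidefinite factor yields $\mathrm{Tr}(\G-\A)\ge \mu\,\ip{\A^{-1}}{\G-\A}=\mu\,\sigma(\G,\A)$. Chaining these inequalities produces $\frac{\ip{\u}{\G\u}}{\ip{\u}{\A\u}}\ge 1+\frac{\mu}{dL}\sigma(\G,\A)$, and substituting back into the one-step relation gives $\sigma(\G_+,\A)\le\big(1-\frac{\mu}{dL}\big)\sigma(\G,\A)$, as claimed. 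I expect the main obstacle to be the second sub-step: the greedy rule is an intrinsically \emph{coordinatewise} and $\A$-dependent selection, so the crux is bridging it to the \emph{global}, coordinate-free potential $\sigma(\G,\A)$. This is precisely where the mediant inequality together with two-sided spectral control on $\A$ does the work, and where a naive comparison (for instance, bounding against $\mathrm{Tr}(\G)$ rather than $\mathrm{Tr}(\A)$, which need not be controlled) would fail to close the argument.
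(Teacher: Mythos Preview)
Your proof is correct in every step: the one-step identity for $\sigma$ under a BFGS update, the Cauchy--Schwarz reduction $\frac{\ip{\u}{\G\A^{-1}\G\u}}{\ip{\u}{\G\u}}\ge\frac{\ip{\u}{\G\u}}{\ip{\u}{\A\u}}$, the mediant bound $\max_i\frac{(\G-\A)_{ii}}{\A_{ii}}\ge\frac{\mathrm{Tr}(\G-\A)}{\mathrm{Tr}(\A)}$, and the spectral conversions $\mathrm{Tr}(\A)\le dL$ and $\mathrm{Tr}(\G-\A)\ge\mu\,\sigma(\G,\A)$ all check out and chain as you state.

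There is nothing to compare against in the paper: this lemma appears in the ``Established Results'' appendix with no proof, only a citation to \cite[Theorem~2.5]{rodomanov2021greedy}. Your argument is precisely the standard proof from that reference, so you have correctly reconstructed what the paper leaves to citation.
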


\section{SUPPORTING LEMMAS}\label{app:supporting_lemma}
\begin{lemma}\label{lem:simple_lemma}
	For all positive definite matrices \(\A, \G \in \mathbb{R} ^ {d\times d}\),        if \(\A \preceq L\I\) and \(\A \preceq \G\), 
	then \begin{align}
		\norm{\G - \A} \leq L\sigma(\G, \A)\nn.
	\end{align}	
\end{lemma}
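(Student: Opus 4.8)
\textbf{Proof plan for Lemma \ref{lem:simple_lemma}.}

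The plan is to reduce the spectral-norm bound $\norm{\G - \A}$ to a trace inner product that matches the definition of $\sigma(\G,\A) = \ip{\A^{-1}}{\G-\A}$. The starting observation is that $\G - \A \succeq \zero$ by hypothesis $\A \preceq \G$, so $\G-\A$ is symmetric positive semidefinite, and hence $\norm{\G-\A} = \lambda_{\max}(\G-\A) \leq \mathrm{Tr}(\G-\A) = \ip{\I}{\G-\A}$, since for a PSD matrix the largest eigenvalue is bounded by the sum of all eigenvalues (the trace). This converts the spectral norm into a plain trace.

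Next I would bring in the second hypothesis $\A \preceq L\I$. Since $\A$ is positive definite, this is equivalent to $L^{-1}\I \preceq \A^{-1}$, i.e. $\A^{-1} - L^{-1}\I \succeq \zero$. Now I use the standard fact that if $\X \succeq \zero$ and $\Y \succeq \zero$ then $\ip{\X}{\Y} = \mathrm{Tr}(\X\Y) \geq 0$ (the trace of a product of two PSD matrices is nonnegative). Applying this with $\X = \A^{-1} - L^{-1}\I$ and $\Y = \G - \A$ gives $\ip{\A^{-1}}{\G-\A} \geq L^{-1}\ip{\I}{\G-\A}$, i.e. $\ip{\I}{\G-\A} \leq L\,\sigma(\G,\A)$. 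Chaining this with the first step yields $\norm{\G-\A} \leq \ip{\I}{\G-\A} \leq L\,\sigma(\G,\A)$, which is the claim.

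There is no real obstacle here; the only point requiring a little care is justifying the two elementary linear-algebra facts used — that $\lambda_{\max}(\X) \leq \mathrm{Tr}(\X)$ for PSD $\X$, and that $\mathrm{Tr}(\X\Y) \geq 0$ for PSD $\X, \Y$ (which follows by writing $\X = \X^{1/2}\X^{1/2}$ and using cyclicity of the trace to get $\mathrm{Tr}(\X^{1/2}\Y\X^{1/2}) \geq 0$, since $\X^{1/2}\Y\X^{1/2} \succeq \zero$). Both are standard, so the write-up should be short: assert PSD-ness of $\G-\A$, bound its spectral norm by its trace, insert the inequality $\A^{-1} \succeq L^{-1}\I$, and conclude.
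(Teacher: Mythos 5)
Your proposal is correct and follows essentially the same route as the paper's proof: bound \(\norm{\G-\A}=\lambda_{\max}(\G-\A)\) by \(\mathrm{Tr}(\G-\A)\) using positive semidefiniteness of \(\G-\A\), then use \(\A \preceq L\I \iff L^{-1}\I \preceq \A^{-1}\) to compare \(\ip{\I}{\G-\A}\) with \(L\,\sigma(\G,\A)\). Your write-up merely makes explicit the elementary facts (trace of a product of PSD matrices is nonnegative) that the paper uses implicitly.
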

\begin{proof}
	For any positive definite matrix $\X \in \mathbb{R} ^ {d\times d}$, let $\lambda_{\max}(\X)$ denote its maximum eigenvalue and 
	let $\sum_{i = 1} ^ {d}\lambda_{i}(\X)$ denote the sum of all of its eigenvalues.
	We can bound $\tfrac{1}{L}\norm{\G - \A}$ as
	\begin{align} \nn
		\tfrac{1}{L}\norm{\G - \A} & \overset{\text{def}}{=} 
		\tfrac{1}{L}\lambda_{\max} (\G - \A)
		\le \nn
		\tfrac{1}{L} \sum_{i=1}^d \lambda_{i} (\G - \A)
		\overset{\text{def}}{=} \nn
		\tfrac{1}{L} \mathrm{Tr}(\G - \A).
	\end{align}
	Recall from the premise, we have \(\A \preceq L\I\), 
	which implies that \(\I \preceq L\A ^ {-1}\).
	Therefore,
	\begin{align*}
		\sigma(\G, \A)                    
		= \ip{\A ^ {-1}}{\G - \A}         
		\ge                               
		\tfrac{1}{L}                      
		\ip{\I}{\G - \A}                  
		=                                 
		\tfrac{1}{L} \mathrm{Tr}(\G - \A) 
		\ge                               
		\tfrac{1}{L}\norm{\G - \A}.       
	\end{align*}
	This completes the proof.
\end{proof}

\begin{lemma}\label{lem:unit_step_dec}
	Let \(f \colon \mathbb{R}^d \rightarrow \mathbb{R}\) be a real valued function
	that is \(\mu\)-strongly convex, \(L\)-smooth, and \(M\)-strongly self-concordant. 
	Let
	\(\x, \x_+ \in \mathbb{R}^d \backslash \{\mathbf{0}\}\) and 
	\(\B\) be a matrix such that \(\B \succeq \nabla ^ {2} f(\x)\).
	Define the constant \(r := \norm{\x_{+} - \x}_\x\) 
	and the matrix \({\bm{P}} := (1 + \tfrac{Mr}{2}) ^ 2\B\). 
	Consider the following BFGS updates: 
	\begin{align}\nn
		\Q     & := \bfgs{\P, (1 + \tfrac{Mr}{2})\K, \x_{+} - \x},                        \\
		\B_{+} & := \bfgs{\Q, \nabla ^ 2 f(\x_{+}), \bar{\u}(\Q, \nabla ^ {2} f(\x_+))}.\nn 
	\end{align} 
	Here, the  matrix 
	\(\K := \int_{0} ^ {1} \nabla ^ 2 f(\x + \tau (\x_{+} - \x)) d\tau\) and 
	the vector \(\bar{\u}(\Q, \nabla ^ 2 f(\x_+))\) is the greedy vector \ref{greedy_vec}. Then, \(\B_{+} \succeq \nabla ^ 2 f(\x_{+})\) and
	\begin{align}\nn
		\sigma(\B_+, \nabla ^ 2 f(\x_+)) \leq \big(1 - \tfrac{\mu}{dL}\big)\bigg(\big(1 + \tfrac{Mr}{2}\big) ^ 4 \sigma(\B, \nabla ^ 2 f(\x)) + d \big(1 + \tfrac{Mr}{2}\big) ^ 4 - d\bigg). 
	\end{align}
\end{lemma}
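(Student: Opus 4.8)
\textbf{Proof proposal for Lemma \ref{lem:unit_step_dec}.}
The plan is to process the two BFGS updates in sequence, tracking how the error in the $\sigma$-sense evolves and verifying the positive-semidefiniteness relation survives each step. I would first handle the correction-and-classical-BFGS step producing $\Q$, and then the greedy step producing $\B_+$.

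\emph{Step 1: Set up the scaled inequality.} Since $\B \succeq \nabla^2 f(\x)$ by hypothesis, and $r = \norm{\x_+-\x}_{\x}$, Lemma \ref{integral_lemma} applied to $\K$ gives $\K \preceq (1+\tfrac{Mr}{2})\nabla^2 f(\x)$. Combining these, $(1+\tfrac{Mr}{2})^2 \B = \P \succeq (1+\tfrac{Mr}{2})^2 \nabla^2 f(\x) \succeq (1+\tfrac{Mr}{2})\K$, so the ``approximation dominates the target'' precondition holds for the classical BFGS update that forms $\Q$. I would then apply part 1 of Lemma \ref{boundedness} (with the roles $\G \leftarrow \P$, $\A \leftarrow (1+\tfrac{Mr}{2})\K$) to conclude $\Q \succeq (1+\tfrac{Mr}{2})\K$, and then invoke Lemma \ref{integral_lemma} again in the form $(1+\tfrac{Mr}{2})\K \succeq \nabla^2 f(\x_+)$ to get $\Q \succeq \nabla^2 f(\x_+)$. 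This establishes the precondition for the greedy step.

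\emph{Step 2: Bound $\sigma(\Q, \nabla^2 f(\x_+))$.} The key quantitative step is to relate $\sigma(\Q, \nabla^2 f(\x_+))$ to $\sigma(\B, \nabla^2 f(\x))$. Here I expect to use the fact that the classical BFGS update is a descent in the $\sigma$-potential relative to its \emph{own} reference matrix — i.e. $\sigma(\Q,(1+\tfrac{Mr}{2})\K) \le \sigma(\P,(1+\tfrac{Mr}{2})\K)$ via a monotonicity property of BFGS (analogous to part 2 of Lemma \ref{boundedness}, which is stated for the greedy update but whose classical analogue holds). Then I would change the reference matrix from $(1+\tfrac{Mr}{2})\K$ to $\nabla^2 f(\x_+)$ and from $\nabla^2 f(\x)$ to scaled versions, using $\sigma(\cdot,\cdot)$ identities: $\sigma(c\X, c\Y) = \sigma(\X,\Y)$ for scalars $c>0$, and the sandwich bounds from Lemma \ref{integral_lemma} that let one swap $\K$, $\nabla^2 f(\x)$ and $\nabla^2 f(\x_+)$ at the cost of factors of $(1+\tfrac{Mr}{2})$. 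Carefully accounting for how a change of reference matrix $\A \to \A'$ with $\tfrac{1}{c}\A' \preceq \A \preceq c\A'$ inflates $\sigma(\G,\A)$ — roughly $\sigma(\G,\A') \le c^2 \sigma(\G,\A) + d(c^2-1)$ — is where the additive $d\big((1+\tfrac{Mr}{2})^4 - 1\big)$ term and the fourth power arise. I'd aim to show $\sigma(\Q,\nabla^2 f(\x_+)) \le (1+\tfrac{Mr}{2})^4 \sigma(\B,\nabla^2 f(\x)) + d(1+\tfrac{Mr}{2})^4 - d$.

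\emph{Step 3: Apply the greedy contraction.} With $\Q \succeq \nabla^2 f(\x_+)$ established and $\mu\I \preceq \nabla^2 f(\x_+) \preceq L\I$ from strong convexity and smoothness, Lemma \ref{one_greedy} applies directly to $\B_+ = \bfgs{\Q, \nabla^2 f(\x_+), \bar{\u}(\Q,\nabla^2 f(\x_+))}$, yielding $\B_+ \succeq \nabla^2 f(\x_+)$ and $\sigma(\B_+,\nabla^2 f(\x_+)) \le (1-\tfrac{\mu}{dL})\sigma(\Q,\nabla^2 f(\x_+))$. Substituting the Step 2 bound gives exactly the claimed inequality.

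\emph{Main obstacle.} The delicate part is Step 2 — the bookkeeping of reference-matrix changes in the $\sigma$-potential. One must be careful that the monotonicity of classical BFGS is stated/used with respect to the correct reference matrix, and that each swap among $\nabla^2 f(\x)$, $\K$, $(1+\tfrac{Mr}{2})\K$, $\P$, $\nabla^2 f(\x_+)$ is justified by the precise two-sided bounds of Lemma \ref{integral_lemma}; getting the exponent to come out as $(1+\tfrac{Mr}{2})^4$ rather than a larger power requires choosing the scaling factor $(1+\tfrac{Mr}{2})^2$ in $\P$ precisely so that some of the inflation is ``pre-paid.'' The positive-definiteness chain in Steps 1 and 3 is routine given Lemmas \ref{boundedness} and \ref{integral_lemma}.
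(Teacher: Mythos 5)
Your proposal is correct and follows essentially the same route as the paper: the same PSD chain $\P \succeq (1+\tfrac{Mr}{2})\K$, $\Q \succeq \nabla^2 f(\x_+)$ via Lemmas \ref{integral_lemma} and \ref{boundedness}, the same intermediate bound $\sigma(\Q,\nabla^2 f(\x_+)) \le (1+\tfrac{Mr}{2})^4\sigma(\B,\nabla^2 f(\x)) + d(1+\tfrac{Mr}{2})^4 - d$ obtained from BFGS $\sigma$-monotonicity with reference $(1+\tfrac{Mr}{2})\K$ plus reference swaps, and then Lemma \ref{one_greedy} for the $(1-\tfrac{\mu}{dL})$ contraction. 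Only a minor note: part 2 of Lemma \ref{boundedness} is already stated for an arbitrary update direction $\u \neq \zero$, so no separate ``classical analogue'' needs to be invoked.
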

\begin{proof}
	We begin by analyzing the first BFGS update. Since \(\B \succeq \nabla ^ 2 f(\x)\), we have the following:
	\begin{align}\nn
		\B \succeq \nabla ^ 2 f(\x) \overset{\text{def}}{\implies} \P \succeq \big(1 + \frac{Mr}{2}\big) ^ 2 f(\x) \stackrel{\Lem. \ref{integral_lemma}}\succeq \big(1 + \frac{Mr}{2}\big)\K.
	\end{align}
	Since \(\P \succeq \big(1 + \frac{Mr}{2}\big)\K\), the metric \(\sigma\big(\P, (1 + \frac{Mr}{2})\K\big)\) is well defined. Applying Lemma \ref{boundedness}, we obtain \begin{align*}
		\Q = \bfgs{\P, \big(1 + \frac{Mr}{2}\big)\K, \x_{+} - \x}\succeq \big(1 + \frac{Mr}{2}\big)\K.
	\end{align*}
	Applying Lemma \ref{integral_lemma} to relate \(\K\) and \(\nabla ^ 2 f(\x_{+})\), we obtain \begin{align*}
		\Q \succeq \big(1 + \frac{Mr}{2}\big) \K \stackrel{\Lem. \ref{integral_lemma}}\succeq \nabla ^ 2 f(\x_{+}).
	\end{align*}
	We now begin analyzing the second BFGS update. Since \(\Q \succeq \nabla ^ 2 f(\x_{+})\), applying Lemma \ref{boundedness}, we obtain \begin{align}\nn
		\B_{+} = \bfgs{\Q, \nabla ^ 2 f(\x_{+}), \bar{\u}(\Q, \nabla ^ 2 f(\x_{+})} \succeq \nabla ^ 2 f(\x_{+}),
	\end{align}
	which completes the proof of the first part.
	Applying Lemma \ref{one_greedy}, we obtain
	\begin{align*}
		\sigma(\B_+, \nabla ^ 2 f(\x_+)) 
		\leq                             
		\left(                           
		1 - \tfrac{\mu}{dL}              
		\right)                          
		\sigma (\Q, \nabla ^ 2 f(\x_+)).  
	\end{align*}
	Define $c \coloneqq \left(
	1 - \tfrac{\mu}{dL}
	\right)$ for brevity.
	We are now ready to show the second result. Observe that
	\begin{align}
		\sigma(\B_+, \nabla ^ 2 f(\x_+)) 
		& \leq (1 - c)                                                                          
		\sigma (\Q, \nabla ^ 2 f(\x_+)), \nn 
		\\
		& = (1 - c) (\ip{\nabla ^ 2 f(\x_+) ^ {-1}}{\Q} - d) \nn,                               \\
		&\overset{(a)}{\leq} (1 - c) \big(\big(1 + \tfrac{Mr}{2}\big)\ip{\K ^ {-1}}{\Q} - d\big) \nn,         \\
		&\stackrel{(b)}{=} (1 - c) \big(\big(1 + \tfrac{Mr}{2}\big) ^ 2\ip{\tilde{\K} ^ {-1}}{\Q} - d\big), \label{continue_here}
	\end{align}
	where \((a)\) follows since
	$$
	\big(1 + \tfrac{Mr}{2}\big) \nabla ^ 2 f(\x_+)
	\stackrel{\Lem. \ref{integral_lemma}}\succeq \K
	\iff \nabla ^ 2 f(\x_{+}) ^ {-1} \preceq \big(1 + \frac{Mr}{2}\big) \K ^ {-1}.$$ In \((b)\), we have defined \(\tilde{\K} \coloneqq \big(1 + \frac{Mr}{2}\big)\K\). Recall that we have already established \(\P \succeq \tilde{\K}\). Applying Lemma \ref{boundedness}, we obtain \begin{align*}
		\sigma(\Q, \tilde{\K}) \leq \sigma(\P, \tilde{\K}) \iff \sigma(\Q, \K) \leq \sigma(\P, \K).
	\end{align*}
	Continuing from \eqref{continue_here}, we obtain 
	\begin{align}
		\sigma(\B_+, \nabla ^ 2 f(\x_+)) & \leq (1 - c) \big(\big(1 + \tfrac{Mr}{2}\big) ^ 2\ip{\tilde{\K} ^ {-1}}{\P} - d\big)\nn,        \\
		& = (1 - c) \big(\big(1 + \tfrac{Mr}{2}\big) ^ 4\ip{\tilde{\K} ^ {-1}}{\B} - d\big)\nn,           \\
		& = (1 - c)\big(\big(1 + \tfrac{Mr}{2}\big) ^ 3\ip{{\K} ^ {-1}}{\B} - d\big)\nn,                  \\
		& \overset{(a)}{\leq} (1 - c)\big(\big(1 + \tfrac{Mr}{2}\big) ^ 4\ip{{\nabla ^ 2 f(\x)} ^ {-1}}{\B} - d\big), 
	\end{align}
	where \((a)\) follows from 
	$
	\K ^ {-1}
	\preceq \big(1 + \tfrac{Mr}{2}\big) \nabla ^ 2 f(\x) ^ {-1}
	$ (\(\because\) Lemma \ref{integral_lemma}). Finally, setting \(\ip{{\nabla ^ 2 f(\x)} ^ {-1}}{\B} = \sigma(\B, \nabla ^ 2 f(\x)) + d\) completes the proof.
\end{proof}

\begin{corollary}\label{cor:imp}
	Under the notation established in Lemma \ref{lem:unit_step_dec}, let
	\(\alpha \in \mathbb{R}_+\) be an upper bound on \(r\). Then, we have the following:\begin{align}\nn
		\sigma(\B_+, \nabla ^ 2 f(\x_+)) \leq (1 - \tfrac{\mu}{dL})e ^ {2M\alpha}\big( \sigma(\B, \nabla ^ 2 f(\x)) + 2Md \alpha\big). 
	\end{align}
\end{corollary}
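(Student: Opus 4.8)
The plan is to derive Corollary \ref{cor:imp} directly from the bound proved in Lemma \ref{lem:unit_step_dec} by substituting the upper bound $\alpha$ for $r$ and then simplifying the resulting expression using elementary inequalities. Recall that Lemma \ref{lem:unit_step_dec} gives
\[
\sigma(\B_+, \nabla^2 f(\x_+)) \leq \big(1 - \tfrac{\mu}{dL}\big)\Big(\big(1 + \tfrac{Mr}{2}\big)^4 \sigma(\B, \nabla^2 f(\x)) + d\big(1 + \tfrac{Mr}{2}\big)^4 - d\Big).
\]
Since $r \leq \alpha$ and the function $t \mapsto (1 + \tfrac{Mt}{2})^4$ is increasing in $t \geq 0$, and since $\sigma(\B, \nabla^2 f(\x)) \geq 0$ (as $\B \succeq \nabla^2 f(\x)$), we may replace every occurrence of $r$ by $\alpha$ to obtain
\[
\sigma(\B_+, \nabla^2 f(\x_+)) \leq \big(1 - \tfrac{\mu}{dL}\big)\Big(\big(1 + \tfrac{M\alpha}{2}\big)^4 \sigma(\B, \nabla^2 f(\x)) + d\big(\big(1 + \tfrac{M\alpha}{2}\big)^4 - 1\big)\Big).
\]

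The remaining work is to control $\big(1 + \tfrac{M\alpha}{2}\big)^4$ from above by $e^{2M\alpha}$, and to control $\big(1 + \tfrac{M\alpha}{2}\big)^4 - 1$ from above by $2M\alpha \, e^{2M\alpha}$ (so that the $d$-term becomes $2Md\alpha \, e^{2M\alpha}$, which combines with the first term to yield the stated $e^{2M\alpha}(\sigma(\B, \nabla^2 f(\x)) + 2Md\alpha)$). For the first inequality I would use $1 + x \leq e^{x}$ with $x = \tfrac{M\alpha}{2}$, raised to the fourth power, giving $\big(1 + \tfrac{M\alpha}{2}\big)^4 \leq e^{2M\alpha}$. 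For the second, I would write $\big(1 + \tfrac{M\alpha}{2}\big)^4 - 1 = \tfrac{M\alpha}{2}\big(\big(1+\tfrac{M\alpha}{2}\big)^3 + \big(1+\tfrac{M\alpha}{2}\big)^2 + \big(1+\tfrac{M\alpha}{2}\big) + 1\big) \leq \tfrac{M\alpha}{2} \cdot 4\big(1 + \tfrac{M\alpha}{2}\big)^3 = 2M\alpha\big(1+\tfrac{M\alpha}{2}\big)^3 \leq 2M\alpha\, e^{2M\alpha}$, using $a^4 - 1 = (a-1)(a^3+a^2+a+1)$ with $a = 1 + \tfrac{M\alpha}{2} \geq 1$ so that each of the four summands is at most $a^3$, and then $a^3 \leq a^4 \leq e^{2M\alpha}$.

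Putting these together,
\[
\sigma(\B_+, \nabla^2 f(\x_+)) \leq \big(1 - \tfrac{\mu}{dL}\big)\Big(e^{2M\alpha}\sigma(\B, \nabla^2 f(\x)) + 2Md\alpha\, e^{2M\alpha}\Big) = \big(1 - \tfrac{\mu}{dL}\big) e^{2M\alpha}\big(\sigma(\B, \nabla^2 f(\x)) + 2Md\alpha\big),
\]
which is exactly the claimed bound. I do not anticipate a genuine obstacle here: the result is a routine relaxation of Lemma \ref{lem:unit_step_dec}, and the only mild subtlety is making sure the two elementary bounds are applied with enough slack that the $d$-dependent correction term collapses into the clean $2Md\alpha$ factor; choosing the factorization $a^4 - 1 = (a-1)(a^3+a^2+a+1)$ and bounding each summand by $a^3$ (rather than attempting a tighter estimate) is what makes the constants line up.
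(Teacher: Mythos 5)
Your proposal is correct and follows essentially the same route as the paper: both start from Lemma \ref{lem:unit_step_dec}, replace $r$ by its upper bound $\alpha$, and use $1+x\leq e^{x}$ to get the $e^{2M\alpha}$ factor. The only cosmetic difference is in the $d$-term: the paper writes it as $d\big(1-(1+\tfrac{M\alpha}{2})^{-4}\big)$ and applies $1-e^{-x}\leq x$, whereas you bound $(1+\tfrac{M\alpha}{2})^4-1$ via the factorization $a^4-1=(a-1)(a^3+a^2+a+1)$; both yield the same $2Md\alpha$ correction.
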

\begin{proof}
	
	From Lemma \ref{lem:unit_step_dec}, we have \begin{align}
		\sigma(\B_+, \nabla ^ 2 f(\x_+)) &\leq \big(1 - \tfrac{\mu}{dL}\big)\big(1 + \tfrac{M\alpha}{2}\big) ^ 4\big( \sigma(\B, \nabla ^ 2 f(\x)) + d\big(1 - \tfrac{1}{\big(1 + \tfrac{M\alpha}{2}\big)^4})) \nn, \\
		&\stackrel{(a)}\leq \big(1 - \tfrac{\mu}{dL}\big)e ^ {2M\alpha}\big( \sigma(\B, \nabla ^ 2 f(\x)) + d(1 - e ^ {-2M\alpha})\big)\nn, \\
		&\stackrel{(b)}\leq \big(1 - \tfrac{\mu}{dL}\big)e ^ {2M\alpha}\big( \sigma(\B, \nabla ^ 2 f(\x)) + 2Md\alpha\big)\nn,
	\end{align}
	where \((a)\) follows by the inequality \(1 + x \leq e ^ {x}, \forall x \in \mathbb{R}\) and \((b)\) follows from the inequality \(1 - e ^ {-x} \leq x, \forall x > 0\). This completes the proof.
\end{proof}

\begin{lemma}\label{lem:sig}
	Let \(f \colon \mathbb{R}^d \rightarrow \mathbb{R}\) be a real valued function
	that is \(\mu\)-strongly convex, \(L\)-smooth, and \(M\)-strongly self-concordant.
	Let $\tilde{\x} \in \mathbb{R}^d$ be some fixed vector and $0 \le \gamma < 1$ be some fixed constant such that the sequence \(\{\x^{k}\}\), for all \(k \in [T]\) satisfies\begin{align}\nn
		\norm{\x^{k} - \tilde{\x}} \leq {\gamma} ^ {k} \norm{\x^{0} - \tilde{\x}}.
	\end{align}
	Define the constant \(r_{k} := \norm{\x^{k} -\x^{k - 1}}_{\x^{k - 1}}\) for every $k$. 
	Let \(\B^{0}\) be a matrix such that it satisfies \(\B^0 \succeq \nabla ^ 2 f(\x^0)\). Consider the following BFGS
	updates: \begin{align}\Q^k &:= \bfgs{\P^{k - 1}, (1 + \tfrac{Mr_k}{2})\K^k, \x^{k} - \x^{k - 1}},\nn \\
		\B^k &:= \bfgs{\Q ^ k, \nabla ^ 2 f(\x^{k}), \bar{\u}(\Q^k, \nabla ^ {2} f(\x^k))},\nn\end{align} where \(\P^{k - 1} := \big(1 + \tfrac{Mr_k}{2}\big) ^ 2 \B^{k - 1}\), \(\K^k := \int_{0} ^ {1} \nabla ^ 2 f(\x^{k - 1} + \tau (\x^{k} - \x^{k - 1})) d\tau\), and \(\bar{\u}(\Q ^ k, \nabla ^ 2 f(\x ^ k))\) is the greedy vector \ref{greedy_vec}. Then, the following holds for all \(k \in [T] \):\begin{align}
		\sigma(\B^{k}, \nabla ^ 2 f(\x ^ k)) \leq (1 - \tfrac{\mu}{dL}) ^ {k} e ^ {\tfrac{4M\sqrt{L}||{\x^0 - \tilde{\x}}||}{1 - \gamma}} \bigg(\sigma(\B^{0},\nabla ^ 2 f(\x^0)) +  \norm{\x^0 - \tilde{\x}} \tfrac{4Md\sqrt{L}}{1 - (1 - \tfrac{\mu}{dL})^{-1}\gamma}\bigg).
	\end{align}
\end{lemma}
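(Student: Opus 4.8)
The plan is to iterate Corollary \ref{cor:imp} over the $k$ steps and carefully control the accumulation of the multiplicative and additive error terms. First I would observe that each step of the BFGS update chain described in the statement is exactly an instance of the setup in Lemma \ref{lem:unit_step_dec} / Corollary \ref{cor:imp}, with $\x \leftarrow \x^{k-1}$, $\x_+ \leftarrow \x^{k}$, $\B \leftarrow \B^{k-1}$, $\B_+ \leftarrow \B^{k}$, and $r \leftarrow r_k$. In particular, the first part of Lemma \ref{lem:unit_step_dec} gives $\B^{k} \succeq \nabla^2 f(\x^k)$ inductively, so the $\sigma$ quantities are all well-defined, and Corollary \ref{cor:imp} gives, with any upper bound $\alpha_k \ge r_k$,
\begin{align}\nn
\sigma(\B^k, \nabla^2 f(\x^k)) \le \big(1 - \tfrac{\mu}{dL}\big) e^{2M\alpha_k}\big(\sigma(\B^{k-1}, \nabla^2 f(\x^{k-1})) + 2Md\alpha_k\big).
\end{align}

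The key quantitative input is a bound on $r_k = \norm{\x^k - \x^{k-1}}_{\x^{k-1}}$. Using $L$-smoothness, $r_k \le \sqrt{L}\,\norm{\x^k - \x^{k-1}} \le \sqrt{L}\big(\norm{\x^k - \tilde\x} + \norm{\x^{k-1} - \tilde\x}\big) \le \sqrt{L}(\gamma^k + \gamma^{k-1})\norm{\x^0 - \tilde\x} \le 2\sqrt{L}\,\gamma^{k-1}\norm{\x^0 - \tilde\x}$. So I would set $\alpha_k := 2\sqrt{L}\,\gamma^{k-1}\norm{\x^0 - \tilde\x}$. Then I unroll the recursion: writing $\sigma_k := \sigma(\B^k, \nabla^2 f(\x^k))$ and $c := 1 - \tfrac{\mu}{dL}$,
\begin{align}\nn
\sigma_k \le c^k \Big(\prod_{j=1}^{k} e^{2M\alpha_j}\Big)\sigma_0 + \sum_{j=1}^{k} c^{\,k-j+1}\Big(\prod_{i=j}^{k} e^{2M\alpha_i}\Big) 2Md\alpha_j.
\end{align}
For the product of exponentials I bound $\prod_{j=1}^{k} e^{2M\alpha_j} = \exp\big(2M\sum_{j=1}^k \alpha_j\big) \le \exp\big(2M \cdot 2\sqrt{L}\norm{\x^0-\tilde\x}\sum_{j\ge 0}\gamma^j\big) = \exp\big(\tfrac{4M\sqrt{L}\norm{\x^0-\tilde\x}}{1-\gamma}\big)$, which is exactly the prefactor appearing in the claim; the same bound dominates every partial product $\prod_{i=j}^k e^{2M\alpha_i}$.

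For the additive sum I factor out $c^{k}$ and the exponential prefactor, leaving $\sum_{j=1}^{k} c^{\,1-j}\cdot 2Md\alpha_j = 2Md\cdot 2\sqrt{L}\norm{\x^0-\tilde\x}\sum_{j=1}^{k} c^{\,1-j}\gamma^{j-1} = 4Md\sqrt{L}\norm{\x^0-\tilde\x}\sum_{j=0}^{k-1}(c^{-1}\gamma)^{j}$, and since $c^{-1}\gamma = (1-\tfrac{\mu}{dL})^{-1}\gamma < 1$ by hypothesis this geometric series is at most $\tfrac{1}{1-(1-\mu/(dL))^{-1}\gamma}$. Collecting the two contributions yields
\begin{align}\nn
\sigma_k \le c^{k} e^{\frac{4M\sqrt{L}\norm{\x^0-\tilde\x}}{1-\gamma}}\Big(\sigma_0 + \norm{\x^0 - \tilde\x}\tfrac{4Md\sqrt{L}}{1-(1-\mu/(dL))^{-1}\gamma}\Big),
\end{align}
which is the stated bound. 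The main obstacle I anticipate is purely bookkeeping in the unrolled recursion — making sure the geometric sum of $\alpha_j$'s telescopes to the factor $\tfrac{1}{1-\gamma}$ inside the exponent while the second geometric sum (which carries the extra $c^{-1}$ per step) telescopes to $\tfrac{1}{1-c^{-1}\gamma}$, and that the hypothesis $\gamma < 1-\tfrac{\mu}{dL}$ is precisely what keeps the latter convergent; one must also be slightly careful that the one-step bound requires an upper bound on $r_k$ valid for each $k \in [T]$, which the smoothness estimate supplies uniformly.
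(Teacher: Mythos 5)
Your proposal is correct and follows essentially the same route as the paper's own proof: invoke Lemma \ref{lem:unit_step_dec}/Corollary \ref{cor:imp} stepwise (which also gives $\B^k \succeq \nabla^2 f(\x^k)$), bound $r_k \le 2\sqrt{L}\gamma^{k-1}\norm{\x^0-\tilde\x}$ via smoothness and the triangle inequality, unroll the resulting recursion, and control the two geometric sums by $\tfrac{1}{1-\gamma}$ in the exponent and $\tfrac{1}{1-(1-\mu/(dL))^{-1}\gamma}$ in the additive term. The only caveat, shared with the paper, is that the last geometric bound tacitly needs $\gamma < 1-\tfrac{\mu}{dL}$ (not literally among the lemma's stated hypotheses, but satisfied in all its applications where $\gamma=\rho$).
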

\begin{proof}
	
	From Lemma \ref{lem:unit_step_dec}, it can be shown that \(\B^k \succeq \nabla ^ 2 f(\x^k)\) for \(k = 1, \dots, T\). Therefore, \(\sigma(\B^k, \nabla ^ 2 f(\x^k))\) is well defined. We introduce the notation \(\sigma_k := \sigma(\B^{k}, \nabla ^ 2 f(\x^k))\), \(d_{k} := \norm{\x^k - \tilde{\x}}\), and \(c := \tfrac{\mu}{dL}\) for simplicity. 
	
	To apply Corollary \ref{cor:imp}, we need an upper bound on \(r_{k}\). This is trivial via the Triangle inequality \begin{align}
		r_{k} = \norm{\x^{k} - \x^{k - 1}}_{\x^{k - 1}} &\stackrel{\Delta}\leq \norm{\x^{k} - \tilde{\x}}_{\x^{k - 1}} + \norm{\x^{k - 1} - \tilde{\x}}_{\x^{k - 1}} \nn \\
		&\stackrel{(a)}\leq \sqrt{L}\big(\norm{\x^{k} -\tilde{\x}} + \norm{\x^{k - 1} - \tilde{\x}}\big) \leq 2\sqrt{L}\gamma ^ {k - 1} \norm{\x^{0}- \tilde{\x}}, \nn
	\end{align}
	where \((a)\) follows since \(\nabla ^ 2 f(\x) \preceq LI\). Therefore \(\alpha_{k} = 2\sqrt{L} \gamma ^ {k - 1}\norm{\x^{0}- \tilde{\x}}\) is an upper bound on \(r_{k}\). Applying Corollary \ref{cor:imp}, we obtain \begin{align}
		\sigma_{k} &\leq (1 - c) e ^ {2M\alpha_k} (\sigma_{k - 1} + 2Md\alpha_{k}) \nn \\
		&\leq (1 - c) e ^ {4M\sqrt{L}d_0\gamma ^ {k - 1}} (\sigma_{k - 1} + 4Md\sqrt{L}d_0\gamma ^ {k - 1})\label{recursion}.
	\end{align}
	We solve the recursion \eqref{recursion} as follows:\begin{align}
		\sigma_{k} & \leq (1 - c) e ^ {4M\sqrt{L}d_0\gamma ^ {k - 1}} (\sigma_{k - 1} + 4Md\sqrt{L}d_0\gamma ^  { k -1}),\nn \\
		& \leq (1 - c) ^ 2 e ^ {4M\sqrt{L}d_0(\gamma ^ {k - 1} + \gamma ^ {k - 2})}\sigma_{k - 2} + 4Md\sqrt{L}d_0 \big(\gamma ^ {k - 2} (1 - c) ^ 2e ^ {4M\sqrt{L}d_0 (\gamma ^ {k - 1} + \gamma ^ {k - 2})} \nn \\
		&\hspace{80mm} + \gamma ^ {k - 1}(1 - c) e ^ {4M\sqrt{L}d_0\gamma ^ {k - 1}}\big),\nn\\ 
		& \leq (1 - c) ^ {k} e^ {4M\sqrt{L}d_0 \sum_{j = 0} ^ {k - 1} \gamma ^ j} \sigma_0 + 4Md\sqrt{L}d_0 \big(\sum_{j = 0} ^ {k - 1} \gamma ^ {j}(1 - c) ^ {k - j}e ^ {4M\sqrt{L}d_0 \sum_{i = 1} ^ {k  - j}\gamma ^ {k - i}}\big),\nn \\
		&\leq 	(1 - c) ^ {k} e^ {4M\sqrt{L}d_0 \sum_{j = 0} ^ {\infty} \gamma ^ j}\sigma_0 + 4Md\sqrt{L}d_0 e^{4M\sqrt{L}d_0 \sum_{i = 0} ^ {\infty} \gamma ^ {i}} \sum_{j = 0} ^ {k - 1} \gamma ^ {j} (1 - c) ^ {k - j}\nn, \\
		& \leq (1 - c) ^ {k} e ^ {\tfrac{4M\sqrt{L}d_0}{1 - \gamma}} \big(\sigma_0 + \tfrac{4Md\sqrt{L}d_0}{1 - \tfrac{\gamma}{1 - c}}\big)\nn.
	\end{align}
	This completes the proof.
\end{proof}

\begin{rem}\label{change_r}
	It can be concluded from the proof of Lemma \ref{lem:sig} that redefining \(r_k := 2\sqrt{L}\gamma ^ {k - 1}d_0\) (which is an upper bound on \(\norm{\x ^ k - \x ^ {k - 1}}_{\x ^ {k - 1}}\)), the results of Lemma \ref{lem:sig} remain unchanged.
\end{rem}


\section{EFFICIENT IMPLEMENTATION OF IQN} \label{app:iqn_efficient_implementation}
Recall that, that at time \(t\), IQN updates \begin{align}\nn
	\B_{i_t} ^ t = \bfgs{\B_{i_t} ^ {t - 1}, \K ^ {t}, \z_{i_t} ^ t - \z_{i_t} ^ {t - 1}}.
\end{align}

Define \(\bi{\phi} ^ {t} := \sum_{i = 1} ^ {n} \B_{i} ^ {t}\z_{i} ^ {t} - \sum_{i = 1} ^ {n} \nabla f_{i}(\z_{i} ^ {t})\). At time \(t\), since IQN only updates the tuple with index \(i_t\), we have \begin{align}\label{eff_update}
	\bi{\phi} ^ t = \bi{\phi} ^ {t - 1} + \big(\B_{i_t} ^ {t}\z_{i_t} ^ t - \B_{i_t} ^ {t - 1}\z_{i_t} ^ {t - 1}\big) - \big(\nabla f_{i_t}(\z_{i_t} ^ {t}) - \nabla f_{i_t}(\z_{i_t} ^ {t - 1})\big).
\end{align}
Therefore, given access to \(\bi{\phi} ^ {t - 1}\), we can compute \(\bi{\phi} ^ {t}\) in \(\cO(d ^ 2)\) time. This updating scheme can be implemented iteratively, where we only evaluate \(\bi{\phi} ^ 0\) explicitly and evaluate \(\bi{\phi} ^ t\), for all \(t \geq 1\) by \eqref{eff_update}. It only remains to compute \(\big(\bar{\B} ^ t\big) ^ {-1}\), where \(\bar{\B} ^ {t} := \big(\sum_{i = 1} ^ {n} \B_{i} ^ {t}\big)\). This can be done by applying the Sherman-Morrison formula \eqref{shermon} twice to the matrix on the right \eqref{B_update_IQN}. \begin{align}\label{B_update_IQN}
	\bar{\B} ^ t = \bar{\B} ^ {t - 1} + \B_{i_t} ^ t - \B_{i_t} ^ {t - 1} = \bar{\B} ^ {t - 1} + \frac{{\y_{i_t} ^ t}{\y_{i_t} ^ t} ^ \T}{\ip{{\y_{i_t} ^ t}}{{\s_{i_t} ^ t}}} - \frac{\B_{i_t} ^ {t - 1}{\s_{i_t} ^ t}{\s_{i_t} ^ t} ^ {\T}\B_{i_t} ^ {t - 1}}{\ip{{\s_{i_t} ^ t}}{\B_{i_t} ^ {t - 1}{\s_{i_t} ^ t}}},
\end{align}
where \(\y_{i_t} ^ t = \z_{i_t} ^ t - \z_{i_t} ^ {t - 1}, \s_{i_t} ^ t = \nabla f_{i_t}(\z_{i_t} ^ t) - \nabla f_{i_t}(\z_{i_t} ^ {t - 1})\).
Applying\eqref{shermon} twice, we get \eqref{first_step} and \eqref{second_step}
\begin{align}\label{first_step}
	(\bar{\B} ^ {t}) ^ {-1} = {\Z} ^ {t} + \frac{{\Z} ^ t (\B_{i_t} ^ {t - 1}{\s_{i_t} ^ t})(\B_{i_t} ^ {t - 1}{\s_{i_t} ^ t}) ^ {\T}{\Z} ^ t}{\ip{{\s_{i_t} ^ t}}{\B_{i_t} ^ {t - 1}{\s_{i_t} ^ t}} - \ip{(\B_{i_t} ^ {t - 1}{\s_{i_t} ^ t})}{\Z ^ t(\B_{i_t} ^ {t - 1}{\s_{i_t} ^ t})}},
\end{align}
where \(\Z^t\) is given by \begin{align}\label{second_step}
	\Z ^ t = (\bar{\B} ^ {t - 1}) ^ {-1} -\frac{(\bar{\B} ^ {t - 1}) ^ {-1}{\y_{i_t} ^ t}{\y_{i_t} ^ t} ^ \T(\bar{\B} ^ {t - 1}) ^ {-1}}{\ip{{\y_{i_t} ^ t}}{{\s_{i_t} ^ t}} + \ip{{\y_{i_t} ^ t}}{(\bar{\B} ^ {t - 1}) ^ {-1}{\y_{i_t} ^ t}}}. 
\end{align}

By iteratively implementing this scheme, we only need to compute \((\bar{\B} ^ {0}) ^ {-1}\) explicitly and \((\bar{\B} ^ {t}) ^ {-1}\), for all \(t \geq 1\) by the Sherman-Morrison formula. 


\section{LOW MEMORY IMPLEMENTATION OF INCREMENTAL METHODS}
\label{app:low_memory_implementation}
In this section, we illustrate how incremental methods can be effectively implemented for large-scale real-world scenarios which are characterized by substantial memory requirements of $\mathcal{O}(nd^2)$.
Our solution leverages the disk, which offers significantly larger storage capacity compared to main memory but comes with an increased load-store latency. To mitigate this latency issue, we employ a pipelining scheme. In this scheme, we partition the data into blocks and simultaneously run compute operations on one block while performing load-store operations on the blocks adjacent to it. This parallelization effectively extends the main memory capacity to the available the disk size, all the while avoiding its larger latency.

Formally, let the available main memory capacity be $g$ GB, the number of data samples be $n$, the dimensionality of the data be $d$, and the space requirement for each sample be $s$. We assume that the disk is sufficiently large to store the data for all samples, that it the size of the disk is greater than $ns$.
We divide the data into $m=\frac{2ns}{g}$ blocks, denoted as $b_i$ for $i\in[m]$. This choice of $m$ ensures that two blocks can be accommodated in memory simultaneously.
The processing proceeds as follows:

\begin{enumerate}[itemsep=2pt,parsep=3pt,topsep=3pt]
    \item We assume that the memory holds blocks $b_1$ and $b_2$, along with the global memoized quantities for SLIQN. All data blocks are also stored on the disk.
    \item At iteration $t=1$, we process the block $b_1$ by executing the corresponding algorithm updates on it.
    \item At any iteration $t > 1$, we execute the algorithm updates on $b_{i\%n+1}$ while concurrently storing the already processed block $b_i$ back into the disk and loading the block $b_{(i+1)\%n+1}$ into memory to be processed next.
\end{enumerate}

In practice, modern disks have access speeds of around 500 MBps, making processing the bottleneck in this parallel architecture, rather than disk access. For example, in our implementation of SLIQN with $g=1200$ MB, $n=20,000$, $d=123$, $s=0.1117$ MB, and $m=4$, we observed that processing one block took 7.8 seconds, while the load-store operation required only 3.8 seconds.


\section{CONVERGENCE ANALYSIS OF SIQN}
\label{app:convergence_iqn}
In this section, we provide the convergence analysis of SIQN, as the convergence analysis motivates the replacement of \(\beta_t\) with \(\alpha_{\ceil{\frac{t}{n} - 1}}\). We begin by showing that at each time \(t\), the matrix \(\Q ^ t\) obtained after the first BFGS update satisfies \(\Q ^ t \succeq  \nabla ^ 2 f_{i_t}(\z_{i_t} ^ t)\). Using this, we show that the updated Hessian approximation satisfies \(\B_{i_t} ^ t \succeq \nabla ^ 2 f_{i_t}(\z_{i_t} ^ t)\). These observations are essential in order ensure that \(\sigma(\Q ^ t, \nabla ^ 2 f_{i_t}(\z_{i_t} ^ t))\) and  \(\sigma(\B ^ t, \nabla ^ 2 f_{i_t}(\z_{i_t} ^ t))\) are well defined.
\begin{lemma}\label{lem:formalize_psdness}
	For all \(t \ge 1\) the following hold true: \begin{align}
		\label{Q_lem}\Q ^ t \succeq \nabla ^ 2 f_{i_t}(\z_{i_t} ^ t), \\
		\label{B_lem}\B_{i} ^ t \succeq \nabla ^ 2 f_{i}(\z_{i} ^ t),
	\end{align}
	for all \(i \in [n]\).
\end{lemma}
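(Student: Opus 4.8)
The plan is to prove the two claims \eqref{Q_lem} and \eqref{B_lem} simultaneously by induction on $t$, propagating the invariant $\B_i^t \succeq \nabla^2 f_i(\z_i^t)$ for all $i \in [n]$ across iterations. The base case $t=0$ is immediate from the initialization in Algorithm \ref{alg:h_iqn}, which explicitly requires $\B_i^0 \succeq \nabla^2 f_i(\z_i^0)$ for all $i$. For the inductive step, assume the invariant holds at time $t-1$. We must show that after the SIQN update \eqref{modified_second_step_preprint} the invariant persists at time $t$.

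The key observation is that the single-iteration update of SIQN is exactly the pair of BFGS updates analyzed in Lemma \ref{lem:unit_step_dec}, with the identifications $f \leftarrow f_{i_t}$, $\x \leftarrow \z_{i_t}^{t-1}$, $\x_+ \leftarrow \z_{i_t}^t = \x^t$, $r \leftarrow \norm{\z_{i_t}^t - \z_{i_t}^{t-1}}_{\z_{i_t}^{t-1}}$ so that $\beta_t = \tfrac{Mr}{2}$, $\B \leftarrow \B_{i_t}^{t-1}$, $\P \leftarrow (1+\beta_t)^2 \B_{i_t}^{t-1}$, $\K \leftarrow \K^t$, $\Q \leftarrow \Q^t$, and $\B_+ \leftarrow \B_{i_t}^t$. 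By the inductive hypothesis, $\B_{i_t}^{t-1} \succeq \nabla^2 f_{i_t}(\z_{i_t}^{t-1})$, which is precisely the hypothesis $\B \succeq \nabla^2 f(\x)$ needed to invoke Lemma \ref{lem:unit_step_dec}. That lemma's proof establishes, as an intermediate step, exactly $\Q^t \succeq \nabla^2 f_{i_t}(\z_{i_t}^t)$ (this gives \eqref{Q_lem}), and then concludes $\B_+ \succeq \nabla^2 f(\x_+)$, i.e. $\B_{i_t}^t \succeq \nabla^2 f_{i_t}(\z_{i_t}^t)$. For the indices $j \neq i_t$, the update rule keeps $\z_j^t = \z_j^{t-1}$ and $\B_j^t = \B_j^{t-1}$, so $\B_j^t = \B_j^{t-1} \succeq \nabla^2 f_j(\z_j^{t-1}) = \nabla^2 f_j(\z_j^t)$ by the inductive hypothesis. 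This establishes \eqref{B_lem} for all $i \in [n]$ and closes the induction.

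The only genuine subtlety — and the step I expect to require the most care — is verifying that the self-concordance chain of inequalities inside Lemma \ref{lem:unit_step_dec} applies with $f = f_{i_t}$: one needs that each $f_i$ is $M$-strongly self-concordant, which the excerpt records as following from Assumptions \ref{ass:smooth_strong} and \ref{ass:lip_hessian} via \cite[Ex.~4.1]{rodomanov2021greedy}, and that the chain of $\succeq$ relations ($\P \succeq (1+\beta_t)\K^t$ by Lemma \ref{integral_lemma}, then $\Q^t \succeq (1+\beta_t)\K^t$ by Lemma \ref{boundedness}, then $(1+\beta_t)\K^t \succeq \nabla^2 f_{i_t}(\z_{i_t}^t)$ again by Lemma \ref{integral_lemma}) threads together correctly with $r = \norm{\z_{i_t}^t - \z_{i_t}^{t-1}}_{\z_{i_t}^{t-1}}$. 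Since SIQN uses the exact scaling $\beta_t$ (rather than the over-estimate $\alpha_{\ceil{t/n-1}}$ of SLIQN), the constant $(1+\beta_t)^2$ is the \emph{tightest} one making $(1+\beta_t)^2\B_{i_t}^{t-1} \succeq (1+\beta_t)\K^t$ hold given only $\B_{i_t}^{t-1}\succeq \nabla^2 f_{i_t}(\z_{i_t}^{t-1})$, so no slack is lost and Lemma \ref{lem:unit_step_dec} applies verbatim. Everything else is bookkeeping over the cyclic index $i_t$.
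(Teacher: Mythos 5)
Your proof is correct and takes essentially the same route as the paper: induction on $t$, with the inductive step resting on the chain $(1+\beta_t)^2\B_{i_t}^{t-1}\succeq(1+\beta_t)\K^t$ (Lemma \ref{integral_lemma}), $\Q^t\succeq(1+\beta_t)\K^t$ (Lemma \ref{boundedness}), $\Q^t\succeq\nabla^2 f_{i_t}(\z_{i_t}^t)$ (Lemma \ref{integral_lemma}), and $\B_{i_t}^t\succeq\nabla^2 f_{i_t}(\z_{i_t}^t)$ (Lemma \ref{boundedness}), with untouched indices handled trivially. The only packaging difference is that you invoke Lemma \ref{lem:unit_step_dec} (whose proof is exactly this chain) instead of writing it inline, and since \eqref{Q_lem} is only an intermediate step of that lemma's proof rather than its statement, it is good that you also spell the chain out explicitly.
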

\begin{proof}
	The proof follows by induction on \(t\). 
	
	\textbf{{Base case:}} At \(t = 0\), \eqref{B_lem} holds due to the initialization.
	
	\textbf{{Induction Hypothesis (IH):}} Assume that \eqref{Q_lem} and \eqref{B_lem} hold for \(t = m - 1\), for some \(m \ge 1\). We prove that \eqref{Q_lem} and \eqref{B_lem} hold for \(t = m\) in the Induction step.

	\textbf{{Induction step:}}
	Since \(\B_{i_m} ^ {m - 1} \succeq \nabla ^ 2 f_{i_m}(\z_{i_m} ^ {m - 1})\), we have the following:
	\begin{align*}
		\big(1 + \beta_m\big) ^ 2 \B_{i_m} ^ {m - 1} \succeq \big(1 + \beta_m\big) ^ 2 \nabla ^ 2 f_{i_m}(\z_{i_m} ^ {m - 1}) \stackrel{\Lem. \ref{integral_lemma}}\succeq \big(1 + \beta_m\big)\K ^ m,
	\end{align*}
	where recall that \(\beta_{m} = \tfrac{M}{2}\norm{\z_{i_m} ^ m - \z_{i_m} ^ {m - 1}}_{\z_{i_m} ^ {m - 1}}\). 
	Applying Lemma \ref{boundedness}, we obtain \begin{align*}
		\Q ^ m = \bfgs{\big(1 + \beta_m\big) ^ 2 \B_{i_m} ^ {m - 1}, \big(1 + \beta_m\big)\K ^ m, \z_{i_m} ^ m - \z_{i_m} ^ {m - 1}}\stackrel{\Lem. \ref{boundedness}}\succeq \big(1 + \beta_m\big)\K ^ m.
	\end{align*}
	Applying Lemma \ref{integral_lemma} to relate \(\K ^ m\) and \(\nabla ^ 2 f_{i_m}(\z_{i_m} ^ {m})\), we obtain \begin{align*}
		\Q ^ m \succeq \big(1 + \beta_m\big) \K ^ m \stackrel{\Lem. \ref{integral_lemma}}\succeq \nabla ^ 2 f_{i_m}(\z_{i_m} ^ m).
	\end{align*}
	Therefore, \eqref{Q_lem} holds for \(t = m\). 
	Since \(\Q ^ m \succeq \nabla ^ 2 f_{i_m}(\z_{i_m} ^ m)\), applying Lemma \ref{boundedness}, we obtain \begin{align}\nn
		\B_{i_m} ^ m = \bfgs{\Q ^ m, \nabla ^ 2 f_{i_m}(\z_{i_m} ^ m), \bar{\u}(\Q ^ m, \nabla ^ 2 f_{i_m}(\z_{i_m} ^ m)} \stackrel{\Lem. \ref{boundedness}}\succeq \nabla ^ 2 f_{i_m}(\z_{i_m} ^ m).
	\end{align}
	Therefore, \eqref{B_lem} holds for \(t = m\). This completes the induction step. The proof is hence complete by induction.
\end{proof}
	%

\textbf{Key steps:} We establish the convergence guarantees in three steps: 
Lemma \ref{lem:distance_change_single_iter} establishes a one-step inequality that  bounds the residual \(\norm{\x^{t} - \x^{\star}}\) in terms of the previous residuals \(\norm{\z_{i} ^ {t - 1} - \x^\star}\) and the norm error in the Hessian approximation \(\norm{\B_{i} ^ {t - 1} - \nabla ^ 2 f_{i}(\z_{i} ^ {t - 1})}\), for all $i \in [n]$. 
Lemma \ref{lem:convergence_h_iqn} 
uses the result of Lemma \ref{lem:distance_change_single_iter} to inductively show that both the residual $\norm{\x^t-\x^\star}$ and the Hessian approximation error \(\sigma(\B_{i_t} ^ {t}, \nabla ^ 2 f_{i_t}(\z_{i_t} ^ t))\),  decrease linearly with $\ceil{t/n}$. 
Using the result of Lemma \ref{lem:distance_change_single_iter} and Lemma \ref{lem:convergence_h_iqn}, Lemma \ref{lem:mean_linear_ver_I} establishes a mean-superlinear convergence result. 
We finally show in Theorem \ref{lem:final_rate} that the residuals can be upper bounded by a superlinearly convergent sequence.

We now present our one-step inequality. A similar inequality with \(\B_{i} ^ {t - 1}\) replaced with \(\D_{i} ^ {t - 1}\) also appears in Lemma \ref{lem:distance_change_single_iter_ver_II} (for SLIQN). Since the proofs are identical, we refer Appendix \ref{app:distance_change_single_iter_ver_II} directly for the proof of Lemma \ref{lem:distance_change_single_iter}.
\begin{lemma}\label{lem:distance_change_single_iter}
	If Assumptions \ref{ass:smooth_strong} and \ref{ass:lip_hessian} hold, the sequence of iterates generated by SIQN satisfy 
	\begin{align}\label{lem1eq}
		\norm{\x^{t} - \x^\star} \leq {\tfrac{\tilde{L}\Gamma ^ {t - 1}}{2}} \sum_{i = 1} ^ {n} \norm{\z_{i} ^ {t - 1} - \x^\star} ^ 2 + {\Gamma ^ {t - 1}} \sum_{i = 1} ^ {n} \norm{\B_i ^ {t - 1} - \nabla ^ 2f_i(\z_{i} ^ {t - 1})}\norm{\z_{i}^{t - 1} - \x^\star}, 
	\end{align} 
	for all \(t \geq 1\), where \(\Gamma ^ {t} := \norm{\big(\sum_{i = 1} ^ {n} \B_{i} ^ {t}\big)^{-1}}\).
\end{lemma}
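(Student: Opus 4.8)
The plan is to derive \eqref{lem1eq} directly from the definition of the iterate in \eqref{x_update}, combined with the first-order optimality condition $\sum_{i=1}^n \nabla f_i(\x^\star) = \zero$ and an exact integral Taylor expansion of each $\nabla f_i$ anchored at $\x^\star$. First I would multiply \eqref{x_update} through by $\bar{\B}^{t-1} := \sum_{i=1}^n \B_i^{t-1}$ to get $\bar{\B}^{t-1}\x^t = \sum_{i=1}^n \big(\B_i^{t-1}\z_i^{t-1} - \nabla f_i(\z_i^{t-1})\big)$, then subtract $\bar{\B}^{t-1}\x^\star = \sum_{i=1}^n \B_i^{t-1}\x^\star$ and insert $\zero = \sum_{i=1}^n \nabla f_i(\x^\star)$ to obtain
\begin{align*}
	\bar{\B}^{t-1}(\x^t - \x^\star) = \sum_{i=1}^n \Big(\B_i^{t-1}(\z_i^{t-1} - \x^\star) - \big(\nabla f_i(\z_i^{t-1}) - \nabla f_i(\x^\star)\big)\Big).
\end{align*}
Writing $\nabla f_i(\z_i^{t-1}) - \nabla f_i(\x^\star) = \H_i^{t-1}(\z_i^{t-1} - \x^\star)$ with $\H_i^{t-1} := \int_0^1 \nabla^2 f_i(\x^\star + \tau(\z_i^{t-1}-\x^\star))\,d\tau$, and decomposing $\B_i^{t-1} - \H_i^{t-1} = \big(\B_i^{t-1} - \nabla^2 f_i(\z_i^{t-1})\big) + \big(\nabla^2 f_i(\z_i^{t-1}) - \H_i^{t-1}\big)$, I would take norms. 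Note $\bar{\B}^{t-1}$ is invertible: by Lemma \ref{lem:formalize_psdness} and strong convexity, each $\B_i^{t-1} \succeq \nabla^2 f_i(\z_i^{t-1}) \succeq \mu\I$, so $\bar{\B}^{t-1} \succeq n\mu\I \succ \zero$. With $\Gamma^{t-1} = \norm{(\bar{\B}^{t-1})^{-1}}$ and the triangle inequality this gives
\begin{align*}
	\norm{\x^t - \x^\star} \leq \Gamma^{t-1}\sum_{i=1}^n \norm{\B_i^{t-1} - \nabla^2 f_i(\z_i^{t-1})}\norm{\z_i^{t-1}-\x^\star} + \Gamma^{t-1}\sum_{i=1}^n \norm{\nabla^2 f_i(\z_i^{t-1}) - \H_i^{t-1}}\norm{\z_i^{t-1}-\x^\star}.
\end{align*}

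It then remains to bound $\norm{\nabla^2 f_i(\z_i^{t-1}) - \H_i^{t-1}}$. Pulling the norm inside the integral and applying Assumption \ref{ass:lip_hessian} ($\tilde{L}$-Lipschitz Hessians), and using $\z_i^{t-1} - \big(\x^\star + \tau(\z_i^{t-1}-\x^\star)\big) = (1-\tau)(\z_i^{t-1}-\x^\star)$, I get $\norm{\nabla^2 f_i(\z_i^{t-1}) - \H_i^{t-1}} \le \big(\int_0^1 \tilde{L}(1-\tau)\,d\tau\big)\norm{\z_i^{t-1}-\x^\star} = \tfrac{\tilde{L}}{2}\norm{\z_i^{t-1}-\x^\star}$, so the second sum is at most $\tfrac{\tilde{L}\Gamma^{t-1}}{2}\sum_{i=1}^n \norm{\z_i^{t-1}-\x^\star}^2$, which is exactly \eqref{lem1eq}. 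The identical argument, with $\D_i^{t-1}$ in place of $\B_i^{t-1}$ and \eqref{x_new_update} in place of \eqref{x_update} (and positive definiteness of $\D_i^{t-1}$ established analogously), proves Lemma \ref{lem:distance_change_single_iter_ver_II}.

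There is no serious obstacle here; the only points that require care are (i) anchoring the Taylor expansion at $\x^\star$ rather than at $\z_i^{t-1}$, so that after subtracting $\bar{\B}^{t-1}\x^\star$ the memoized quantities $\B_i^{t-1}\z_i^{t-1} - \nabla f_i(\z_i^{t-1})$ recombine cleanly, and (ii) splitting off $\nabla^2 f_i(\z_i^{t-1})$ rather than $\nabla^2 f_i(\x^\star)$ in the decomposition of $\B_i^{t-1} - \H_i^{t-1}$ — the modification highlighted in the remark following the statement — which is precisely what makes the error term appear as $\norm{\B_i^{t-1} - \nabla^2 f_i(\z_i^{t-1})}$ and hence later connectable, via Lemma \ref{lem:simple_lemma}, to the $\sigma$-error $\sigma(\B_i^{t-1}, \nabla^2 f_i(\z_i^{t-1}))$ that the greedy updates control.
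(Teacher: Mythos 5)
Your proposal is correct and follows essentially the same route as the paper's proof: anchoring the Taylor expansion at $\x^\star$ via the fundamental theorem of calculus, adding and subtracting $\sum_i \nabla^2 f_i(\z_i^{t-1})(\z_i^{t-1}-\x^\star)$, and bounding the integral remainder by $\tfrac{\tilde{L}}{2}\norm{\z_i^{t-1}-\x^\star}^2$ using Assumption \ref{ass:lip_hessian}. The only cosmetic difference is that you package the integral Hessian as a matrix $\H_i^{t-1}$ and explicitly note the invertibility of $\bar{\B}^{t-1}$, which the paper leaves implicit.
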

\begin{proof}
	Refer Appendix \ref{app:distance_change_single_iter_ver_II}.
\end{proof}
\begin{lemma}\label{lem:convergence_h_iqn}
	If Assumptions \ref{ass:smooth_strong} and \ref{ass:lip_hessian} hold, for any \(\rho\) such that \(0 < \rho < 1 - \tfrac{\mu}{dL}\), there exist positive constants \(\epsilon ^ {\iqn}\) and \(\sigma_0 ^ {\iqn}\) such that if \(\norm{\x ^ {0} - \x^{\star}} \leq \epsilon^{\iqn}\) and \(\sigma(\B_i ^ {0}, \nabla ^ 2 f_i(\x ^ {0})) \leq \sigma_0^{\iqn}\) for all \(i \in [n]\), the sequence of iterates generated by SIQN satisfy \begin{align}\label{contraction_siqn}
		\norm{\x ^ {t} - \x^\star} \leq \rho ^ {\ceil{\tfrac{t}{n}}} \norm{\x ^ {0} - \x^\star}.
	\end{align}
	Further, it holds that\begin{align}\label{sigma_rel_siqn}
		\sigma(\B_{i_t}^{t}, \nabla ^ 2 f_{i_t}(\z_{i_t}^t)) \leq \big(1 - \tfrac{\mu}{dL}\big) ^ {\ceil{\tfrac{t}{n}}} {\delta ^ {\iqn}},
	\end{align}
	where \({\delta ^ {\iqn}} :=  e ^ {\tfrac{4M\sqrt{L}\epsilon^{\iqn}}{1 - \rho}} \big(\sigma_0^{\iqn} + \epsilon^{\iqn} \tfrac{4Md\sqrt{L}}{1 - \tfrac{\rho}{1 - \tfrac{\mu}{dL}}}\big), M = \tilde{L} / {\mu ^ {\tfrac{3}{2}}}\).
\end{lemma}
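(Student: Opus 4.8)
The plan is to prove Lemma~\ref{lem:convergence_h_iqn} by strong induction on the epoch index $k = \ceil{t/n}$, establishing simultaneously that (i) $\norm{\x^t - \x^\star} \le \rho^{\ceil{t/n}}\norm{\x^0 - \x^\star}$ and (ii) $\B_{i_t}^t \succeq \nabla^2 f_{i_t}(\z_{i_t}^t)$ with $\sigma(\B_{i_t}^t, \nabla^2 f_{i_t}(\z_{i_t}^t))$ contracting linearly in $\ceil{t/n}$. The positive semidefiniteness is already handed to us by Lemma~\ref{lem:formalize_psdness}, so the two genuine moving parts are the residual contraction and the $\sigma$-contraction, and they feed each other: a small residual keeps the displacement $\norm{\z_{i_t}^t - \z_{i_t}^{t-1}}_{\z_{i_t}^{t-1}}$ small (and hence $\beta_t$ small), which via Corollary~\ref{cor:imp} keeps the Hessian-approximation error from blowing up; conversely a small $\sigma$-error makes $\Gamma^{t-1} = \norm{(\sum_i \B_i^{t-1})^{-1}}$ close to $1/(n\mu)$ and makes $\norm{\B_i^{t-1} - \nabla^2 f_i(\z_i^{t-1})}$ small (via Lemma~\ref{lem:simple_lemma}), which through the one-step inequality \eqref{lem1eq} of Lemma~\ref{lem:distance_change_single_iter} drives the residual down.

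Concretely, I would first choose $\epsilon^{\iqn}$ and $\sigma_0^{\iqn}$ small enough that several smallness conditions hold: $\Gamma^t$ stays within a constant factor of $1/(n\mu)$ (using that each $\B_i^t \succeq \nabla^2 f_i \succeq \mu\I$, so $\sum_i \B_i^t \succeq n\mu\I$ gives $\Gamma^t \le 1/(n\mu)$ for free — the hard direction is the upper bound on $\B_i^t$ which follows from Lemma~\ref{boundedness} part 1 together with a uniform bound on $\sigma$), and that the contraction factor extracted below is genuinely less than $\rho$. Then, assuming the induction hypothesis holds up through iteration $t-1$ (all past iterates within their $\rho$-balls and all stored $\sigma$-errors controlled), I apply Lemma~\ref{lem:distance_change_single_iter} at iteration $t$: the first term is $\tfrac{\tilde L \Gamma^{t-1}}{2}\sum_i \norm{\z_i^{t-1} - \x^\star}^2$, which is quadratically small in $\epsilon^{\iqn}$, and the second term is $\Gamma^{t-1}\sum_i \norm{\B_i^{t-1} - \nabla^2 f_i(\z_i^{t-1})}\norm{\z_i^{t-1} - \x^\star}$, where I bound $\norm{\B_i^{t-1} - \nabla^2 f_i(\z_i^{t-1})} \le L\,\sigma(\B_i^{t-1}, \nabla^2 f_i(\z_i^{t-1}))$ by Lemma~\ref{lem:simple_lemma}. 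Using that each $\z_i^{t-1}$ equals $\x^s$ for some $s$ in the current or previous epoch, the residual factor is $\le \rho^{\ceil{t/n}-1}\norm{\x^0-\x^\star}$; combining and insisting $\epsilon^{\iqn}$ be small enough that the quadratic term is dominated and the prefactor on the linear term is below $\rho$, I get $\norm{\x^t - \x^\star} \le \rho^{\ceil{t/n}}\norm{\x^0-\x^\star}$, which is \eqref{contraction_siqn}.

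For the $\sigma$-bound \eqref{sigma_rel_siqn}, the idea is to track a single coordinate $i$ through the epochs: between two successive updates of $\B_i$ the matrix is unchanged, and at the update step the pair (classic BFGS then greedy BFGS with the $(1+\beta_t)$ rescaling) is exactly the setup of Lemma~\ref{lem:unit_step_dec}/Corollary~\ref{cor:imp} with $\alpha = \beta_t$, and since the residual contraction already proven gives $\beta_t = \tfrac{M}{2}\norm{\z_{i_t}^t - \z_{i_t}^{t-1}}_{\z_{i_t}^{t-1}} \le \sqrt{L}\,\rho^{\ceil{t/n}-1}\cdot(\text{const})\cdot\epsilon^{\iqn}$, the per-epoch recursion is precisely the one solved in the proof of Lemma~\ref{lem:sig} (with $\gamma = \rho$, $\tilde\x = \x^\star$). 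Invoking Lemma~\ref{lem:sig} (or Remark~\ref{change_r}) with $k = \ceil{t/n}$ then yields $\sigma(\B_{i_t}^t, \nabla^2 f_{i_t}(\z_{i_t}^t)) \le (1-\tfrac{\mu}{dL})^{\ceil{t/n}} e^{4M\sqrt L\epsilon^{\iqn}/(1-\rho)}\big(\sigma_0^{\iqn} + \epsilon^{\iqn}\tfrac{4Md\sqrt L}{1-\rho/(1-\mu/(dL))}\big)$, which is exactly the claimed $\delta^{\iqn}$. The main obstacle I anticipate is the bookkeeping in the induction: because coordinate $i$ is touched only once per epoch, the stored quantities $\z_i^{t-1}$ and $\B_i^{t-1}$ at a generic iteration $t$ come from a mix of the current and previous epoch, so one must carefully verify that the "smallness budget" ($\epsilon^{\iqn}, \sigma_0^{\iqn}$) chosen at the start is simultaneously compatible with (a) the bound on $\Gamma^t$, (b) the residual contraction coefficient being $<\rho$, and (c) the recursion in Lemma~\ref{lem:sig} closing — i.e., making the constants mutually consistent rather than circular is the delicate part; the analytic estimates themselves are all off-the-shelf from the supporting lemmas.
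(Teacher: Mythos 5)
Your proposal follows essentially the same route as the paper's proof: an induction that couples the residual contraction (via the one-step bound of Lemma \ref{lem:distance_change_single_iter}, the norm-to-$\sigma$ conversion of Lemma \ref{lem:simple_lemma}, and a uniform bound on $\Gamma^t$, with $\epsilon^{\iqn},\sigma_0^{\iqn}$ fixed at the outset as in \eqref{choice_variables}) with the per-coordinate $\sigma$-contraction obtained by feeding the displacement bound $\beta_t \le M\sqrt{L}\rho^{\ceil{t/n}-1}\epsilon^{\iqn}$ (Corollary \ref{cor:up_beta}) into Lemma \ref{lem:sig} along the subsequence of updates of a single index, exactly as the paper does with the sequence $\{\y_k\}$. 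The only deviation is harmless and in fact a slight simplification: you bound $\Gamma^t \le 1/(n\mu)$ directly from $\B_i^t \succeq \nabla^2 f_i(\z_i^t) \succeq \mu\I$ (valid by Lemma \ref{lem:formalize_psdness}), whereas the paper derives $\Gamma^t \le (1+\rho)/(n\mu)$ via Banach's Lemma; both suffice to close the induction.
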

\begin{proof}For a given \(\rho\) that satisfies \(0 < \rho < 1 - \frac{\mu}{dL}\), let the variables \(\epsilon^{\iqn}, \sigma_0^{\iqn}\) be chosen to satisfy \begin{align}\label{choice_variables}
		\frac{\frac{\tilde{L}\epsilon^{\iqn}}{2} + L{\delta ^ {\iqn}}}{\mu} \leq \frac{\rho}{1 + \rho} < 1.
	\end{align}
	\begin{rem}\label{rem:existence}
			 Indeed there exists positive constants \(\epsilon ^ \iqn, \sigma_0 ^ \iqn\) that satisfy \eqref{choice_variables}. Recall from the premise that \(\delta\) is a function of \(\epsilon ^\iqn, \sigma_0 ^ \iqn\), and we can define the left-hand-side of \eqref{choice_variables} as a function \(g(\epsilon^{\iqn}, \sigma^{\iqn}_0) \) as \[
			g(\epsilon^{\iqn}, \sigma^{\iqn}_0) := \frac{\frac{\tilde{L}\epsilon^{\iqn}}{2} + L{\delta ^ {\iqn}}}{\mu} = \frac{\frac{\tilde{L}\epsilon^{\iqn}}{2} + Le ^ {\frac{4M\sqrt{L}\epsilon^{\iqn}}{1 - \rho}} \big(\sigma^{\iqn}_0 + \epsilon^{\iqn} \frac{4Md\sqrt{L}}{1 - \frac{\rho}{1 - \frac{\mu}{dL}}}\big)}{\mu}.\]
   
    Fix ${\sigma_0^\iqn = \tfrac{\mu}{2L} (\tfrac{\rho}{1+\rho}) > 0}$. It is easy to see that $g(\epsilon ^ \iqn,\sigma_0 ^ \iqn)$ is continuous and monotonically increasing in \(\epsilon^\iqn\).
 Also, note that $g(0, \tfrac{\mu}{2L} (\tfrac{\rho}{1+\rho})) = \tfrac{\rho}{2(1+\rho)}$ and 
 $\lim_{\epsilon \rightarrow \infty} g(\epsilon, \tfrac{\mu}{2L} (\tfrac{\rho}{1+\rho})) = \infty$.
 We can therefore apply the Intermediate Value Theorem (IVT) to guarantee that there exists $\epsilon > 0$ such that $g(\epsilon, \tfrac{\mu}{2L} (\tfrac{\rho}{1+\rho})) \le \tfrac{\rho}{1+\rho}$.
		\end{rem}

	\textbf{{Base case:}} At \(t = 1\), applying Lemma \ref{lem:distance_change_single_iter}, we have \begin{align}
		\norm{\x^1 - \x^\star} \leq \frac{{\tilde L}\Gamma ^ 0}{2} \sum_{i = 1} ^ {n} \norm{\z_i ^ 0 - \x^\star} ^ 2 + \Gamma^0 \sum_{i = 1} ^ {n} \norm{\B_i ^ 0 - \nabla ^ 2 f_i(\z_i ^ 0)} \norm{\z_i^0 - \x^\star}.\nn
	\end{align}
	Since \(\B_i ^ 0 \succeq \nabla ^ 2 f_i(\z_i ^ 0)\) and \(\nabla ^ {2}f_i(\z_i ^ 0) \preceq L\I\), applying Lemma \ref{lem:simple_lemma}, we have \(\sigma(\B_i ^ 0, \nabla ^ 2 f_i(\z_i^0)) \geq \frac{1}{L} \norm{\B_i ^ 0 - \nabla ^ 2 f_i(\z_i ^ 0)}\). This gives\begin{align}\nn
		\norm{\x^1 - \x^\star} \leq  n\Gamma ^ 0\big(\frac{\tilde{L}\epsilon^{\iqn}}{2} + L \sigma^{\iqn}_0\big) \norm{\x^0 - \x^\star},
	\end{align}
	where we have used \(\z_{i}^{0} = \x^0\) and \(\norm{\x^0 - \x^\star} \leq \epsilon^{\iqn}, \sigma(\B_{i}^0, \nabla ^ 2 f_{i}(\z_i^0)) \leq \sigma^{\iqn}_0\). 
 
 We now bound \(\Gamma ^ 0\).
	Define \(\X^0 := \frac{1}{n}\sum_{i = 1} ^ {n} \B_{i} ^ 0, \Y^{0} := \frac{1}{n}\sum_{i = 1} ^ {n} \nabla ^ 2 f_{i}(\z_i ^ 0)\). We have the following:\begin{align}\nn
		\frac{1}{n} \sum_{i = 1} ^ {n} \norm{\B_{i} ^ 0 - \nabla ^ 2 f_{i}(\z_i^0)} \stackrel{{\Delta}} \geq\norm{\X^0 - \Y^0} = \norm{(\Y^{0})((\Y^0)^{-1}\X^0 - \I)} \stackrel{(a)}\geq \mu\norm{(\Y^0)^{-1}\X^0 - \I},
	\end{align}
	where \((a)\) follows since \(\Y ^ 0 =  \frac{1}{n}\sum_{i = 1} ^ {n} \nabla ^ 2 f_{i}(\z_i ^ 0) \succeq \mu \I\)  (\(\because\) Assumption \ref{ass:smooth_strong}). This gives us \[\norm{(\Y^0)^{-1}\X^0 - \I}\leq \frac{L\sigma^{\iqn}_0}{\mu} \leq \frac{L{\delta ^ {\iqn}}}{\mu} \stackrel{\eqref{choice_variables}} < \frac{\rho}{1 + \rho}.\]

	 We can now upper bound $\Gamma^0$ using Banach's Lemma \ref{lem:banach}. Consider the matrix $(\Y^0)^{-1}\X^0 - \I$ and note that it satisfies the requirement of \ref{lem:banach} from the above result. Therefore, we have
\begin{align}
& \norm{(\X^{0})^{-1}\Y^0}
= \norm{\big(\I + ((\Y^0)^{-1}\X^0) - \I)\big)^{-1}} \stackrel{\Lem. \ref{lem:banach}}{\leq} \frac{1}{1 - \norm{ (\Y^0)^{-1}\X^0 - \I }} \leq 1 + \rho.\nn
\end{align}
Recall that \(\mu \I \preceq \Y^0\).
Using this and the previous result, we can upper bound $\norm{(\X^{0})^{-1}}$ as
$\mu \norm{(\X^{0})^{-1}} \leq \norm{(\X^{0})^{-1}\Y^0} \leq 1 + \rho.\nn$
This gives us, \(\Gamma^0 = \frac{1}{n}\norm{(\X^{0})^{-1}} \leq \frac{1 + \rho}{n\mu}\). Therefore, we have the following bound on \(\norm{\x^{1} - \x^\star}\):\begin{align}
		\norm{\x^1 - \x^\star} \leq \frac{\frac{\tilde{L}\epsilon^{\iqn}}{2} + L \sigma^{\iqn}_0}{\mu}(1 + \rho) \norm{\x^0 - \x^\star} \leq \frac{\frac{\tilde{L}\epsilon^{\iqn}}{2} + L{\delta ^ {\iqn}}}{\mu}(1 + \rho)\norm{\x^0 - \x^\star} \stackrel{\eqref{choice_variables}}\leq \rho \norm{\x^0 - \x^\star}.\nn
	\end{align}
	By the updates performed by Algorithm \ref{alg:h_iqn}, we have \(\z_{1} ^ 1 = \x^{1}\) and \(\z_{i} ^ 1 = \x^{0}\) for \(i \neq 1\). Applying Lemma \ref{lem:sig}, we obtain \begin{align}
		\sigma(\B^{1}_1, \nabla ^ 2 f_{1}(\z_{1} ^ {1})) \leq \big(1 - c\big)e ^ {\frac{4M\sqrt{L}\epsilon^{\iqn}}{1 - \rho}}\big(\underbrace{\sigma(\B_{1} ^ 0, \nabla ^ 2 f_{1}(\z_1 ^ 0))}_{\leq \sigma^{\iqn}_0} + \epsilon^{\iqn} \frac{4Md\sqrt{L}}{1 - \frac{\rho}{1 - c}}\big) \leq (1 - c){\delta ^ {\iqn}}.\nn
	\end{align} This completes the proof for \(t = 1\).

			\textbf{{Induction Hypothesis (IH):}} 	Let \eqref{contraction_siqn} and \eqref{sigma_rel_siqn} hold for \(t \in [jn + m]\), where \(j \geq 0, 0 \leq m < n\).

				\textbf{{Induction step:}} 
    We then prove that \eqref{contraction_siqn} and \eqref{sigma_rel_siqn} also hold for \(t = jn + m + 1\).
Recall that the tuples are updated in a deterministic cyclic order, and at the current time $t$, we are in the $j^{\text{th}}$ cycle and have updated the $m^{\text{th}}$ tuple. Therefore, it is easy to note that 
\(\z_{i}^{jn + m} = \x ^ {jn + i}\), for all \(i \in [m]\), which refer to the tuples updated in this cycle, and \(\z_{i} ^ {jn + m} = \x^{jn - n + i}\) for all $i \in [n]\backslash[m]$, which refer to the tuples updated in the previous cycle. 	From the induction hypothesis, we have
\begin{align}\label{induction_hyp_eff_1}
	\norm{\z_i ^ {jn + m} - \x ^ \star} \leq \begin{cases}
	\rho ^ {\ceil{\frac{jn + i}{n}}} \norm{\x^{0} - \x^{\star}}    & i \in [m], \\
	\rho ^ {\ceil{\frac{(j - 1)n + i)}{n}}} \norm{\x^0 - \x^\star} & i \in [n]\backslash[m].    
	\end{cases}
\end{align}

    {\textbf{Step 1}} 
    
    Applying Lemma \ref{lem:distance_change_single_iter} on updating \(\z_{m + 1}\), we have
			\begin{align}
				\norm{\z_{m + 1} ^ {jn + m + 1} - \x^\star} & \leq \frac{\tilde{L}\Gamma ^{jn + m}}{2} \sum_{i = 1} ^ {n} \norm{\z_{i} ^ {jn + m} - \x^\star} ^ 2 + \nn\\
				& \hspace{20mm} \Gamma ^ {jn + m} \sum_{i = 1} ^ {n} \norm{\B_i ^ {jn + m} - \nabla ^2f_{i}(\z_i ^ {jn + m})}\norm{\z_{i}^{jn + m} - \x^\star},\nn \\
				& = \frac{\tilde{L}\Gamma ^ {jn + m}}{2}\big(\sum_{i = 1} ^ {m} \norm{\z_{i} ^ {jn + m} - \x^\star} ^ 2  + \sum_{i = m + 1}^{n} \norm{\z_{i} ^ {jn + m} - \x^\star} ^ 2 \big) + \nn \\
				& \hspace{20mm} \Gamma^{jn + m}\big(\sum_{i = 1} ^ {m} \norm{\B_i ^ {jn + m} - \nabla ^2f_{i}(\z_i ^ {jn + m})}\norm{\z_{i}^{jn + m} - \x^\star}\big) + \nn \\
				& \hspace{20mm}\Gamma ^ {jn + m}\big(\sum_{i = m + 1}^ {n} \norm{\B_i ^ {jn + m} - \nabla ^2f_{i}(\z_i ^ {jn + m})}\norm{\z_{i}^{jn + m} - \x^\star}\big)\nn.
			\end{align}
			From the induction hypothesis, we have
			\begin{align}\label{induction_hyp_1}
				\norm{\z_i ^ {jn + m} - \x ^ \star} {\leq} \begin{cases}
					\rho ^ {\ceil{\frac{jn + i}{n}}} \norm{\x^{0} - \x^{\star}} & i \in [m], \\
					\rho ^ {\ceil{\frac{(j - 1)n + i)}{n}}} \norm{\x^0 - \x^\star} & i \in [n] \backslash [m],
				\end{cases}
			\end{align}
			Since \(\B_{i} ^ {jn + m} \succeq \nabla ^ 2 f_{i}(\z_{i} ^ {jn + m})\) (Lemma \ref{lem:formalize_psdness}), applying Lemma \ref{lem:simple_lemma} and the induction hypothesis for \(\sigma\), we have the following bound on \(\norm{\B_{i}^{jn + m} - \nabla ^ 2 f_i(\z_i^{jn + m})}\): \begin{align}\label{induction_hyp_2}
				\norm{\B_{i}^{jn + m} - \nabla ^ 2 f_i(\z_i^{jn + m})} \stackrel{\Lem. \ref{lem:simple_lemma}}\leq L \sigma^{\iqn}(\B_i ^ {jn + m}, \nabla ^ 2 f_i(\z_i ^ {jn + m})) \leq \begin{cases}
					L{\delta ^ {\iqn}} (1 - c) ^ {\ceil{\frac{
								jn + i}{n}}} & i \in [m], \\
					L {\delta ^ {\iqn}} (1 - c) ^ {\ceil{\frac{(j - 1)n + i}{n}}} & i \in [n]\backslash[m].
				\end{cases}
			\end{align}
			Therefore, the bound on \(\norm{\z_{m + 1} ^ {jn + m + 1} - \x^\star}\) simplifies to \begin{align}
				\norm{\z_{m + 1} ^ {jn + m + 1} - \x^\star} & \stackrel{\eqref{induction_hyp_1}
    \eqref{induction_hyp_2}}\leq \Gamma ^ {jn + m} \big(m\frac{\tilde{L}\epsilon^{\iqn}}{2} \rho ^ {2j + 2} + (n - m)\frac{\tilde{L}\epsilon^{\iqn}}{2}\rho ^ {2j} + mL {\delta ^ {\iqn}} (1 - c) ^ {j + 1} \rho ^ {j + 1} \\ 
				& \hspace{50mm} + (n - m)L{\delta ^ {\iqn}}(1 - c) ^ {j} \rho ^ j\big)\norm{\x^0 - \x^\star} \label{track_1}, \\
				&\hspace{-10mm}\leq \Gamma ^ {jn + m}\rho ^ j(n\frac{\tilde{L}\epsilon^{\iqn}}{2}\underbrace{\rho ^ {j}}_{\leq 1} + m L {\delta ^ {\iqn}} \underbrace{(1 - c) ^ {j + 1}\rho}_{\leq 1} + (n - m)L{\delta ^ {\iqn}} \underbrace{(1 - c) ^ {j})}_{\le 1}\norm{\x^{0} - \x^\star},\nn \\
				&\hspace{-10mm}\leq \Gamma ^ {jn + m} \rho ^ j  \big(n\frac{\tilde{L}\epsilon^{\iqn}}{2} + nL{\delta ^ {\iqn}} \big)\norm{\x^0 - \x^\star}.\label{track_2}
			\end{align}
			
			We now bound \(\Gamma ^ {jn + m}\).
			Define 
			\(\X^{jn + m} \coloneqq \tfrac{1}{n}\sum_{i = 1} ^ {n} \B_{i} ^ {jn + m}\) and 
			\(\Y^{jn + m} \coloneqq \tfrac{1}{n}\sum_{i = 1} ^ {n} \nabla ^ 2 f_{i}(\z_i ^ {jn + m})\).
			We follow the same recipe to bound \(\Gamma ^ 0\) in the base case. Observe that
			\begin{align*}
				\frac{1}{n} \sum_{i = 1} ^ {n} \norm{\B_{i} ^ {jn + m} - \nabla ^ 2 f_{i}(\z_i^{jn + m})} \stackrel{\Delta} \geq\norm{\X^{jn + m} - \Y^{jn + m}} 
				\overset{(a)}{\geq} \mu\norm{(\Y^{jn + m})^{-1}\X^{jn + m} - \I}.
			\end{align*}
			The inequality \((a)\) follows 
			from Assumption \ref{ass:smooth_strong} which implies that 
			\(
			\mu \I \preceq \frac{1}{n} \sum_{i = 1} ^ n \nabla ^ 2 f_{i}(\z_i ^ {jn + m})
			= \Y^{jn + m}\). By tracking steps \eqref{track_1}-\eqref{track_2}, we can establish that \begin{align*}
					\frac{1}{n} \sum_{i = 1} ^ {n} \norm{\B_{i} ^ {jn + m} - \nabla ^ 2 f_{i}(\z_i^{jn + m})} \leq L\delta^\iqn.
			\end{align*}
			From the above two chain of inequalities, we deduce
			\begin{align*}
				\norme{(\Y^{jn + m})^{-1}\X^{jn + m} - \I}
				\leq \tfrac{L\delta^\iqn}{\mu}
				\stackrel{\eqref{choice_variables}} < \frac{\rho}{1 + \rho}.
			\end{align*}
			We can now upper bound \(\Gamma ^ {jn + m}\) using Banach's Lemma by exactly following the procedure laid out in the base case. We get that 
\(\Gamma^{jn+m} = \norme{\sum_{i = 1} ^ {n} \B_{i} ^ {jn + m})^{-1}} \leq \frac{1 + \rho}{n\mu}\).  Therefore, we obtain \begin{align}\label{induction_step_1}
				\norm{\z_{m + 1} ^ {jn + m + 1} - \x^\star}\leq \frac{\frac{\tilde{L}\epsilon^{\iqn}}{2} + L{\delta ^ {\iqn}}}{\mu}(1 + \rho)\rho ^ {j} \norm{\x^0 - \x^\star}  \stackrel{\eqref{choice_variables}}\leq \rho ^ {j + 1} \norm{\x^0 - \x^\star}.
			\end{align}
			Since \(\z_{m + 1} ^ {jn + m + 1} = \x^{jn + m + 1}\), \eqref{contraction_siqn} holds for \(t = jn + m + 1\). 
   
   {\textbf{Step 2}}

   Next, we prove that \(\sigma(\B_{m + 1} ^ {jn + m + 1}, \nabla ^ 2 f_{m + 1}(\z_{m + 1} ^ {jn + m + 1})) \leq (1 - c) ^ {j + 1} {\delta ^ {\iqn}}\). We define the sequence \(\{\y_k\}\), for \(k = 0, \dots, j + 1\), such that 
    \(\{\y_k\} = \{\x ^ 0, \z_{m + 1} ^ {m + 1}, \dots, \z_{m + 1} ^ {jn + m + 1}\}\). 
 The sequence \(\{\y_k\}_{k = 1} ^ {j + 1}\) comprises of the updated value of \(\z_{m + 1}\) till the current cycle \(j\). Since \(\{\y_k\}\) comes about from the application of BFGS updates as described in the statement of Lemma \ref{lem:sig}, 
therefore $\{\y_k\}$ satisfies the conditions of Lemma \ref{lem:sig}. This implies that \begin{align}\nn
				\norm{\y_k - {\x^{\star}}} \leq \rho ^ {\ceil{\frac{(k - 1)n + m + 1}{n}}}\norm{\x^{0} - \x^\star} = \rho ^ {k} \norm{\x^0 - \x^\star},
			\end{align}
			for \(k \in [j + 1]\). 
		Since \(\y_{j + 1} = \z_{m + 1} ^ {jn + m + 1}\), we have proved \eqref{sigma_rel_siqn} for \(t = jn + m + 1\). The proof is hence complete via induction.
  	\end{proof}

		\begin{corollary}\label{cor:up_beta}
			If Assumptions \ref{ass:smooth_strong} and \ref{ass:lip_hessian} hold, the following holds true for all \(t \ge 1\): \begin{align}\label{bound_beta_t}
				\norm{\z_{i_t} ^ t - \z_{i_t} ^ {t - 1}}_{\z_{i_t} ^ {t - 1}} \leq 2U_t,
			\end{align}
			where \(U_t := \sqrt{L}\rho ^ {\ceil{\frac{t}{n} - 1}}\epsilon ^ {\iqn}\).
		\end{corollary}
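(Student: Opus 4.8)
The plan is to reduce the weighted norm $\norm{\cdot}_{\z_{i_t}^{t-1}}$ to the Euclidean norm and then invoke the linear contraction from Lemma \ref{lem:convergence_h_iqn}. First I would identify the two iterates involved. By the cyclic update rule of SIQN, index $i_t$ was last current at iteration $t-n$ (since $i_{t-n}=i_t$), so $\z_{i_t}^{t-1}=\x^{t-n}$ when $t>n$, while for $1\le t\le n$ index $i_t$ has not yet been touched and $\z_{i_t}^{t-1}=\z_{i_t}^{0}=\x^{0}$; in either case $\z_{i_t}^{t}=\x^{t}$. Hence the quantity to control is $\norm{\x^{t}-\z_{i_t}^{t-1}}_{\z_{i_t}^{t-1}}$.

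Next, Assumption \ref{ass:smooth_strong} gives $\nabla^2 f_{i_t}(\z_{i_t}^{t-1})\preceq L\I$, so $\norm{\y}_{\z_{i_t}^{t-1}}\le \sqrt{L}\,\norm{\y}$ for every $\y$, and the triangle inequality yields
\begin{align}\nn
	\norm{\x^{t}-\z_{i_t}^{t-1}}_{\z_{i_t}^{t-1}} \le \sqrt{L}\big(\norm{\x^{t}-\x^\star} + \norm{\z_{i_t}^{t-1}-\x^\star}\big).
\end{align}
I would then bound the two terms using \eqref{contraction_siqn}. For the first, $\norm{\x^{t}-\x^\star}\le \rho^{\ceil{t/n}}\norm{\x^{0}-\x^\star}\le \rho^{\ceil{t/n}-1}\epsilon^{\iqn}$, using $\rho<1$ and $\norm{\x^{0}-\x^\star}\le\epsilon^{\iqn}$. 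For the stale term, when $t>n$ we apply \eqref{contraction_siqn} at time $t-n$ to get $\norm{\z_{i_t}^{t-1}-\x^\star}=\norm{\x^{t-n}-\x^\star}\le \rho^{\ceil{(t-n)/n}}\epsilon^{\iqn}=\rho^{\ceil{t/n}-1}\epsilon^{\iqn}$, and when $t\le n$ we have $\norm{\z_{i_t}^{t-1}-\x^\star}=\norm{\x^{0}-\x^\star}\le\epsilon^{\iqn}=\rho^{\ceil{t/n}-1}\epsilon^{\iqn}$ since $\ceil{t/n}=1$ there. Combining the two bounds and using $\ceil{t/n-1}=\ceil{t/n}-1$ gives $\norm{\x^{t}-\z_{i_t}^{t-1}}_{\z_{i_t}^{t-1}}\le 2\sqrt{L}\,\rho^{\ceil{t/n-1}}\epsilon^{\iqn}=2U_t$, which is the claim.

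I do not expect a genuine obstacle here: the argument is an immediate consequence of Lemma \ref{lem:convergence_h_iqn}, and the only care required is the bookkeeping that identifies $\z_{i_t}^{t-1}$ (treating the first epoch separately) and the ceiling-function arithmetic. I would add a brief remark that this is exactly the estimate needed downstream, since it yields $\beta_t=\tfrac{M}{2}\norm{\z_{i_t}^{t}-\z_{i_t}^{t-1}}_{\z_{i_t}^{t-1}}\le M U_t = M\sqrt{L}\,\rho^{\ceil{t/n-1}}\epsilon^{\iqn}=\alpha_{\ceil{t/n-1}}$, justifying the replacement of the per-iteration scaling $\beta_t$ by the epoch-constant factor $\alpha_{\ceil{t/n-1}}$ in SLIQN.
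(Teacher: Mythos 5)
Your proposal is correct and follows essentially the same route as the paper: bound the weighted norm by $\sqrt{L}$ times the Euclidean norm via $\nabla^2 f_{i_t}(\z_{i_t}^{t-1}) \preceq L\I$, apply the triangle inequality through $\x^\star$, and invoke the linear contraction \eqref{contraction_siqn} for both the fresh and the stale iterate. Your extra bookkeeping (identifying $\z_{i_t}^{t-1}$ with $\x^{t-n}$ or $\x^0$) and the closing remark $\beta_t \le M U_t = \alpha_{\ceil{t/n-1}}$ match, and in fact state more carefully, what the paper's proof and subsequent remark assert.
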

		
		\begin{proof}
			We can bound \(\norm{\z_{i_t} ^ t - \z_{i_t} ^ {t - 1}}_{\z_{i_t} ^ {t - 1}}\) in the following manner:
			\begin{align}\nn
				\norm{\z_{i_t} ^ t - \z_{i_t} ^ {t - 1}}_{\z_{i_t} ^ {t - 1}} \stackrel{(a)}\leq \sqrt{L} \norm{\z_{i_t} ^ t - \z_{i_t} ^ {t - 1}} &\stackrel{{\Delta}}\leq \sqrt{L}(\norm{\z_{i_t} ^ t - \x ^ \star} + \norm{\z_{i_t} ^ {t-1} - \x^\star}),\nn \\
				&\leq \sqrt{L} \big(\rho ^ {\ceil{\frac{t}{n}}} + \rho ^ {\ceil{\frac{t - n}{n}}}\big) \epsilon^{\iqn} \leq 2 \sqrt{L}\rho ^ {\ceil{\frac{t}{n} - 1}} \epsilon^{\iqn}\nn,
			\end{align} 
			where \((a)\) follows since \(\nabla ^ 2 f_{i_t}(\z_{i_t} ^ {t - 1}) \preceq L\I\) (\(\because\) Assumption \ref{ass:smooth_strong}). Therefore, the correction term \(\beta_t = \frac{M}{2}\norm{\z_{i_t} ^ t - \z_{i_t} ^ {t - 1}}_{\z_{i_t} ^ {t - 1}} \leq M\sqrt{L}\rho ^ {\ceil{\frac{t}{n} - 1}} \epsilon^{\iqn} = 2 U_{t}\), which establishes \eqref{bound_beta_t}. 
		\end{proof}
		\begin{rem}[\(\beta_t\) can be bounded by a quantity that remains constant in a cycle]
			Recall that the correction factor \(\beta_t = \frac{M}{2}\norm{\z_{i_t} ^ t - \z_{i_t} ^ {t - 1}}_{\z_{i_t} ^ {t - 1}}\) in SIQN was introduced to ensure that \((1 + \beta_t) ^ 2 {\B_{i_t}} ^ {t - 1} \succeq (1 + \beta_t)\K ^ t\) (we formalized this in Lemma \ref{lem:formalize_psdness}). Intuitively, executing SIQN with a higher correction factor \(\beta_{t} ^ \new = MU_{t} \geq \frac{M}{2}\norm{\z_{i_t} ^ t - \z_{i_t} ^ {t - 1}}_{\z_{i_t} ^ {t - 1}}\) (follows from Corollary \ref{cor:up_beta}) which remains constant in a cycle, it should continue to hold that \((1 + \beta_t ^ \new) ^ 2 \B_{i_t} ^ {t - 1} \succeq (1 + \beta_t ^ \new)\K ^ t\). We skip the proof for the sake of brevity as it is similar to the analysis SIQN.
		\end{rem}

Next, we present our mean superlinear convergence result for the iterates of SIQN. The main idea behind the proof is to substitute the linear convergence results, specifically \eqref{contraction_siqn} and \eqref{sigma_rel_siqn} from Lemma \ref{lem:convergence_h_iqn}, back into the result from Lemma \ref{lem:distance_change_single_iter}.
By doing so, the first term on the right-hand side of \eqref{lem1eq} converges quadratically, while the second term converges superlinearly. This combination leads to the desired result.
\begin{lemma}\label{lem:mean_linear_ver_I}
	If Assumptions \ref{ass:smooth_strong} and \ref{ass:lip_hessian} hold, for any \(\rho\) such that \(0 < \rho < 1 - \tfrac{\mu}{dL}\), there exist positive constants \(\epsilon ^ {\iqn}\) and \(\sigma_0 ^ {\iqn}\) such that if \(\norm{\x ^ {0} - \x^{\star}} \leq \epsilon^{\iqn}\) and \(\sigma(\B_i ^ {0}, \nabla ^ 2 f_i(\x ^ {0})) \leq \sigma_0^{\iqn}\) for all \(i \in [n]\), the sequence of iterates produced by SIQN satisfy \begin{align}\nn
		\norm{\x^{t} - \x^\star} \leq \big(1 - \tfrac{\mu}{dL}\big) ^ {\ceil{\tfrac{t}{n}}} \tfrac{1}{n}\sum_{i = 1} ^ n \norm{\x^{t - i} - \x^\star}. 
	\end{align}
\end{lemma}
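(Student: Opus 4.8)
The plan is to feed the linear-convergence estimates \eqref{contraction_siqn} and \eqref{sigma_rel_siqn} of Lemma~\ref{lem:convergence_h_iqn} back into the one-step inequality \eqref{lem1eq} of Lemma~\ref{lem:distance_change_single_iter}. The preliminary observation is combinatorial: since SIQN activates indices in the cyclic order $i_s = 1 + (s-1)\bmod n$, the stale iterate $\z_i^{t-1}$ carried by index $i$ equals $\x^{s}$, where $s\le t-1$ is the most recent iteration at which index $i$ was active; as $i$ runs over $[n]$ these times are exactly $t-1, t-2, \dots, t-n$ (with the convention $\x^{s}=\x^0$ for $s\le 0$, inherited from $\z_i^0 = \x^0$). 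Relabelling both sums in \eqref{lem1eq} by $j\in[n]$ therefore makes the right-hand side depend only on the residuals $\norm{\x^{t-j}-\x^\star}$ and on the stale Hessian errors $\norm{\B_i^{t-1}-\nabla^2 f_i(\z_i^{t-1})}$ at those points.

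Next I would bound the three ingredients of \eqref{lem1eq} separately. For $\Gamma^{t-1}=\norm{(\sum_{i=1}^n\B_i^{t-1})^{-1}}$ I would reuse the Banach's-Lemma argument from the base case and induction step of the proof of Lemma~\ref{lem:convergence_h_iqn}, which under the stated premises is valid at every $t$ and gives $\Gamma^{t-1}\le\tfrac{1+\rho}{n\mu}$. For the Hessian errors, Lemma~\ref{lem:formalize_psdness} gives $\B_i^{t-1}\succeq\nabla^2 f_i(\z_i^{t-1})$ and Assumption~\ref{ass:smooth_strong} gives $\nabla^2 f_i(\z_i^{t-1})\preceq L\I$, so Lemma~\ref{lem:simple_lemma} yields $\norm{\B_i^{t-1}-\nabla^2 f_i(\z_i^{t-1})}\le L\,\sigma(\B_i^{t-1},\nabla^2 f_i(\z_i^{t-1}))$; since $\B_i^{t-1}$ and $\z_i^{t-1}$ are unchanged since index $i$'s last update at some $s\in\{t-n,\dots,t-1\}$, applying \eqref{sigma_rel_siqn} together with $\ceil{s/n}\ge\ceil{t/n}-1$ bounds this error by $L\delta^{\iqn}\big(1-\tfrac{\mu}{dL}\big)^{\ceil{t/n}-1}$. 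For the residuals, \eqref{contraction_siqn} and $\ceil{(t-j)/n}\ge\ceil{t/n}-1$ give $\norm{\x^{t-j}-\x^\star}\le\rho^{\ceil{t/n}-1}\epsilon^{\iqn}$ for every $j\in[n]$.

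Substituting these and bounding one of the two factors $\norm{\x^{t-j}-\x^\star}$ in the quadratic term by $\rho^{\ceil{t/n}-1}\epsilon^{\iqn}$ collapses \eqref{lem1eq} to
\begin{align}\nn
\norm{\x^{t}-\x^\star}\le\Big(\tfrac{\tilde{L}(1+\rho)}{2\mu}\rho^{\ceil{t/n}-1}\epsilon^{\iqn}+\tfrac{(1+\rho)L\delta^{\iqn}}{\mu}\big(1-\tfrac{\mu}{dL}\big)^{\ceil{t/n}-1}\Big)\tfrac{1}{n}\sum_{j=1}^{n}\norm{\x^{t-j}-\x^\star}.
\end{align}
It then suffices to further shrink $\epsilon^{\iqn}$ and $\sigma_0^{\iqn}$ — which keeps \eqref{choice_variables} valid since its left-hand side is monotone in both — so that $\tfrac{\tilde{L}(1+\rho)}{\mu}\epsilon^{\iqn}\le 1-\tfrac{\mu}{dL}$ and $\tfrac{2(1+\rho)L\delta^{\iqn}}{\mu}\le 1-\tfrac{\mu}{dL}$; the latter is attainable because $\delta^{\iqn}\to 0$ as $\epsilon^{\iqn},\sigma_0^{\iqn}\to 0$. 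Since $\rho<1-\tfrac{\mu}{dL}$, the quantity $\big(1-\tfrac{\mu}{dL}\big)^{k}/\rho^{k-1}$ is nondecreasing in $k\ge 1$ with minimum $1-\tfrac{\mu}{dL}$ at $k=1$, so the first condition forces the first prefactor to be at most $\tfrac12\big(1-\tfrac{\mu}{dL}\big)^{\ceil{t/n}}$ and the second condition forces the second prefactor to be at most $\tfrac12\big(1-\tfrac{\mu}{dL}\big)^{\ceil{t/n}}$; adding them yields $\norm{\x^{t}-\x^\star}\le\big(1-\tfrac{\mu}{dL}\big)^{\ceil{t/n}}\tfrac1n\sum_{j=1}^n\norm{\x^{t-j}-\x^\star}$, which is the claim.

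The main obstacle is the epoch bookkeeping inside the second step: one must correctly pair each stale tuple $(\z_i^{t-1},\B_i^{t-1})$ with the iterate $\x^{t-j}$ that generated it, verify that both its residual and its $\sigma$-error have already undergone at least $\ceil{t/n}-1$ contraction rounds, and then check that the slack available — the gap between $\rho,\delta^{\iqn}$ and the problem constants plus the strict inequality $\rho<1-\tfrac{\mu}{dL}$ — is precisely what lifts the exponent from $\ceil{t/n}-1$ back to $\ceil{t/n}$. Beyond that, only Lemmas~\ref{lem:distance_change_single_iter}, \ref{lem:convergence_h_iqn}, \ref{lem:simple_lemma}, \ref{lem:formalize_psdness} and Banach's Lemma~\ref{lem:banach} are invoked.
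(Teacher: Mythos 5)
Your proposal is correct and follows essentially the same route as the paper: identify the stale iterates $\z_i^{t-1}$ with $\x^{t-1},\dots,\x^{t-n}$, bound $\Gamma^{t-1}\le\tfrac{1+\rho}{n\mu}$ via Banach's Lemma, convert the $\sigma$-bounds of Lemma \ref{lem:convergence_h_iqn} into norm bounds via Lemma \ref{lem:simple_lemma}, and substitute everything into \eqref{lem1eq}. The only (harmless) deviation is your final step: the paper lifts the exponent from $\ceil{t/n}-1$ to $\ceil{t/n}$ directly from the existing condition \eqref{choice_variables}, which gives the prefactor $\tfrac{1+\rho}{\mu}\big(\tfrac{\tilde L\epsilon^{\iqn}}{2}+L\delta^{\iqn}\big)\le\rho<1-\tfrac{\mu}{dL}$, so no further shrinking of $\epsilon^{\iqn},\sigma_0^{\iqn}$ (and no $\tfrac12$-splitting) is needed, whereas your extra smallness conditions are valid but unnecessary.
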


\begin{proof}
    Let \(t = jn + m + 1\), where \(0 \leq j\) and \(0 \leq m < n\). From the proof of Lemma \ref{lem:convergence_h_iqn}, we have an uniform upper bound on \(\Gamma ^ {jn + m} = \norm{\big(\sum_{i = 1} ^ {n} \B_{i} ^ {jn + m}\big) ^ {-1}}\), given by \begin{align}\nn
   	\Gamma ^ {jn + m} \leq \frac{1 + \rho}{n\mu},
   \end{align}
   and the following upper bound on \(\norm{\B_{i}^{jn + m} - \nabla ^ 2 f_i(\z_i^{jn + m})}\):  \begin{align}\nn
   	\norm{\B_{i}^{jn + m} - \nabla ^ 2 f_i(\z_i^{jn + m})} \stackrel{\Lem. \ref{lem:simple_lemma}}\leq L \sigma(\B_i ^ {jn + m}, \nabla ^ 2 f_i(\z_i ^ {jn + m})) \leq \begin{cases}
   		L\delta^{\iqn} (1 - c) ^ {\ceil{\frac{
   					jn + i}{n}}} & i \in [m], \\
   		L \delta^{\iqn} (1 - c) ^ {\ceil{\frac{(j - 1)n + i}{n}}} & i \in [n]\backslash[m].
   	\end{cases}
   \end{align}
   This gives \(\norm{\B_i ^ {jn + m} - \nabla ^ 2 f_{i}(\z_i ^ {jn + m})} \leq L\delta^{\iqn}(1 - c)^j\), for all \(i \in [n]\). 
   
   Further, from Lemma\ref{lem:convergence_h_iqn}, we also have \(\norm{\z_i ^ {jn + m} - \x^\star} \leq \rho ^ {j + 1}\norm{\x^0 - \x^\star}\), for all \(i \in [n]\) and \(\norm{\z_{i} ^ {jn + m} - \x^\star} \leq \rho ^ {j} \norm{\x^0-\x^\star}\), for all \(i \in [n]\backslash[m]\). This clearly implies \(\norm{\z_{i} ^ {jn + m} - \x^\star} \leq \rho ^ {j} \norm{\x^0 - \x^\star}\), for \(i = 1, \dots, n\).

   Applying Lemma \ref{lem:distance_change_single_iter} at \(t = jn + m + 1\) and upper bounding \(\Gamma ^ {jn + m}\), we obtain \begin{align}
   	\norm{\x ^ {jn + m + 1} - \x^\star} &\leq \frac{1 +\rho}{n\mu}\sum_{i = 1} ^ {n} \big(\frac{\tilde{L}}{2}\norm{\z_{i} ^ {jn + m} - \x^\star} + \norm{\B_{i} ^ {jn + m} - \nabla ^ 2f_{i}(\z_i^{jn + m})}\big)\norm{\z_{i} ^ {jn + m} - \x^\star}\nn, \\
   	&\stackrel{(a)} \leq \frac{1 + \rho}{n\mu}\big(\frac{\tilde{L}\epsilon}{2} + L\delta^{\iqn}\big) (1 - c) ^ {j}\sum_{i = 1} ^ {n} \norm{\z_i ^ {jn + m} - \x^\star}\nn,\\
   	&\leq \rho (1 - c) ^ {j}\frac{1}{n} \sum_{i = 1} ^ {n} \norm{\z_i ^ {jn + m} - \x^\star} \stackrel{(b)}\leq (1 - c) ^ {\ceil{\frac{t}{n}}}\frac{1}{n}\sum_{i = 1} ^ {n}\norm{\x^{t - i} - \x^\star}\nn, 
   \end{align}
   where \((a)\) follows from the bounds  \(\norm{\z_{i} ^ {jn + m} - \x ^ \star} \leq \rho ^ {j}\norm{\x^0 - \x^\star}\) and \(\norm{\B_i ^ {jn + m} - \nabla ^ 2 f_{i}(\z_i ^ {jn + m})} \leq L\delta^{\iqn} (1 - c) ^ {j}\) discussed above and \(\rho < 1 - c\). Also \((b)\) follows since \(\z_{i} ^ {jn + m} = \x ^ {jn + i}\), for all \(i \in [m]\) and \(\z_{i} ^ {jn + m} = \x ^ {jn - n + i}\), for all \(i \in [n]\backslash[m]\), which implies \(\sum_{i = 1} ^ {n} \norm{\z_i ^  {jn + m} - \x^\star} = \sum_{i = 1} ^ {n} \norm{\x ^ {t - i} - \x^\star}\). This completes the proof.
\end{proof}

The mean superlinear convergence result of Lemma \ref{lem:mean_linear_ver_I} ultimately gives a superlinear rate for SIQN. Note that an identical result as Lemma \ref{lem:mean_linear_ver_I} is given by Theorem \ref{lem:final_rate}(SLIQN). Therefore, we directly provide the proof of Lemma \ref{lem:mean_linear_ver_I} in Appendix \ref{app:final_rate} while proving Theorem \ref{lem:final_rate} for SLIQN.
\begin{lemma}\label{lem:final_rate_siqn}
	If Assumptions \ref{ass:smooth_strong} and \ref{ass:lip_hessian} hold, for any \(\rho\) such that \(0 < \rho < 1 - \tfrac{\mu}{dL}\), there exist positive constants \(\epsilon ^ {\iqn}\) and \(\sigma_0 ^ {\iqn}\) such that if \(\norm{\x ^ {0} - \x^{\star}} \leq \epsilon^{\iqn}\) and \(\sigma(\B_i ^ {0}, \nabla ^ 2 f_i(\x ^ {0})) \leq \sigma_0^{\iqn}\) for all \(i \in [n]\), for the sequence of iterates \(\{\x^{t}\}\) generated by SIQN, there exists a sequence \(\{\zeta ^ {k}\}\) such that \(\norm{\x^t - \x^\star} \leq \zeta ^ {\floor{\tfrac{t - 1}{n}}}\) for all \(t \ge 1\) and the sequence \(\{\zeta ^ {k}\}\) satisfies \begin{align}
		\zeta ^ {k} \leq \epsilon \big(1 - \tfrac{\mu}{dL}\big) ^ {\tfrac{(k + 2)(k + 1)}{2}}, 
	\end{align}
	forall \(k \ge 0\).
\end{lemma}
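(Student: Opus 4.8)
The starting point is the mean-superlinear estimate already established in Lemma~\ref{lem:mean_linear_ver_I}: for every $t\ge 1$,
\[
\norm{\x^{t}-\x^\star}\;\le\;\big(1-\tfrac{\mu}{dL}\big)^{\ceil{t/n}}\,\frac{1}{n}\sum_{i=1}^{n}\norm{\x^{t-i}-\x^\star},
\]
read --- as in the proof of that lemma --- with the convention $\x^{s}:=\x^{0}$ for $s\le 0$. Writing $q:=1-\mu/(dL)\in(0,1)$ and $k:=\floor{(t-1)/n}$, I would first note the elementary identity $\ceil{t/n}=k+1$, so that the contraction factor above equals $q^{k+1}$ and is constant over each ``epoch'' $\{kn+1,\dots,(k+1)n\}$.

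Next I would exhibit the sequence explicitly: set $\zeta^{-1}:=\epsilon^{\iqn}$ and $\zeta^{k}:=q^{k+1}\zeta^{k-1}$ for $k\ge 0$. Unrolling this recursion gives the closed form $\zeta^{k}=\epsilon^{\iqn}\prod_{j=0}^{k}q^{j+1}=\epsilon^{\iqn}q^{(k+1)(k+2)/2}$, which is precisely the bound asserted in the statement (with $\epsilon$ there denoting $\epsilon^{\iqn}$); moreover, since $q\in(0,1)$, $\{\zeta^{k}\}$ is positive and strictly decreasing. It then remains to verify that $\norm{\x^{t}-\x^\star}\le\zeta^{\floor{(t-1)/n}}$ for all $t\ge 1$, which I would do by strong induction on $t$, the base cases being covered by the hypothesis $\norm{\x^{0}-\x^\star}\le\epsilon^{\iqn}=\zeta^{-1}$.

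For the inductive step at a given $t\ge 1$ with $k=\floor{(t-1)/n}$, I would observe that every index in the averaging window, $t-i$ for $1\le i\le n$, satisfies $\floor{(t-i-1)/n}\in\{k-1,k\}$: the oldest index $t-n$ contributes the value $k-1$, and the newest $t-1$ contributes at most $k$. Hence, applying the induction hypothesis to each $\norm{\x^{t-i}-\x^\star}$ together with the monotonicity of $\{\zeta^{k}\}$ --- and, for the nonpositive window indices that only occur when $k=0$, simply using $\norm{\x^{s}-\x^\star}=\norm{\x^{0}-\x^\star}\le\epsilon^{\iqn}=\zeta^{-1}$ --- every term of the sum is at most $\zeta^{k-1}$, so the mean bound yields
\[
\norm{\x^{t}-\x^\star}\;\le\;q^{k+1}\cdot\frac{1}{n}\sum_{i=1}^{n}\zeta^{\floor{(t-i-1)/n}}\;\le\;q^{k+1}\zeta^{k-1}\;=\;\zeta^{k}.
\]
Combining this with the closed form for $\zeta^{k}$ then finishes the proof.

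The one place that needs care --- and the reason a naive one-step recursion does not close --- is that the averaging window $\{t-n,\dots,t-1\}$ straddles two consecutive epochs, so the terms $\norm{\x^{t-i}-\x^\star}$ are not all controlled by the same $\zeta$-value a priori. The resolution is the simple but crucial use of $q<1$: since $\{\zeta^{k}\}$ is decreasing, \emph{all} window terms can be bounded by the single quantity $\zeta^{k-1}$, after which the recursion $\zeta^{k}=q^{k+1}\zeta^{k-1}$ telescopes to the $q^{(k+1)(k+2)/2}$ super-linear rate. The remaining routine points are the edge regime $t\le n$ (nonpositive window indices, handled by the $\x^{s}=\x^{0}$ convention) and the bookkeeping identity $\ceil{t/n}=\floor{(t-1)/n}+1$.
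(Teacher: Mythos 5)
Your proof is correct and follows essentially the same route as the paper: both rest on the mean-superlinear bound (Lemma \ref{lem:mean_linear_ver_I}), exploit that the averaging window straddles at most two epochs so all its terms are dominated by a single epoch-level quantity, and unroll the resulting recursion $\zeta^{k}\le\big(1-\tfrac{\mu}{dL}\big)^{k+1}\zeta^{k-1}$ to the triangular-number exponent. The only (cosmetic) difference is bookkeeping: the paper defines $\zeta^{k}$ as the maximum residual over the $(k+1)$-th epoch and shows it contracts (using the linear-rate lemma to seed $\zeta^{0}\le\rho\epsilon^{\iqn}$), whereas you define $\zeta^{k}$ explicitly by the recursion with $\zeta^{-1}=\epsilon^{\iqn}$ and verify domination of the iterates by strong induction on $t$.
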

\begin{proof}
	Refer Appendix \ref{app:final_rate}.
\end{proof}


\section{EFFICIENT IMPLEMENTATION OF SLIQN}\label{app:sliqn}
We begin by showing that the update of \(\x ^ t\) in the SLIQN algorithm (Algorithm \ref{alg:eff_h_iqn}) can be carried out in \(\cO(d ^ 2)\) cost.
\subsection{Carrying out \eqref{x_new_update} in Algorithm \ref{alg:eff_h_iqn} in \(\cO(d ^ 2)\) cost}\label{update_efficiently}
We begin by defining the following variables that track the summands in \eqref{x_new_update} \begin{align}\label{def_D_ph_g}
	\bar{\D} ^ t := \sum_{i = 1} ^ {n} \D_i ^ {t}, \bi{\phi} ^ {t} := \sum_{i = 1} ^ {n} \D_{i} ^ {t}\z_{i} ^ {t}, \g ^ t := \sum_{i = 1} ^ n \nabla f_{i}(\z_i ^ t).
	\end{align} The update \eqref{x_new_update} (for time \(t + 1\)) can be expressed in terms of the defined variables as \begin{align}\label{new_x_def_vars}
	\x ^ {t + 1} = \big(\bar{\D} ^ t\big) ^ {-1}\big(\bi{\phi} ^ {t} - \g ^ t\big).
\end{align}From the updates performed by Algorithm \ref{alg:eff_h_iqn} at time \(t\), we have \begin{align}\label{update_g}
	 \g ^ t = \g ^ {t - 1} + \big(\nabla f_{i_t}(\z_{i_t} ^ {t}) - \nabla f_{i_t}(\z_{i_t} ^ {t - 1})\big).
\end{align}

Further, we can express \(\bm{\phi} ^ t\) in terms of \(\bm{\phi} ^ {t - 1}\) as follows: \begin{align}\label{update_phi}
\bm{\phi} ^ t = \sum_{i = 1, i\neq i_t} ^ {n} \omega_t \D_i ^ {t - 1}\z_{i} ^ {t - 1} + \D_{i_t} ^ t \z_{i_t} ^ t  = \omega_t \bm{\phi} ^ {t - 1} - \omega_t \D_{i_t} ^ {t - 1}\z_{i_t} ^ {t - 1} + \D_{i_t} ^ t\z_{i_t} ^ t.
\end{align}
This updating scheme can be implemented iteratively, where we only evaluate \(\bi{\phi} ^ 0, \g ^ 0\) explicitly and evaluate \(\bi{\phi} ^ t\) in \(\cO(d ^ 2)\) cost, for all \(t \geq 1\) by \eqref{update_phi} and \(\g ^ t\)  in \(\cO(d)\) cost, for all \(t \ge 1\) by \eqref{update_g}.

	Next, we demonstrate the method for updating \((\bar{\D} ^ t) ^ {-1}\). We begin by expressing \(\big(\bar{\D} ^ t\big) ^ {-1}\) in terms of \(\big(\bar{\D} ^ {t - 1}\big) ^ {-1}\) in the following manner: \begin{align}\nn
		(\bar{\D} ^ t) ^ {-1} &= \bigg(\sum_{i = 1, i \neq i_t} ^ {n - 1}\omega_t\D_{i} ^ {t - 1} + \D_{i_t} ^ t\bigg) ^ {-1} = \big(\omega_t \big(\bar{\D} ^ {t - 1} - \D_{i_t} ^ {t - 1}\big) + \D_{i_t} ^ t\big) ^ {-1}\nn, \\
		&= \omega_t ^ {-1}\big(\bar{\D} ^ {t - 1} + \omega_t ^ {-1}\D_{i_t} ^ t - \D_{i_t} ^ {t - 1}\big) ^ {-1}\label{continue_from_here}.
	\end{align}
	
Expanding the BFGS update \eqref{eff_second_modified_update}, we can express \(\omega_t ^ {-1} \D_{i_t} ^ t\) as \begin{align} \label{omega_D_Q}
	\omega_t ^ {-1} \D_{i_t} ^ t  \stackrel{\eqref{eff_second_modified_update}}=  \Q^t - \frac{\Q^t{\bar{\u}} ^ t (\Q^t{\bar{\u}} ^ t)^\T}{\ip{{\bar{\u}} ^ t}{\Q^t{\bar{\u}} ^ t}} + \frac{\nabla ^ {2} f_{i_t}(\z_{i_t}^t) {\bar{\u}} ^ t (\nabla ^ {2} f_{i_t}(\z_{i_t}^t) {\bar{\u}} ^ t)^\T}{\ip{{\bar{\u}} ^ t}{\nabla ^ {2} f_{i_t}(\z_{i_t}^t) {\bar{\u}} ^ t}},
\end{align}

where  we have used the shorthand \({\bar{\u}} ^ {t}\) for \({{\bar{\u}}}(\Q_{t}, \nabla ^ {2}f_{i_t}(\z_{i_t}^t))\). Further, \(\Q ^ t\) can be expressed as \begin{align}\label{q_update}
	\Q^{t} \stackrel{\eqref{eff_first_modified_update_}}= \D_{i_t} ^ {t - 1} + \frac{{\y_{i_t} ^ t}{\y_{i_t} ^ t}^\T}{\ip{{\y_{i_t} ^ t}}{{\s_{i_t} ^ t}}} - \frac{\D_{i_t}^{t - 1}{\s_{i_t} ^ t} (\D_{i_t}^{t - 1}{\s_{i_t} ^ t})^\T}{\ip{{\s_{i_t} ^ t}}{\D_{i_t}^{t - 1}{\s_{i_t} ^ t}}},
\end{align}	


where \({\s_{i_t} ^ t} = \z_{i_t} ^ {t} - \z_{i_t}^{t - 1}, {\y_{i_t} ^ t} = (1 + \alpha_{\ceil{\nicefrac{t}{n} - 1}}) \K ^ {t}{\s_{i_t} ^ t} =  (1 + \alpha_{\ceil{\nicefrac{t}{n} - 1}}) (\nabla f_{i_t}(\z_{i_t}^{t}) - \nabla f_{i_t}(\z_{i_t}^{t - 1}))\).

Adding \eqref{omega_D_Q} and \eqref{q_update}, we obtain \begin{align}\label{add}
	\omega_t ^ {-1} \D_{i_t} ^ t -  \D_{i_t} ^ {t - 1} &=  \frac{{\y_{i_t} ^ t}{\y_{i_t} ^ t}^\T}{\ip{{\y_{i_t} ^ t}}{{\s_{i_t} ^ t}}} - \frac{\D_{i_t}^{t - 1}{\s_{i_t} ^ t} (\D_{i_t}^{t - 1}{\s_{i_t} ^ t})^\T}{\ip{{\s_{i_t} ^ t}}{\D_{i_t}^{t - 1}{\s_{i_t} ^ t}}} - \frac{\Q^t{\bar{\u}} ^ t (\Q^t{\bar{\u}} ^ t)^\T}{\ip{{\bar{\u}} ^ t}{\Q^t{\bar{\u}} ^ t}} + \nn \\
	& \hspace{40mm}\frac{\nabla ^ {2} f_{i_t}(\z_{i_t}^t) {\bar{\u}} ^ t (\nabla ^ {2} f_{i_t}(\z_{i_t}^t) {\bar{\u}} ^ t)^\T}{\ip{{\bar{\u}} ^ t}{\nabla ^ {2} f_{i_t}(\z_{i_t}^t) {\bar{\u}} ^ t}}.
\end{align}
Continuing from \eqref{continue_from_here}, we obtain
\begin{align}
	\big(\bar{\D} ^ t\big) ^ {-1} = \omega_t ^ {-1}\bigg(\bar{\D} ^ {t - 1} + \frac{{\y_{i_t} ^ t}{\y_{i_t} ^ t}^\T}{\ip{{\y_{i_t} ^ t}}{{\s_{i_t} ^ t}}} &- \frac{\D_{i_t}^{t - 1}{\s_{i_t} ^ t} (\D_{i_t}^{t - 1}{\s_{i_t} ^ t})^\T}{\ip{{\s_{i_t} ^ t}}{\D_{i_t}^{t - 1}{\s_{i_t} ^ t}}} - \frac{\Q^t{\bar{\u}} ^ t (\Q^t{\bar{\u}} ^ t)^\T}{\ip{{\bar{\u}} ^ t}{\Q^t{\bar{\u}} ^ t}}\nn \\
	&+ \frac{\nabla ^ {2} f_{i_t}(\z_{i_t}^t) {\bar{\u}} ^ t (\nabla ^ {2} f_{i_t}(\z_{i_t}^t) {\bar{\u}} ^ t)^\T}{\ip{{\bar{\u}} ^ t}{\nabla ^ {2} f_{i_t}(\z_{i_t}^t) {\bar{\u}} ^ t}}\bigg) ^ {-1}.\nn
\end{align} 
Next, we define the following matrix: \begin{align}\nn
	\bp_{1} := \bar{\D} ^ {t - 1} + \frac{{\y_{i_t} ^ t}{{\y_{i_t} ^ t}}^\T}{\ip{{\y_{i_t} ^ t}}{{\s_{i_t} ^ t}}}  - \frac{\D_{i_t}^{t - 1}{\s_{i_t} ^ t} (\D_{i_t}^{t - 1}{\s_{i_t} ^ t})^\T}{\ip{{\s_{i_t} ^ t}}{\D_{i_t}^{t - 1}{\s_{i_t} ^ t}}} - \frac{\Q^t{\bar{\u}} ^ t (\Q^t{\bar{\u}} ^ t)^\T}{\ip{{\bar{\u}} ^ t}{\Q^t{\bar{\u}} ^ t}}.
\end{align}
Expressing \(\big(\bar{\D} ^ t\big)^{-1}\) in terms of \(\bp_{1} ^ {-1}\) we get the following:
\begin{align}
	\big(\bar{\D} ^ t \big) ^ {-1} &= \omega_t ^ {-1}\bigg(\bp_{1} + \frac{\nabla ^ {2} f_{i_t}(\z_{i_t}^t) {\bar{\u}} ^ t (\nabla ^ {2} f_{i_t}(\z_{i_t}^t) {\bar{\u}} ^ t)^\T}{\ip{{\bar{\u}} ^ t}{\nabla ^ {2} f_{i_t}(\z_{i_t}^t) {\bar{\u}} ^ t}}\bigg) ^ {-1}, \nn\\
	&\stackrel{\eqref{shermon}}= \omega_t ^ {-1}\bigg(\bp ^ {-1} - \frac{\bp_{1}^{-1}\nabla ^ {2} f_{i_t}(\z_{i_t}^t){\bar{\u}} ^ t(\nabla ^ {2} f_{i_t}(\z_{i_t}^t){\bar{\u}} ^ t)^\T \bp_{1}^{-1}}{\ip{{\bar{\u}} ^ t}{\nabla ^ {2} f_{i_t}(\z_{i_t}^t) {\bar{\u}} ^ t} + {\ip{\nabla ^ {2} f_{i_t}(\z_{i_t}^t){\bar{\u}} ^ t}{\bp_{1} ^ {-1}\nabla ^ {2} f_{i_t}(\z_{i_t}^t){\bar{\u}} ^ t}}}\bigg).\label{bar_D_inv}
\end{align}
Define the following matrix:
\begin{align}
	\bp_{2} :=\bar{\D} ^ {t - 1} + \frac{{\y_{i_t} ^ t}{\y_{i_t} ^ t}^\T}{\ip{{\y_{i_t} ^ t}}{{\s_{i_t} ^ t}}} - \frac{\D_{i_t}^{t - 1}{\s_{i_t} ^ t} (\D_{i_t}^{t - 1}{\s_{i_t} ^ t})^\T}{\ip{{\s_{i_t} ^ t}}{\D_{i_t}^{t - 1}{\s_{i_t} ^ t}}} \implies  \bp_{1} = \bp_{2} - \frac{\Q^t{\bar{\u}} ^ t (\Q^t{\bar{\u}} ^ t)^\T}{\ip{{\bar{\u}} ^ t}{\Q^t{\bar{\u}} ^ t}}. \nn
\end{align}
Expressing \(\bp_{1}^{-1}\) in terms of \(\bp_{2}^{-1}\) we get \begin{align}\label{psi_1_inv}
	\bp_{1} ^ {-1} \stackrel{\eqref{shermon}}= \bp_{2} ^ {-1} + \frac{\bp_{2} ^ {-1}\Q^t{\bar{\u}} ^ t (\Q^t{\bar{\u}} ^ t)^\T \bp_{2} ^ {-1}}{\ip{{\bar{\u}} ^ t}{\Q^t{\bar{\u}} ^ t} - \ip{\Q^t {\bar{\u}} ^ t}{\bp_{2}^{-1}\Q^t {\bar{\u}} ^ t}}.
\end{align}
Define the following matrix:
\begin{align}\nn
	\bp_{3} := \bar{\D} ^ {t - 1} + \frac{{\y_{i_t} ^ t}{\y_{i_t} ^ t}^\T}{\ip{{\y_{i_t} ^ t}}{{\s_{i_t} ^ t}}} \implies \bp_{2} = \bp_{3} - \frac{\D_{i_t}^{t - 1}{\s_{i_t} ^ t} (\D_{i_t}^{t - 1}{\s_{i_t} ^ t})^\T}{\ip{{\s_{i_t} ^ t}}{\D_{i_t}^{t - 1}{\s_{i_t} ^ t}}}.
\end{align}
Expressing \(\bp_{2}^{-1}\) in terms of \(\bp_{3} ^ {-1}\) we get the following:
\begin{align}\label{psi_2_inv}
	\bp_{2}^{-1} \stackrel{\eqref{shermon}}= \bp_{3} ^ {-1} + \frac{\bp_{3} ^ {-1}\D_{i_t}^{t - 1}{\s_{i_t} ^ t} (\D_{i_t}^{t - 1}{\s_{i_t} ^ t})^\T\bp_{3} ^ {-1}}{\ip{{\s_{i_t} ^ t}}{\D_{i_t}^{t - 1}{\s_{i_t} ^ t}} - \ip{\D_{i_t}^{t - 1}{\s_{i_t} ^ t}}{\bp_{3}^{-1}\D_{i_t}^{t - 1}{\s_{i_t} ^ t}}}.
\end{align}
Finally, using \eqref{shermon} to evaluate \(\bp_{3} ^ {-1}\), we obtain \begin{align}\label{psi_3_inv}
	\bp_{3} ^ {-1} = \big(\bar{\D} ^ {t - 1}\big) ^ {-1} - \frac{\big(\bar{\D} ^ {t - 1}\big) ^ {-1}{\y_{i_t} ^ t}{\y_{i_t} ^ t}^\T\big(\bar{\D} ^ {t - 1}\big) ^ {-1}}{\ip{{\y_{i_t} ^ t}}{{\s_{i_t} ^ t}} + \ip{{\y_{i_t} ^ t}}{\big(\bar{\D} ^ {t - 1}\big) ^ {-1}{\y_{i_t} ^ t}}}.
\end{align}
Therefore, given access to \(\big(\bar{\D} ^ {t - 1}\big) ^ {-1}\), we can evaluate \(\psi_{3} ^ {-1}\) in \(\mathcal{O}(d ^ {2})\) time. Similarly, given access to \(\bp_{3} ^ {-1}\), we can evaluate \(\bp_{2} ^ {-1}\) in \(\cO(d^2)\) time. Continuing similarly, we can evaluate \(\bp_{1} ^ {-1}\) and \(\big(\bar{\D} ^{t}\big) ^ {-1}\) in \(\cO(d^2)\) time. This scheme can be enumerated iteratively where we only compute \(\big(\bar{\D} ^ 0\big) ^ {-1}\) explicitly and evaluate \(\big(\bar{\D} ^ t\big) ^ {-1}, \forall t \ge 1\) by the steps  \eqref{psi_3_inv}, \eqref{psi_2_inv}, \eqref{psi_1_inv}, and \eqref{bar_D_inv}.   Therefore, the update \eqref{x_new_update} in Algorithm \ref{alg:eff_h_iqn} can be performed in \(\cO(d^2)\) time. 

\subsection{Efficient Implementation of Algorithm \ref{alg:eff_h_iqn} in \(\cO(d ^ 2)\) cost}\label{subsec:lazy_algorithm}
By lazily carrying out the scaling of the \(\D\)'s, i.e., only scaling when they are used, we can improve the per-iteration complexity of SLIQN to \(\cO(d ^ 2)\). The resulting algorithm is specified by the following pseudo code:
	\begin{algorithm}
	\caption{Sharpened Lazy Incremental Quasi-Newton (SLIQN)}
	\begin{algorithmic}[1]
 \STATE \textbf{Function} \{Sherman-Morrison\} \{$\A^{-1}, {{\u}}, \v$\}
     \STATE \hspace{3mm} \textbf{return} $\A^{-1} - \tfrac{\A^{-1}{{\u}}\v^\T\A^{-1}}{1 + \ip{\v}{\A^{-1}{{\u}}}}$
     
     \STATE \textbf{EndFunction}
     \vspace{3mm}
		\STATE\textbf{Initialize:} Initialize \(\{\z_{i}, \D_{i}\}_{i = 1} ^ {n}\) similar to Algorithm \ref{subsec:lazy_algorithm};
		\STATE Evaluate \(\bar{\D}  := \big(\sum_{i = 1} ^ {n}\D_i\big) ^ {-1}, \bm{\phi} :=  \sum_{i = 1} ^ {n} \D_i\z_i \), and \(\g := \sum_{i = 1} ^ {n} \nabla f_i (\z_i)\);
        \STATE Maintain running auxiliary variables \(\x, {\bar{\u}}, \y, \s, \Q, \D^\text{old}, \K\); // \textsc{\(\x\) keeps a track of \(\x ^ t\), \(\bar{\u}\) keeps a track of the greedy vector, \(\y\) keeps a track of \(\nabla f_{i_t}(\z_{i_t} ^t) - \nabla f_{i_t}(\z_{i_t} ^ {t - 1})\), \(\s\) keeps a track of \(\z_{i_t} ^ t - \z_{i_t} ^ {t - 1}\), whereas \(\Q, \D^\text{old}, \K\) keep track of the intermediate matrices}
		\STATE\textbf{while} \textit{not converged:}
		\STATE\hspace{3mm}Current index to be updated is \(i_t \leftarrow (t - 1) \mod n + 1;\)
		\STATE\hspace{3mm}Update \(\x\) as  \(\x \leftarrow \big(\bar{\D}\big)\big(\bm{\phi} - \g\big);\)
   \STATE\hspace{3mm}Update \(\s \leftarrow \x -\z_{i_t};\)
   \STATE\hspace{3mm}Update \(\y \leftarrow \nabla f_{i_t}(\x^t) -\nabla f_{i_t}(\z_{i_t});\)
   \STATE\hspace{3mm}Compute \(\omega_t\); // \textsc{\(\omega_t = (1 + \alpha_{\ceil{t / n}}) ^ 2\) if \(t\mod n = 0\) and \(1\) otherwise}
		\STATE\hspace{3mm}Update \(\Q\) as \(\Q \leftarrow \bfgs{(1 + \alpha_{\ceil{t / n - 1}}) ^ 2\D_{i_t}, (1 + \alpha_{\ceil{t / n - 1}})\K, \s}\), where  // \textsc{Lazy Step} \[\K \leftarrow \int_{0} ^ {1} \nabla ^ 2 f_{i_t}(\z_{i_t} + \tau (\x ^ t - \z_{i_t})) d\tau.\]

\STATE\hspace{3mm}Update \({\bar{\u}}\) as \({\bar{\u}} \leftarrow \argmax_{{{\u}} \in \{\e_{i}\}_{i = 1} ^ d} \frac{\ip{{{\u}}}{\Q {{\u}}}}{\ip{{{\u}}}{\nabla ^ 2 f_{i_t}(\x ^ t){{\u}}}}\);
  
  \STATE\hspace{3mm}Update \(\D^{\text{old}}\) as \(\D ^ \text{old} \leftarrow \D_{i_t}\);
  \STATE\hspace{3mm}Update \(\D_{i_t}\) as \(
  		    \D_{i_t} \leftarrow \bfgs{\Q, \nabla ^ 2 f_{i_t}(\x ^ t), {\bar{\u}}}
  		\);
		\STATE\hspace{3mm}Update \(\bi{\phi}\) as \(\bi{\phi} \leftarrow \omega_t (\bm{\phi}  - \D^{\text{old}}\z_{i_t}) + \D_{i_t} \x ^ {t}\);

			\STATE\hspace{3mm}Update \(\g \) as  \(\g \leftarrow \g  + \big(\nabla f_{i_t}(\x ^ t) - \nabla f_{i_t}(\z_{i_t})\big)\);
    \STATE\hspace{3mm}Update \(\bar{\D}\) as 
                \(\bar{\D} \leftarrow \text{Sherman-Morrison}
                (
                    \bar{\D},
                    \y, \frac{1}{\ip{\y}{\s}}\y
                )
                \);
                
                 \STATE\hspace{3mm}Update \(\bar{\D}\) as 
                \(\bar{\D} \leftarrow \text{Sherman-Morrison}
                (
                    \bar{\D},
                    -\D^{\text{old}}\s, \frac{1}{\ip{\s}{\D^\text{old}\s}}\D^{\text{old}}\s
                )
                \);
                \STATE\hspace{3mm}Update \(\bar{\D}\) as 
                \(\bar{\D} \leftarrow \text{Sherman-Morrison}
                (
                    \bar{\D},
                    -\Q{\bar{\u}}, \frac{1}{\ip{{\bar{\u}}}{\Q{\bar{\u}}}}\Q{\bar{\u}}
                )
                \);
                \STATE\hspace{3mm}Update \(\bar{\D}\) as 
                \(\bar{\D} \leftarrow \omega_{t}^{-1}\text{Sherman-Morrison}
                (
                    \bar{\D},
                    \nabla ^ 2 f_{i_t}(\z_{i_t}){\bar{\u}},  \frac{1}{\ip{{\bar{\u}}}{ \nabla ^ 2 f_{i_t}(\z_{i_t}){\bar{\u}}}}\nabla ^ 2 f_{i_t}(\z_{i_t}){\bar{\u}}
                )
                \);
                \STATE\hspace{3mm}Update \(\z_{i_t}\) as \(\z_{i_t} \leftarrow \x\);
		\STATE Increment the iteration counter \(t\);
		\STATE\textbf{end while}

	\end{algorithmic}
	\label{alg:sl_iqn}
\end{algorithm}


\section{CONVERGENCE ANALYSIS OF SLIQN}\label{sec:convergence_sliqn}
\subsection{Proof of Lemma \ref{lem:distance_change_single_iter_ver_II}} \label{app:distance_change_single_iter_ver_II}

For all \(t \geq 0\), we define \(\H ^ {t} := \big(\sum_{i = 1} ^ {n} \D_{i} ^ t \big) ^ {-1}\). 
From the update for $\x^t$ \eqref{x_update}, we have \begin{align}\nn
  \x ^ t - \x ^ {*} & = \H ^ {t - 1}\bigg(\sum_{i = 1} ^ {n} \D_{i} ^ {t - 1}\z_{i} ^ {t - 1} - \sum_{i = 1} ^ {n} \nabla f_{i}(\z_{i} ^ {t - 1})\bigg) - \x ^ {*},                                                                                                   \\
                    & \stackrel{(a)}= \H ^ {t - 1} \bigg( \sum_{i = 1} ^ {n}\D_{i} ^ {t - 1}(\z_{i} ^ {t - 1} - \x ^ \star) - \sum_{i = 1} ^ {n} \nabla f_{i}(\z_{i} ^ {t - 1})\bigg),\nn                                                \\
                     & \stackrel{(b)}= \H ^ {t - 1} \bigg( \sum_{i = 1} ^ {n}\D_{i} ^ {t - 1}(\z_{i} ^ {t - 1} - \x ^ \star) - \sum_{i = 1} ^ {n} (\nabla f_{i}(\z_{i} ^ {t - 1})-  \nabla f_{i}(\x ^ \star))\bigg),\nn                                                \\
                    & \stackrel{(c)}= \H ^ {t - 1} \bigg( \sum_{i = 1} ^ {n}\D_{i} ^ {t - 1}(\z_{i} ^ {t - 1} - \x ^ \star) - \sum_{i = 1} ^ {n} \int_{0} ^ {1} \nabla ^ 2 f(\x ^ {*} + (\z_{i} ^ {t - 1} - \x ^ \star)v)(\z_{i} ^ {t - 1} - \x ^ \star) dv\bigg),\nn \\
                    & \stackrel{(d)}= \H ^ {t - 1} \bigg(\sum_{i = 1} ^ {n}\big(\D_{i} ^ {t - 1} - \nabla ^ 2 f_{i}(\z_i ^ {t - 1})\big)(\z_{i} ^ {t - 1} - \x ^ \star) + \nn                                                                                         \\
                    & \hspace{30mm}\sum_{i = 1} ^ {n} \int_{0} ^ {1} (\nabla ^ 2f_{i}(\z_{i} ^ {t - 1}) -  \nabla ^ 2f(\x ^ {*} + (\z_{i} ^ {t - 1} - \x ^ \star)v))(\z_{i} ^ {t-1} - \x^\star) dv\bigg)\nn.
\end{align}
The equality \((a)\) follows from the definition \(\H ^ {t - 1} = \big(\sum_{i = 1} ^ {n}\D_i ^ {t - 1}\big) ^ {-1}\). 
The equality \((b)\) uses the fact that \(\nabla f(\x ^ {*}) = \frac{1}{n}\sum_{i = 1} ^ {n} \nabla f_{i}(\x ^ \star) = \mathbf{0}\). 
The equality (c)
follows from the Fundamental Theorem of Calculus, 
and the equality \((d)\) follows by adding and subtracting \(\sum_{i = 1} ^ {n} \nabla ^ 2 f_{i}(\z_i ^ {t - 1})(\z_{i} ^ {t - 1} - \x ^ \star)\). Taking norm on both sides and applying the Triangle inequality, we obtain\begin{align}
  \norm{\x ^ t - \x ^ {*}} & \stackrel{\Delta}\leq \norm{\H ^ {t - 1}} \bigg(\sum_{i = 1} ^ {n}\norm{\big(\D_{i} ^ {t - 1} - \nabla ^ 2 f_{i}(\z_i ^ {t - 1})\big)}\norm{\z_{i} ^ {t - 1} - \x ^ \star} +  \nn                                                                                                              \\
                           & \hspace{20mm}\sum_{i = 1} ^ {n} \norm{\int_{0} ^ {1} (\nabla ^ 2f_{i}(\z_{i} ^ {t - 1}) -  \nabla ^ 2f(\x ^ {*} + (\z_{i} ^ {t - 1} - \x ^ \star)v))(\z_{i} ^ {t-1} - \x^\star) dv}\nn\bigg),                                                                                                  \\
                           & \hspace{-17.5mm}\stackrel{(a)}\leq \norm{\H ^ {t - 1}} \bigg(\sum_{i = 1} ^ {n}\norm{\big(\D_{i} ^ {t - 1} - \nabla ^ 2 f_{i}(\z_i ^ {t - 1})\big)}\norm{\z_{i} ^ {t - 1} - \x ^ \star} +  \nn                                                                                                 \\
                           & \sum_{i = 1} ^ {n} \int_{0} ^ {1}\norm{ (\nabla ^ 2f_{i}(\z_{i} ^ {t - 1}) -  \nabla ^ 2f(\x ^ {*} + (\z_{i} ^ {t - 1} - \x ^ \star)v))(\z_{i} ^ {t-1} - \x^\star) }dv\nn\bigg),                                                                                                               \\
                           & \hspace{-17.5mm}\leq \norm{\H ^ {t - 1}} \bigg(\sum_{i = 1} ^ {n}\norm{\big(\D_{i} ^ {t - 1} - \nabla ^ 2 f_{i}(\z_i ^ {t - 1})\big)}\norm{\z_{i} ^ {t - 1} - \x ^ \star} +  \nn                                                                                                               \\
                           & \hspace{-2.5mm}\sum_{i = 1} ^ {n} \int_{0} ^ {1}\norm{ (\nabla ^ 2f_{i}(\z_{i} ^ {t - 1}) -  \nabla ^ 2f(\x ^ {*} + (\z_{i} ^ {t - 1} - \x ^ \star)v))}\norm{(\z_{i} ^ {t-1} - \x^\star)}dv\nn\bigg),                                                                                          \\
                           & \hspace{-17.5mm}\stackrel{(b)} \leq  \norm{\H ^ {t - 1}}\bigg( \sum_{i = 1} ^ {n}\norm{\big(\D_{i} ^ {t - 1} - \nabla ^ 2 f_{i}(\z_i ^ {t - 1})\big)}\norm{\z_{i} ^ {t - 1} - \x ^ \star} + \tilde{L}\int_{0} ^ {1}(1 - v)dv\sum_{i = 1} ^ {n}\norm{\z_{i} ^ {t - 1} - \x ^ {*}} ^ 2 \nn\bigg), \\
                           & \hspace{-17.5mm}\leq   \norm{\H ^ {t - 1}}\bigg( \sum_{i = 1} ^ {n}\norm{\big(\D_{i} ^ {t - 1} - \nabla ^ 2 f_{i}(\z_i ^ {t - 1})\big)}\norm{\z_{i} ^ {t - 1} - \x ^ \star} + \frac{\tilde{L}}{2}\sum_{i = 1} ^ {n}\norm{\z_{i} ^ {t - 1} - \x ^ {*}} ^ 2 \nn\bigg).
\end{align}
The inequality \((a)\) follows from the known result that if 
$g \colon \mathbb{R} \rightarrow \mathbb{R}^d$ is a continuous function, then
\(\norm{\int_0^1 g(v) \,dv} \leq \int_0^1 \norm{g(v)} \,dv\), 
and the inequality \((b)\) follows from the assumption that the Hessian of $f_i$ is $\tilde{L}-$ Lipschitz (\ref{ass:lip_hessian}), i.e. 
\( \norm{\nabla ^ 2f_{i}(\z_{i} ^ {t - 1}) -  \nabla ^ 2f(\x ^ {*} + (\z_{i} ^ {t - 1} - \x ^ \star)v)} \leq \tilde{L}(1 - v)\norm{\z_{i} ^ {t - 1} - \x ^ \star}\). This completes the proof.

\subsection{Proof of Lemma \ref{lem:convergence_efficient_h_iqn}}\label{app:convergence_efficient_h_iqn}
For a given \(\rho\) that satisfies \(0 < \rho < 1 - \frac{\mu}{dL}\), we choose 
\(\epsilon, \delta > 0\) such that they satisfy
\begin{align}\label{choice_variables_II}
\tfrac{1}{\mu}
\left(
    \tfrac{\tilde{L}\epsilon}{2} + L\delta (1 + M\sqrt{L}\epsilon) ^ {2} + L^{\tfrac{3}{2}} M\epsilon (2 + M\sqrt{L}\epsilon)
\right)
\leq 
\frac{\rho}{1 + \rho} < 1.
\end{align}

\begin{rem}
	   Indeed, there exists positive constants \(\epsilon, \delta > 0\) that satisfy \ref{choice_variables_II}.
 Recall from the premise that $\delta$ is a function of $\epsilon, \sigma_0$, and we can define the left-hand-side of \ref{choice_variables_II} as a function $h(\epsilon, \sigma_0)$ as
 \begin{align*}
     h(\epsilon, \sigma_0) 
     \coloneqq
     \tfrac{1}{\mu}
     \left(
         \tfrac{\tilde{L}\epsilon}{2} 
         + 
         Le ^ {\frac{4M\sqrt{L}\epsilon}{1 - \rho}} 
         \big(
            \sigma_0 
            + 
            \epsilon \tfrac{4Md\sqrt{L}}{1 - \rho (1 - \tfrac{\mu}{dL})^{-1}}
        \big)
        (1 + M\sqrt{L}\epsilon) ^ {2} 
        + 
        L^{\tfrac{3}{2}} M\epsilon (2 + M\sqrt{L}\epsilon)
     \right).
 \end{align*}
 Fix ${\sigma_0 = \tfrac{\mu}{2L} (\tfrac{\rho}{1+\rho}) > 0}$. It is easy to see that $h(\epsilon,\sigma_0)$ is continuous and monotonically increasing in \(\epsilon\).
 Also, note that $h(0, \tfrac{\mu}{2L} (\tfrac{\rho}{1+\rho})) = \tfrac{\rho}{2(1+\rho)}$ and 
 $\lim_{\epsilon \rightarrow \infty} h(\epsilon, \tfrac{\mu}{2L}(\tfrac{\rho}{1+\rho})) = \infty$.
 We can therefore apply the Intermediate Value Theorem (IVT) to guarantee that there exists $\epsilon > 0$ such that $h(\epsilon, \tfrac{\mu}{2L} (\tfrac{\rho}{1+\rho})) \le \tfrac{\rho}{1+\rho}$.
\end{rem}
Similar to Lemma \ref{lem:convergence_h_iqn}, we use Induction on $t$ to prove the result.

\textbf{{Base case:}} 
At \(t = 1\), from Lemma \ref{lem:distance_change_single_iter_ver_II}, we have \begin{align}
\norm{\x^1 - \x^\star} \leq \frac{\tilde L\Gamma ^ 0}{2} \sum_{i = 1} ^ {n} \norm{\z_i ^ 0 - \x^\star} ^ 2 + \Gamma^0 \sum_{i = 1} ^ {n} \norm{\D_i ^ 0 - \nabla ^ 2 f_i(\z_i ^ 0)} \norm{\z_i^0 - \x^\star}.\nn
\end{align}
From the initialization, we have that 
\(\D_{i} ^ {0} = (1 + \alpha_0) ^ 2 \I_{i} ^ 0\) and
\(\z_{i}^{0} = \x^0\), for all \(i \in [n]\),
and
\(\norm{\x^0 - \x^\star} \leq \epsilon\).
Substituting these in the above expression, we obtain
\begin{align}
\norm{\x^1 - \x^\star} 
&\leq 
\Gamma ^ 0 \bigg(
    n\tfrac{\tilde L\epsilon}{2}  + \sum_{i = 1} ^ {n}\norm{(1 + \alpha_0) ^ 2\I_i ^ 0 - \nabla ^ 2 f_i(\z_i ^ 0)}
\bigg) \norm{\x^0 - \x^\star}
, \label{first_step_1}\\ 
& \overset{(a)}{\le}  
\Gamma ^ 0
\bigg(
    n\tfrac{\tilde L\epsilon}{2}  + (1 + \alpha_0) ^ 2
\sum_{i = 1} ^ {n}\norm{\I_i ^ 0 - \nabla ^ 2 f_i(\z_i ^ 0)} + \alpha_0 (\alpha_0 + 2)\sum_{i = 1}^ {n}\underbrace{\norm{\nabla ^ 2f_i(\z_i ^ 0)}}_{\leq L}\bigg)\norm{\x^0 - \x^\star}\nn, \\
&\overset{(b)}{\le} \Gamma ^ 0\
\bigg(
    n\tfrac{\tilde L\epsilon}{2}  + nL(1 + \alpha_0) ^ 2\sigma_0 + nL \alpha_0 (\alpha_0 + 2)
\bigg)\norm{\x^0 - \x^\star}\nn, \\
&\le \Gamma ^ 0
\bigg(
    n\tfrac{\tilde L\epsilon}{2}  + nL(1 + M\sqrt{L}\epsilon) ^ 2\sigma_0 + nL^{\frac{3}{2}} M\epsilon (M\sqrt{L}\epsilon + 2)\bigg)
    \norm{\x^0 - \x^\star} \nn, \\
&\le \Gamma ^ 0
\bigg(
    n\tfrac{\tilde L\epsilon}{2}  + nL(1 + M\sqrt{L}\epsilon) ^ 2\delta + nL^{\frac{3}{2}} M\epsilon (M\sqrt{L}\epsilon + 2)\bigg)
    \norm{\x^0 - \x^\star},\label{end_step_1}
\end{align}
where \((a)\) follows by adding and subtracting \((1 + \alpha_0)^ 2 \nabla^2f_{i}(\z_i ^ 0)\) to \((1 + \alpha_0) ^ 2\I_i ^ 0 - \nabla ^ 2 f_i(\z_i ^ 0)\) and 
applying the Triangle inequality.
To see why inequality \((b)\) is true, first recall from the initialization that
\(\sigma(\I_i ^ {0}, \nabla ^ 2 f_i(\z_i ^  0)) \leq \sigma_0\) and 
\(\I_{i} ^ 0 \succeq \nabla ^ 2 f_i(\z_i ^ 0)\). 
Applying Lemma \ref{lem:simple_lemma}, we have  \(\norm{\I_i ^ {0} - \nabla ^ 2 f_i(\z_i ^  0)} \le L\sigma(\I_i ^ {0}, \nabla ^ 2 f_i(\z_i ^  0)) \leq L\sigma_0\).
	
We now upper bound \(\Gamma ^ 0\).
Define 
\(
    \X^0 \coloneqq \frac{1}{n}\sum_{i = 1} ^ {n} \D_{i} ^ 0
\)
and 
\(\Y^{0} \coloneqq \frac{1}{n}\sum_{i = 1} ^ {n} \nabla ^ 2 f_{i}(\z_i ^ 0)\). 
Then, we have
\begin{align}\nn
\frac{1}{n} \sum_{i = 1} ^ {n} \norm{\D_{i} ^ 0 - \nabla ^ 2 f_{i}(\z_i^0)} \stackrel{\Delta} \geq\norm{\X^0 - \Y^0} = \norm{(\Y^{0})((\Y^0)^{-1}\X^0 - \I)} \stackrel{(a)}\geq \mu\norm{(\Y^0)^{-1}\X^0 - \I},
\end{align}
where the inequality \((a)\) follows 
from Assumption \ref{ass:smooth_strong} which implies \(\mu \I \preceq \frac{1}{n} \sum_{i = 1} ^ n \nabla ^ 2 f_{i}(\z_i ^ 0) = \Y^0\). 
By tracking the parts of steps \eqref{first_step_1}-\eqref{end_step_1} which  
bound $\norm{\D_i ^ 0 - \nabla ^ 2 f_i(\z_i ^ 0)}$, we get
\begin{align}\nn
\sum_{i = 1} ^ {n} \norm{\D_{i} ^ 0 - \nabla ^ 2 f_{i}(\z_i^0)} \leq nL(1 + M\sqrt{L}\epsilon) ^ 2\delta + nL ^ {\frac{3}{2}}M\epsilon(M\sqrt{L}\epsilon + 2).
\end{align}
From the above two chain of inequalities, we obtain
\begin{align*}
    \norm{(\Y^0)^{-1}\X^0 - \I}
    \leq \tfrac{1}{\mu}\bigg(
        L(1 + M\sqrt{L}\epsilon) ^ 2 \delta + L ^ {\frac{3}{2}}M\epsilon(M\sqrt{L}\epsilon + 2) \bigg) \stackrel{\eqref{choice_variables_II}} < \frac{\rho}{1 + \rho}.
\end{align*}
We can now upper bound $\Gamma^0$ using Banach's Lemma \ref{lem:banach}. Consider the matrix $(\Y^0)^{-1}\X^0 - \I$ and note that it satisfies the requirement of \ref{lem:banach} from the above result. Therefore, we have
\begin{align}
& \norm{(\X^{0})^{-1}\Y^0}
= \norm{\big(\I + ((\Y^0)^{-1}\X^0) - \I)\big)^{-1}} \stackrel{\Lem. \ref{lem:banach}}{\leq} \frac{1}{1 - \norm{ (\Y^0)^{-1}\X^0 - \I }} \leq 1 + \rho.\nn
\end{align}
Recall that \(\mu \I \preceq \Y^0\).
Using this and the previous result, we can upper bound $\norm{(\X^{0})^{-1}}$ as
$\mu \norm{(\X^{0})^{-1}} \leq \norm{(\X^{0})^{-1}\Y^0} \leq 1 + \rho.\nn$
This gives us, \(\Gamma^0 = \frac{1}{n}\norm{(\X^{0})^{-1}} \leq \frac{1 + \rho}{n\mu}\).

Substituting this bound on $\Gamma^0$ in \ref{end_step_1}, we obtain
\begin{align}
\norm{\x^{1} - \x^\star}\leq \tfrac{1 + \rho}{\mu}\big(\tfrac{\tilde L\epsilon}{2}  + L(1 + M\sqrt{L}\epsilon) ^ 2\delta + L^{\frac{3}{2}} M\epsilon (M\sqrt{L}\epsilon + 2)\big)\norm{\x^0 - \x^\star} \stackrel{\eqref{choice_variables_II}}\leq \rho \norm{\x^0- \x^\star}.\nn
\end{align}
To complete the base step, we now upper bound 
$\sigma(\omega_1^{-1}\D^{1}_1, \nabla ^ 2 f_{1}(\z_{1} ^ {1}))$, where $\omega_1 = 1$ .
Applying Lemma \ref{lem:sig} with parameters as
\(
    T =  1, \tilde{\x}= \x^\star, \P^{0} = (1 + \alpha_0) ^ 2 \I_1 ^ 0 = \D_1 ^ 0
\) 
(refer to Remark \ref{change_r} we made for Lemma \ref{lem:sig},
where we stated that the results of Lemma \ref{lem:sig} remain unchanged on redefining \(r_k := 2\sqrt{L}\rho ^ {k - 1}\epsilon \ge 2\sqrt{L}\rho ^ {k - 1}\norm{\x^0 - \x^\star}\)), we get

\begin{align}
\sigma(\D^{1}_1, \nabla ^ 2 f_{1}(\z_{1} ^ {1})) \leq \big(1 - c\big)e ^ {\frac{4M\sqrt{L}\epsilon}{1 - \rho}}\big(\underbrace{\sigma(\I_{1} ^ 0, \nabla ^ 2 f_{1}(\z_1 ^ 0))}_{\leq \sigma_0} + \epsilon \frac{4Md\sqrt{L}}{1 - \frac{\rho}{1 - c}}\big) \leq (1 - c)\delta.\nn
\end{align}
Finally, as a technical remark, the proof of Lemma \ref{lem:sig} already shows that 
$\D^{1}_1 \succeq \nabla ^ 2 f_{1}(\z_{1} ^ {1})$, and therefore 
$\sigma(\D^{1}_1, \nabla ^ 2 f_{1}(\z_{1} ^ {1}))$ is well defined. 
This completes the proof for for the base case.  

We now prove that \eqref{contraction_verII} and \eqref{sigma_rel_verII} hold for any \(t > 1\) by induction. 

\textbf{Induction hypothesis (IH):} Let \eqref{contraction_verII} and \eqref{sigma_rel_verII} hold for all $t \in [jn+m]$  for some \(j \geq 0\) and \( 0 \leq m < n\).
	 
\textbf{Induction step:} 
We then prove that \eqref{contraction_verII} and \eqref{sigma_rel_verII} also hold for \(t = jn + m + 1\).
Recall that the tuples are updated in a deterministic cyclic order, and at the current time $t$, we are in the $j^{\text{th}}$ cycle and have updated the $m^{\text{th}}$ tuple. Therefore, it is easy to note that 
\(\z_{i}^{jn + m} = \x ^ {jn + i}\), for all \(i \in [m]\), which refer to the tuples updated in this cycle, and \(\z_{i} ^ {jn + m} = \x^{jn - n + i}\) for all $i \in [n]\backslash[m]$, which refer to the tuples updated in the previous cycle. 	From the induction hypothesis, we have
\begin{align}\label{induction_hyp_eff_1}
	\norm{\z_i ^ {jn + m} - \x ^ \star} \leq \begin{cases}
	\rho ^ {\ceil{\frac{jn + i}{n}}} \norm{\x^{0} - \x^{\star}}    & i \in [m], \\
	\rho ^ {\ceil{\frac{(j - 1)n + i)}{n}}} \norm{\x^0 - \x^\star} & i \in [n]\backslash[m].    
	\end{cases}
\end{align}
We will execute the induction step in three distinct stages. 
In the first stage we will establish an upper bound on 
\(\sum_{i = 1} ^ {n} \| \D_i ^ {jn + m} - \nabla ^ 2 f_i(\z_i ^ {jn + m})\|\).
In the second stage, we will use the previous result and Lemma 
\ref{lem:distance_change_single_iter_ver_II} to bound 
\(\|\z_{m + 1} ^ {jn + m + 1} - \x^\star\|\).
In the final stage, we will 
prove the linear convergence of the updated Hessian approximation, i.e., 
\(\sigma(\omega_{jn + m + 1} ^ {-1}\D_{m + 1} ^ {jn + m + 1}, \nabla ^ 2 f_{m + 1}(\z_{m + 1} ^ {jn + m + 1})) \leq (1 - c) ^ {j + 1} \delta\).

Since \(\D_{i_t} ^ t\) is updated in a different manner for \(t \bmod n \neq 0\) and \(t \bmod n = 0\), we split the first stage into two cases corresponding to 
\(t \bmod n \neq 0\) and \(t \bmod n = 0\).

{\textbf{Stage 1, Case 1:}} ${t \bmod n \neq 0}$

Since \(t = jn + m + 1\), this case is equivalent to considering \(0 \leq m < n - 1\).
From the structure of the cyclic updates and the pre-multiplication of the scaling factor, it is easy to note that 
\(\D_{i} ^ {jn + m} = \D_i ^ {jn + i}\), for all \(i \in [m]\), 
\(\D_{i} ^ {jn + m} = (1 + M\sqrt{L}\epsilon \rho ^ j) ^ 2\D_i ^ {jn - n + i}\), for all $i \in [n-1]\backslash[m]$, 
and \(\D_{i} ^ {jn + m} = \D_i ^ {jn}\), for \(i = n\). 

We want to bound 
\(\sum_{i = 1} ^ {n} \| \D_i ^ {jn + m} - \nabla ^ 2 f_i(\z_i ^ {jn + m})\|\).
For all $i \in [m]$, from the induction hypothesis, we have
\begin{align}
    &\sigma(
    \D_i ^ {jn + m}, \nabla ^ 2 f_i(\z_i ^ {jn + m}))
    \leq (1-c) ^ {\ceil{\frac{jn + i}{n}}} \delta, \nn
    \\
    \overset{\Lem. {\ref{lem:simple_lemma}}}{\implies}
    &
    \norm{\D_{i}^{jn + m} - \nabla ^ 2 f_i(\z_i^{jn + m})} 
    = 
    \norm{\D_i ^ {jn + i} - \nabla ^ 2f_{i}(\x ^ {jn + i})}  
    \leq 	L(1-c) ^ {\ceil{\frac{jn + i}{n}}} \delta.
    \label{i=ind_hyp}
\end{align}

For all \(i \in [n-1] \backslash [m]\), we follow in the footsteps of \ref{first_step_1}-\ref{end_step_1} from the base case to get
\begin{align}\nn
\norm{\D_i ^ {jn + m} - \nabla^2 f_i(\z_i ^ {jn + m})} 
&= \norm{(1 + M\sqrt{L}\epsilon \rho ^ {j}) ^ 2 \D_i ^ {jn - n + i} - \nabla ^ 2 f_i(\x ^ {jn - n + i})}\nn,
\end{align}
\begin{multline*}
    \hspace{8mm}\leq (1 + M \sqrt{L}\epsilon \rho ^ j) ^ {2}
    \norm{\D_i ^ {jn - n + i} - \nabla ^ 2 f_i(\x ^ {jn - n + i})} +\\
    M\sqrt{L}\epsilon \rho ^ {j}(2 + M\sqrt{L}\epsilon \rho ^ {j})\underbrace{\norm{\nabla ^ 2 f_i(\x ^ {jn - n + i})}}_{\le L},\nn
\end{multline*}
\vspace{-3.5mm}
\begin{align}
    &\leq (1 + M \sqrt{L}\epsilon \rho ^ j) ^ {2}\norm{\D_i ^ {jn - n + i} - \nabla ^ 2 f_i(\x ^ {jn - n + i})} + ML ^ {\frac{3}{2}}\epsilon \rho ^ {j}(2 + M\sqrt{L}\epsilon \rho ^ j),\nn
    \\
    &\stackrel{(a)}\leq (1 + M \sqrt{L}\epsilon \rho ^ j) ^ {2}L\delta(1 - c)^ {j} + ML ^ {\frac{3}{2}}\epsilon \rho ^ {j}(2 + M\sqrt{L}\epsilon \rho ^ j) 
\label{i=m+1_n-1}.
\end{align}
The inequality \((a)\) follows from 
\(\norme{\D_i ^ {jn - n + i} - \nabla ^ 2 f_i(\x ^ {jn - n + i})} \leq (1 - c) ^ j\delta\), 
which can be established from the induction hypothesis in a similar way as we did for the case with $i \in [m]$.
Next, for $i=n$, we have
\begin{align}
\norm{\D_i ^ {jn + m} - \nabla^2 f_i(\z_i ^ {jn + m})} & =  \norm{\D_i ^ {jn} - \nabla^2 f_i(\z_i ^ {jn})},\nn                                                      \\
& = \norm{\omega_{jn} (\omega_{jn} ^ {-1}\D_i ^ {jn} - \nabla^2 f_i(\z_i ^ {jn})) + (\omega_{jn} - 1)\nabla^2 f_i(\z_i ^ {jn + m})}, \nn 
\\
& \stackrel{\Delta}\leq \omega_{jn}\norme{\omega_{jn} ^ {-1}\D_i ^ {jn} - \nabla^2 f_i(\z_i ^ {jn})} + \abs{\omega_{jn} - 1} \norme{\nabla^2 f_i(\z_i ^ {jn})}, \nn \\
& \stackrel{(a)}\leq (1 + M\sqrt{L}\epsilon \rho ^ j)L\delta (1 - c) ^ j +  
\abs{\omega_{jn} - 1} \norme{\nabla^2 f_i(\z_i ^ {jn})}\nn,  
\\
&\stackrel{(b)}\leq (1 + M\sqrt{L}\epsilon \rho ^ j)L\delta (1 - c) ^ j +  ML ^ {\frac{3}{2}}\epsilon \rho ^ {j}(2 + M\sqrt{L}\epsilon \rho ^ j)\label{i=n}.
\end{align}
To see the deduction \((a)\), 
we follow in the footsteps of the case with $i \in [m]$. 
Concretely,
from induction and Lemma \ref{lem:simple_lemma}, 
\(\norme{
    \omega_{jn} ^ {-1}\D_i ^ {jn} - \nabla^2 f_i(\z_i ^ {jn})
    } \leq 
    L\sigma (\omega_{jn} ^ {-1}\D_i ^ {jn}, \nabla^2 f_i(\z_i ^ {jn})) 
    \leq L\delta(1 - c) ^ j
\).
Inequality \((b)\) uses the fact that
\(\omega_{jn} = 1 + M\sqrt{L}\epsilon \rho ^ j\) and
\(\|\nabla ^ 2 f_i(\z_i ^ {jn})\| \leq L\) (\(\because\) Assumption \ref{ass:smooth_strong}).

We can now bound the quantity \(\sum_{i = 1} ^ {n} \| \D_i ^ {jn + m} - \nabla ^ 2 f_i(\z_i ^ {jn + m})\|\) using\eqref{i=ind_hyp}, \eqref{i=m+1_n-1} and \eqref{i=n}, as follows:
\begin{multline*}
\hspace{-0.5cm}
\sum_{i = 1} ^ {n} \norm{\D_i ^ {jn + m} - \nabla ^ 2 f_i(\z_i ^ {jn + m})} 
\leq 
mL\delta (1 - c) ^ {j + 1} + 
(n - m)\big((1 + M\sqrt{L} \epsilon \rho ^ j) ^ {2}L\delta(1 - c)^ {j} \nn 
\\
+ ML ^ {\frac{3}{2}}\epsilon \rho ^ {j}(2 + M\sqrt{L}\epsilon \rho ^ j)\big)\nn, 
\end{multline*}
\begin{align}
& \hspace{4.0cm}\leq mL\delta + (n - m) \big((1 + M\sqrt{L}\epsilon) ^ 2 L\delta + ML ^ {\frac{3}{2}}\epsilon(2 + M\sqrt{L}\epsilon)\big)\nn,
\\
& \hspace{4.0cm}
\stackrel{(a)}\leq nL\delta (1 + M\sqrt{L}\epsilon) ^ 2 + nML^{\frac{3}{2}}\epsilon (2 + M\sqrt{L}\epsilon)
\label{cumulative_bound},
\end{align}
where \((a)\) follows since \(mL \delta + (n - m) (1 + M\sqrt{L}\epsilon) ^ 2 L\delta < mL(1 + M\sqrt{L}\epsilon) ^ 2 L\delta + (n - m) (1 + M\sqrt{L}\epsilon) ^ 2 L\delta = nL(1 + M\sqrt{L}\epsilon) ^ 2 L\delta\).
	
{\textbf{Stage 1, Case 2}: $t \bmod n = 0$}

Since \(t = jn + m + 1\), this case is equivalent to considering \(m = n - 1\).
This is a simpler case, as compared to the previous case. Here, 
we have \(\D_i ^ {jn} = (1 + M\sqrt{L}\epsilon \rho ^ j) ^ 2\D_i ^ {jn - n + i}\), for all \(i \in [n-1]\),
and $\D_n^{jn}$ would be used as it is.

We follow exactly the steps leading up to \eqref{i=m+1_n-1} and \eqref{i=n}. For, $i \in [n-1]$, using the reasoning in the derivation of \eqref{i=m+1_n-1}, we get
\begin{align}\nn
\norm{\D_i ^ {jn + m} - \nabla^2 f_i(\z_i ^ {jn + m})} 
\leq (1 + M \sqrt{L}\epsilon \rho ^ j) ^ {2}L\delta(1 - c)^ {j} + ML ^ {\frac{3}{2}}\epsilon \rho ^ {j}(2 + M\sqrt{L}\epsilon \rho ^ j) 
,\nn
\end{align}
For $i=n$, from equation \eqref{i=n}, we get
\begin{align*}
    \norm{\D_i ^ {jn + m} - \nabla^2 f_i(\z_i ^ {jn + m})}
    \leq 
    (1 + M\sqrt{L}\epsilon \rho ^ j)L\delta (1 - c) ^ j +  ML ^ {\frac{3}{2}}\epsilon \rho ^ {j}(2 + M\sqrt{L}\epsilon \rho ^ j).
\end{align*}
We can now bound the quantity \(\sum_{i = 1} ^ {n} \norm{\D_i ^ {jn + m} - \nabla ^ 2 f_i(\z_i ^ {jn + m})}\) in the following manner:
{
\begin{align}\nn
\sum_{i = 1} ^ {n} \norm{\D_i ^ {jn + m} - \nabla ^ 2 f_i(\z_i ^ {jn + m})} 
& \leq  n(1 + M\sqrt{L}\epsilon \rho ^ j)L\delta (1 - c) ^ j +  nML ^ {\frac{3}{2}}\epsilon \rho ^ {j}(2 + M\sqrt{L}\epsilon \rho ^ j), \nn \\
& \stackrel{(a)}\leq nL\delta (1 + M\sqrt{L}\epsilon) ^ 2 + nML^{\frac{3}{2}}\epsilon (2 + M\sqrt{L}\epsilon)\label{cumulative_bound_II},
\end{align}
}
where \((a)\) follows by bounding the terms \(< 1\).

\textbf{Stage 2}

We now use the result from Stage $1$ to bound 
\(\|\z_{m + 1} ^ {jn + m + 1} - \x^\star\|\),
\begin{multline*}
\norme{\z_{m + 1} ^ {jn + m + 1} - \x^\star} 
\overset{\Lem. \ref{lem:distance_change_single_iter_ver_II}}{\leq} 
\frac{
    \tilde{L}\Gamma ^{jn + m}
    }{2} 
\sum_{i = 1} ^ {n} \norme{\z_{i} ^ {jn + m} - \x^\star} ^ 2 + \nn                  \\
\Gamma ^ {jn + m} \sum_{i = 1} ^ {n} \norm{\D_i ^ {jn + m} - \nabla ^2f_{i}(\z_i ^ {jn + m})}\norme{\z_{i}^{jn + m} - \x^\star},\nn  
\end{multline*}
\vspace{-3mm}
\begin{align}
&\hspace{1.5cm} \stackrel{(a)}
\leq 
\Gamma ^ {jn + m} \big(
    n\frac{\tilde{L}\epsilon}{2}\rho ^ {j} + \sum_{i = 1} ^ {n} \norm{\D_i ^ {jn + m} - \nabla ^2f_{i}(\z_i ^ {jn + m})} \rho ^ {j}
\big)\norm{\x^0 - \x^\star}\nn,
\\
&\hspace{1.1cm} \overset{\eqref{cumulative_bound}, \eqref{cumulative_bound_II}}{\leq}
\Gamma ^ {jn + m}\big(
    n\frac{\tilde{L}\epsilon}{2} + nL\delta (1 + M\sqrt{L}\epsilon) ^ 2 + nML^{\frac{3}{2}}\epsilon (2 + M\sqrt{L}\epsilon)
\big)\rho ^ {j}\norm{\x^0 - \x^\star}.\nn 
\end{align}
The inequality \((a)\) follows from the induction hypothesis that 
\(\norme{\z_{i} ^ {jn + m} - \x^\star} \leq \rho ^ {j} \norm{\x^0 - \x^\star}\), for all \(i \in [n]\backslash[m]\)
and \(\norme{\z_i ^ {jn + m} - \x^\star} \le \rho ^ {j + 1}\norme{\x^0 - \x^\star}\), for all \(i \in [m]\). 
Since $\rho < 1$, we can have a common upper bound, 
\(\norme{\z_{i} ^ {jn + m} - \x^\star} \leq \rho ^ j \norme{\x^0 - \x^\star}\), for all \(i \in [n]\). 

We now bound \(\Gamma ^ {jn + m}\).
Define 
\(\X^{jn + m} \coloneqq \tfrac{1}{n}\sum_{i = 1} ^ {n} \D_{i} ^ {jn + m}\) and
\(\Y^{jn + m} \coloneqq \tfrac{1}{n}\sum_{i = 1} ^ {n} \nabla ^ 2 f_{i}(\z_i ^ {jn + m})\).
We follow the same recipe when we bound \(\Gamma ^ 0\) in the base case. Observe that
\begin{align*}
    \frac{1}{n} \sum_{i = 1} ^ {n} \norm{\D_{i} ^ {jn + m} - \nabla ^ 2 f_{i}(\z_i^{jn + m})} \stackrel{\Delta} \geq\norm{\X^{jn + m} - \Y^{jn + m}} 
    \overset{(a)}{\geq} \mu\norm{(\Y^{jn + m})^{-1}\X^{jn + m} - \I}.
\end{align*}
The inequality \((a)\) follows 
from Assumption \ref{ass:smooth_strong} that 
\(
\mu \I \preceq \frac{1}{n} \sum_{i = 1} ^ n \nabla ^ 2 f_{i}(\z_i ^ {jn + m})
= \Y^{jn + m}\). 
Also, restating the result from Stage $1$, we have
\begin{align*}
    \frac{1}{n} \sum_{i = 1} ^ {n} \norm{\D_{i} ^ {jn + m} - \nabla ^ 2 f_{i}(\z_i^{jn + m})}
    \leq 
    L\delta (1 + M\sqrt{L}\epsilon) ^ 2 + ML^{\frac{3}{2}}\epsilon (2 + M\sqrt{L}\epsilon)
\end{align*}
From the above two chain of inequalities, we deduce
\begin{align*}
    \norme{(\Y^{jn + m})^{-1}\X^{jn + m} - \I}
    \leq \tfrac{1}{\mu}\bigg(
        L\delta (1 + M\sqrt{L}\epsilon) ^ 2 + ML^{\frac{3}{2}}\epsilon (2 + M\sqrt{L}\epsilon)\bigg)
        \stackrel{\eqref{choice_variables_II}} < \frac{\rho}{1 + \rho}.
\end{align*}
We can now upper bound \(\Gamma ^ {jn + m}\) using Banach's Lemma by exactly following the procedure laid out in the base case. We get that 
\(\Gamma^{jn+m} = \norme{(\sum_{i = 1} ^ {n} \D_{i} ^ {jn + m})^{-1}} \leq \frac{1 + \rho}{n\mu}\). Substituting this above, we get
\begin{align}\nn
\norme{\z_{m + 1} ^ {jn + m + 1} - \x^\star}& \leq\frac{1 + \rho}{n\mu}\big(n\frac{\tilde{L}\epsilon}{2} + nL\delta (1 + M\sqrt{L}\epsilon) ^ 2 + nML^{\frac{3}{2}}\epsilon (2 + M\sqrt{L}\epsilon)\big)\rho ^ {j}\norm{\x^0 - \x^\star}, \\
&\stackrel{\eqref{choice_variables_II}}\leq \rho ^ {j + 1} \norm{\x^0 - \x^\star}\label{induction_step_eff_1}.
\end{align}
This completes the induction step proof for \eqref{contraction_verII} at \(t = jn + m + 1\).
	
\textbf{Stage 3}

In this stage, we
prove the linear convergence of the updated Hessian approximation, that is, we show that
\(\sigma(\omega_{jn + m + 1} ^ {-1}\D_{m + 1} ^ {jn + m + 1}, \nabla ^ 2 f_{m + 1}(\z_{m + 1} ^ {jn + m + 1})) \leq (1 - c) ^ {j + 1} \delta\). But first, we establish that the $\sigma(\cdot)$ metric is well defined, by showing that
\begin{align}\label{prove_this}\omega_{jn + m + 1} ^ {-1}\D_{m + 1} ^ {jn + m + 1} \succeq \nabla ^ 2 f_{m + 1}(\z_{m + 1} ^ {jn + m + 1}).
\end{align}
We make two observations to establish \ref{prove_this}. The first observation is that
\begin{align*}
    \omega_{jn + m} ^ {-1} \D_{m + 1} ^ {jn + m} 
    \overset{(a)}{\succeq} 
    \nabla ^ 2 f_{m + 1}(\z_{m + 1} ^ {jn + m})
    \overset{(b)}{\succeq} 
    (1 + \tfrac{1}{2}Mr_{jn + m + 1})^{-1} \K ^ {jn + m},
\end{align*}
where $(a)$ follows from the
induction hypothesis
and 
$(b)$ follows from Lemma \ref{integral_lemma}.
For convenience, we restate that 
\(r_{t} \coloneqq \norme{\z_{i_t} ^ t - \z_{i_t} ^ {t - 1}}_{\z_{i_t} ^ {t - 1}}.\) 

For the next observation, we first bound \(r_{jn + m + 1}\) as we did in Corollary \ref{cor:up_beta}, in the following manner:
\begin{align}
r_{jn + m + 1} = 
\norme{\z_{m + 1} ^ {jn + m + 1} - \z_{m + 1} ^ {jn + m}}_{\z_{m + 1} ^ {jn + m}} &\stackrel{(a)}\leq \sqrt{L} \norme{\z_{m + 1} ^ {jn + m + 1} - \z_{m + 1} ^ {jn + m}}, \nn \\
&= \sqrt{L} \norme{\z_{m + 1} ^ {jn + m + 1} - \x ^ \star - \z_{m + 1} ^ {jn + m} + \x ^ \star}, \nn \\
&\stackrel{{\Delta}}\leq \sqrt{L}\left(\norme{\z_{m + 1} ^ {jn + m + 1} - \x ^ \star} + \norme{\z_{m + 1} ^ {jn + m} - \x^\star}\right),\nn \\
&\stackrel{(b)}\leq \sqrt{L} \big(\rho ^ {j + 1} + \rho ^ {j}\big) \epsilon \leq 2 \sqrt{L}\rho ^ {j} \epsilon = \frac{2\alpha_j}{M} \nn,  
\end{align}
where \((a)\) follows from Assumption \ref{ass:smooth_strong} which implies \(\nabla ^ 2 f_{m + 1}(\z_{m + 1} ^ {jn + m}) \preceq L\I\), $(b)$ follows from the induction hypothesis and \eqref{induction_step_eff_1}. 
Therefore, our second observation is that 
$
\frac{Mr_{jn + m + 1}}{2} \le \alpha_j.
$

  We consider two cases depending on \(m\) (or equivalently \(t\)), similar to Stage 1.
  
  \textbf{{Case 1: \(0 \leq m < n\)}}

  Since all the \(\D_{i}\)'s were scaled by a factor \((1 + \alpha_j) ^ 2\) at the end of the cycle \(j - 1\), i.e., at \(t = j n\), we have \(\D_{m + 1} ^ {jn + m} = (1 + \alpha_{j}) ^ 2 \D_{m + 1} ^ {(j - 1)n + m + 1}\). Also, from the induction hypothesis, we have\begin{align*}
        \omega_{(j - 1)n + m + 1} ^ {-1} \D_{m + 1} ^ {(j - 1)n + m + 1} \succeq \nabla ^ 2 f_{m + 1}(\z_{m + 1} ^ {(j - 1)n + m + 1}).
  \end{align*}
  Note that if \((j - 1)n + m + 1 < 0\), we can simply assume those quantities to be super scripted/sub scripted (as appropriate) with \(0\). For example, \(\D_{m + 1} ^ {-n + m + 1} = \D_{m + 1} ^ {0}, \omega_{-n + m + 1} = \omega_0\), etc.

  Since \(0 \le m < n\), we have \(\omega_{(j - 1)n + m + 1} = 1\). Further, \(\z_{m + 1} ^ {jn + m} = \z_{m + 1} ^ {(j - 1)n + m + 1}\). Therefore, \begin{align*}
      \D_{m + 1} ^ {jn + m} = (1 + \alpha_{j}) ^ 2 \D_{m + 1} ^ {(j - 1)n + m + 1} &\succeq (1 + \alpha_j) ^ 2 \nabla ^ 2 f_{m + 1}(\z_{m + 1} ^ {jn + m}), \nn \\
      &\hspace{-3mm}\stackrel{\Lem. \ref{boundedness}}\succeq \big(1 + \alpha_j \big) ^ {2}\big(1 + \frac{Mr_{jn + m + 1}}{2}\big) ^ {-1}\K ^ {jn + m + 1}\nn, \\
      &\succeq \big(1 + \alpha_j \big)\K ^ {jn + m + 1}\nn.
  \end{align*}

\textbf{{Case 2: \(m = n - 1\)}}

Since the current index \(m + 1\) was last updated at time \(t = jn\), we have \(\D_{m + 1} ^ {jn + m} = \D_{m + 1} ^ {jn}\) and \(\z_{m + 1} ^ {jn + m} = \z_{m + 1} ^ {jn}\). Further, the induction hypothesis yields \(\omega_{jn} ^ {-1}\D_{m + 1} ^ {jn} \succeq \nabla ^ 2f_{m + 1}(\z^{jn}_ {m + 1})\). Also, \(\omega_{jn} = (1 + \alpha_{j}) ^ 2\) by definition. Therefore, \begin{align}\nn
    \D_{m + 1} ^ {jn + m} = \omega_{jn} (\omega_{jn} ^ {-1}\D_{m + 1} ^ {jn}) &\succeq  (1 + \alpha_{j}) ^ 2 \nabla ^ 2 f_{m + 1}(\z_{m + 1} ^ {jn + m}). \\
    &\hspace{-3mm}\stackrel{\Lem. \ref{integral_lemma}}\succeq\big(1 + \alpha_j \big) ^ {2}\big(1 + \frac{Mr_{jn + m + 1}}{2}\big) ^ {-1}\K ^ {jn + m + 1}\nn, \\
    &\succeq \big(1 + \alpha_j \big)\K ^ {jn + m + 1}\nn.
\end{align}

Summarizing, for both cases \(0 \leq m < n\) and \(m = n - 1\), we have established that \( \D_{m + 1} ^ {jn + m}  \succeq (1 + \alpha_{j}) \K ^ {jn + m + 1}\). The next steps are common for both the cases.

 Since \(\D_{m + 1} ^ {jn + m} \succeq \big(1 + \alpha_j \big)\K ^ {jn + m + 1},\) applying Lemma \ref{boundedness}, we obtain \begin{align*}
       \Q ^ {jn + m + 1} &= \bfgs{\D ^ {jn + m}, \big(1 + \alpha_j \big)\K ^ {jn + m + 1}, \z_{m + 1} ^ {jn + m + 1} - \z_{m + 1} ^ {jn + m + 1}}, \nn\\
       &\stackrel{\Lem. \ref{boundedness}}\succeq \big(1 + \alpha_j \big)\K ^ {jn + m + 1}.
  \end{align*}
  Applying Lemma \ref{integral_lemma} to relate \(\K ^ {jn + m + 1}\) and \(\nabla ^ 2 f_{m + 1}(\z_{m + 1} ^ {jn + m})\), we obtain \begin{align*}
      \Q ^ {jn + m + 1} \succeq \big(1 + {\alpha_j}\big) \K ^ {jn + m + 1} \succeq \big(1 + \frac{Mr_{jn + m + 1}}{2}\big) \K ^ {jn + m + 1} \stackrel{\Lem. \ref{boundedness}}\succeq \nabla ^ 2 f_{i_m}(\z_{i_m} ^ m).
  \end{align*}
  Since \(\Q ^ {jn + m + 1} \succeq \nabla ^ 2 f_{m + 1}(\z_{m + 1} ^ {jn + m + 1})\), applying Lemma \ref{boundedness}, we obtain \begin{align}\nn
      \omega_{jn + m + 1} ^ {-1}\D_{m + 1} ^ {jn + m + 1} &= \bfgs{\Q ^ {jn + m + 1}, \nabla ^ 2 f_{m + 1}(\z_{m + 1} ^ {jn + m + 1}), \bar{\u}(\Q ^ {jn + m + 1}, \nabla ^ 2 f_{m + 1}(\z_{m + 1} ^ {jn + m + 1})},\nn \\
      &\stackrel{\Lem. \ref{boundedness}}\succeq \nabla ^ 2 f_{m + 1}(\z_{m + 1} ^ {jn + m + 1})\nn.
  \end{align}
  
   We can now prove the linear convergence of the Hessian approximation.

We define the sequence \(\{\y_k\}\), for \(k = 0, \dots, j + 1\), such that 
\(\{\y_k\} = \{\x ^ 0, \z_{m + 1} ^ {m + 1}, \dots, \z_{m + 1} ^ {jn + m + 1}\}\). 
From the induction hypothesis and Stage $2$, 
we have that $\norme{\y_k - \x^\star} \le \rho^{k} \norme{\x^0 - \x^\star}$, for all $k$.
Since \(\{\y_k\}\) comes about from the application of BFGS updates with,
$r_{k} := 2\sqrt{L}\rho ^ {k - 1}\epsilon$, 
as described in the statement of Lemma \ref{lem:sig}, 
therefore $\{\y_k\}$ satisfies the conditions of Lemma \ref{lem:sig}. This implies that,
\begin{align}
\sigma(\omega_{jn + m + 1} ^ {-1} \D_{m + 1} ^ {jn + m + 1}, \nabla ^ {2}f_{m + 1}(\y_{j + 1})) \leq (1 - c) ^ {j + 1} \delta.\nn
\end{align}
Since \(\y_{j + 1} = \z_{m + 1} ^ {jn + m + 1}\), the proof is complete via induction.

\subsection{Proof of Lemma \ref{lem:mean_linear_verII}}\label{app:mean_linear_verII}
 We prove the Lemma for a generic iteration \(t = jn + m + 1\), for some \(j \geq 0\) and \( 0 \leq m < n\). 
 We restate a few observations derived in the proof of Lemma \ref{lem:convergence_efficient_h_iqn}.
First, we proved an upper bound on \(\Gamma ^ {jn + m} = \norme{\big(\sum_{i = 1} ^ {n} \D_{i} ^ {jn + m}\big) ^ {-1}}\), given by \begin{align}
		\Gamma ^ {jn + m} \leq \frac{1 + \rho}{n\mu}.
  \label{proof_l3_c3}
	\end{align}
	We also derived upper bounds \eqref{i=m+1_n-1}, \eqref{i=n} on \(\norme{\D_{i}^{jn + m} - \nabla ^ 2 f_i(\z_i^{jn + m})}\):  \begin{align}\nn
		\norm{\D_{i}^{jn + m} - \nabla ^ 2 f_i(\z_i^{jn + m})} 
        \leq \begin{cases}
			(1 + M \sqrt{L}\epsilon \rho ^ j) ^ {2}L\delta(1 - c)^ {j} + ML ^ {\frac{3}{2}}\epsilon \rho ^ {j}(2 + M\sqrt{L}\epsilon \rho ^ j) & i \in [n] \backslash [m],\\
			L\delta (1 - c) ^ {j + 1} & i \in [m].
		\end{cases}
	\end{align}
	A common larger upper bound for both the cases is given by
 \begin{align}
     \norm{\D_{i}^{jn + m} - \nabla ^ 2 f_i(\z_i^{jn + m})} \leq (1 + M \sqrt{L}\epsilon \rho ^ j) ^ {2}L\delta(1 - c)^ {j} + ML ^ {\frac{3}{2}}\epsilon \rho ^ {j}(2 + M\sqrt{L}\epsilon \rho ^ j),
     \:\: i \in [n].
     \label{proof_l3_c1}
 \end{align}
 Finally, we also established that,
 \begin{align}\label{proof_l3_c2}
     \norme{\z_i ^ {jn + m} - \x^\star} 
     &\leq \rho ^ {j + 1}\norme{\x^0 - \x^\star} 
     \leq \rho ^ {j}\norme{\x^0 - \x^\star},
     \:\: i \in [m]\nn \\
     \norme{\z_i ^ {jn + m} - \x^\star} 
     &\leq \rho ^ {j}\norme{\x^0 - \x^\star}, 
     \:\: i \in [n]\backslash[m].
 \end{align}
We are now ready to prove the mean-superlinear convergence.
From Lemma \ref{lem:distance_change_single_iter_ver_II}, we have 
\begin{multline*}
		\norme{\z_{m + 1} ^ {jn + m + 1} - \x^\star} 
        \leq 
        \Gamma ^{jn + m}\tfrac{\tilde{L}}{2} \sum_{i = 1} ^ {n} \norme{\z_{i} ^ {jn + m} - \x^\star} ^ 2 
        + \Gamma ^ {jn + m} \sum_{i = 1} ^ {n} \norm{\D_i ^ {jn + m} - \nabla ^2f_{i}(\z_i ^ {jn + m})}\norme{\z_{i}^{jn + m} - \x^\star}
        \nn,
        \\
       \hspace{2.55cm}
       \leq 
        \Gamma ^{jn + m}
        \sum_{i = 1} ^ {n} 
        \Big(
        \tfrac{\tilde{L}}{2} \norme{\z_{i} ^ {jn + m} - \x^\star} + \norm{\D_i ^ {jn + m} - \nabla ^2f_{i}(\z_i ^ {jn + m})}
        \Big)
        \norme{\z_{i}^{jn + m} - \x^\star}
        \nn,
\end{multline*}
We now substitute the upper bounds from \ref{proof_l3_c3}, \ref{proof_l3_c1}, and \ref{proof_l3_c2} to get
\begin{align}
		&\leq \tfrac{1 + \rho}{n\mu} \big(\tfrac{\tilde{L}}{2}\rho ^ {j} \epsilon + (1 + M \sqrt{L}\epsilon \rho ^ j) ^ {2}L\delta(1 - c)^ {j} + ML ^ {\frac{3}{2}}\epsilon \rho ^ {j}(2 + M\sqrt{L}\epsilon \rho ^ j)\big)\sum_{i = 1} ^ {n}\norme{\z_i ^ {jn + m} - \x^\star}\nn, \\
		&\leq \tfrac{1 + \rho}{n\mu}\big(\tfrac{\tilde{L}}{2}\rho ^ {j} \epsilon + (1 + M \sqrt{L}\epsilon) ^ {2}L\delta(1 - c)^ {j} + ML ^ {\frac{3}{2}}\epsilon \rho ^ {j}(2 + M\sqrt{L}\epsilon)\big)\sum_{i = 1} ^ {n}\norme{\z_i ^ {jn + m} - \x^\star}\nn,\\
		&\stackrel{(a)}\leq (1 - c) ^ j\tfrac{1 + \rho}{\mu}\big(\tfrac{\tilde{L}}{2} \epsilon + (1 + M \sqrt{L}\epsilon) ^ {2}L\delta + ML ^ {\frac{3}{2}}\epsilon(2 + M\sqrt{L}\epsilon)\big)\big(\tfrac{1}{n}\sum_{i = 1} ^ {n}\norme{\z_i ^ {jn + m} - \x^\star}\big)\nn, \\
		&\stackrel{\eqref{choice_variables_II}}\leq \rho (1 - c) ^ {j} \big(\tfrac{1}{n}\sum_{i = 1} ^ {n}\norme{\z_i ^ {jn + m} - \x^\star}\big) \stackrel{(b)}\leq (1 - c) ^ {j + 1} \big(\tfrac{1}{n}\sum_{i = 1} ^ {n}\norm{\x ^ {t - i} - \x^\star}\big)\nn,
	\end{align}
	where \((a)\) and \((b)\) follow since \(\rho < 1 - c\). This completes the proof.

\subsection{Proof of Theorem \ref{lem:final_rate}}\label{app:final_rate}
Define the sequence \(\{\zeta ^ t\}\), for all \(t \geq 0\), as 
\(
	\zeta ^ {t} \coloneqq \max_{j \in [n]} \norme{\x ^ {nt + j} - \x^\star}.
\)
	Let the constant $c$ be defined as \(c := \frac{\mu}{dL}\).  
 Then, from Lemma \ref{lem:mean_linear_verII}, we have \begin{align}
		\norm{\x ^ {nt + i} - \x^\star} 
        &\leq (1 - c) ^ {t + 1}\tfrac{1}{n} \sum_{j = 1} ^ {n} \norme{\x^{nt + i - j} - \x^\star} 
        \\
        &\leq (1 - c) ^ {t + 1} \max_{j \in [n]} \norme{\x^{nt + i - j} - \x^\star}.
        \label{seq_eqn}
	\end{align}
	By induction on \(i\), we prove the following first:\begin{align}\label{induction_hyp}
		\norm{\x^{nt + i} - \x^\star} \leq (1 - c) ^ {t + 1} \max_{j = 1, \dots, n} \norm{\x^{nt + 1 - j} - \x^\star} = (1 - c) ^ {t + 1} \zeta ^ {t - 1},
	\end{align}
	for \(i = 1, \dots, n\). Substituting \(i = 1\) in \eqref{seq_eqn} we get \begin{align}\nn
		\norm{\x^{nt + 1} - \x^\star} \leq (1 - c) ^ {t + 1} \max_{j = 1, \dots, n} \norm{\x^{nt + 1 - j} - \x^\star} = (1 - c) ^ {t + 1} \zeta ^ {t - 1}.
	\end{align}
	This proves the base step (\(i = 1\)). Assume, \eqref{induction_hyp} holds for \(i = k\). We prove that \eqref{induction_hyp} holds for \(i = k + 1\). Substituting \(i = k + 1\) in \eqref{seq_eqn} we get \begin{align}\nn
		\norm{\x^{nt + k + 1} - \x^\star}&\leq (1 - c) ^ {t + 1} \max_{j = 1, \dots, n} \norm{\x^{nt + k + 1 - j} - \x^\star},\\
		&\leq (1 - c) ^ {t + 1} \max_{j = 1, \dots, n + k} \norm{\x^{nt + k + 1 - j} - \x^\star},\nn\\
		&= (1 - c) ^ {t + 1} \max{\big(\norm{\x^{nt + k} - \x^\star}, \dots, \norm{\x^{nt + 1} - \x^\star},  \max_{j = 1, \dots, n}\norm{\x^{nt + 1 - j} - \x^\star}\big)}\nn, \\
		&\stackrel{(a)}\leq (1 - c) ^ {t + 1} \max_{j = 1, \dots, n} \norm{\x^{nt + 1 - j} - \x^\star}\nn,
	\end{align}
	where \((a)\) follows since \(\norm{\x ^ {nt + 1} - \x^\star} \leq (1 - c) ^ {t + 1}\max_{j = 1, \dots, n}\norm{\x^{nt + 1 - j} - \x^\star}, \dots, \norm{\x ^ {nt + k} - \x^\star} \leq (1 - c) ^ {t + 1}\max_{j = 1, \dots, n}\norm{\x^{nt + 1 - j} - \x^\star}\) by the induction hypothesis. Therefore \eqref{induction_hyp} holds for \(i = k + 1\). This proves \eqref{induction_hyp} for \(i = 1, \dots, n\).
	
	Since \eqref{seq_eqn} holds for \(i = 1, \dots, n\), we have \begin{align}\label{zeta_eq}
		\zeta ^ {t} \leq (1 - c) ^ {t + 1} \zeta ^ {t - 1},
	\end{align}
for all \(t \geq 1\).
	Unrolling this recursion, we get \begin{align}\nn
		\zeta ^ {t} \leq (1 - c) ^ {\sum_{j = 2} ^ {t + 1} j} \zeta ^ {0} \stackrel{(a)}\leq (1 - c) ^ {\frac{t(t + 3)}{2}}\rho \norm{\x^0 - \x^\star} \stackrel{(b)}\leq \epsilon(1 - c) ^ {\frac{(t + 1)(t + 2)}{2}},
	\end{align}
	where \((a)\) follows from Lemma \ref{lem:convergence_efficient_h_iqn} and \((b)\) follows since \(\rho < 1-  c\). This completes the proof.

\section{GENERALIZED SHARPENED INCREMENTAL QUASI-NEWTON METHOD (G-SLIQN)}\label{sec:restricted_broyden}
\label{app:broyden_extension}
In this section, we extend the SLIQN algorithm, whose Hessian approximation updates \ref{eff_first_modified_update_}, \ref{eff_second_modified_update}
are built on the $\text{BFGS}$ operator, to the class of  \textit{restricted Broyden} operators. 
We refer to this class of algorithms as 
G-SLIQN.
First, we define the DFP operator
\begin{align*}
	\dfp{\B, \K, \z}
	\coloneqq
	\B -
	\frac{\K\z{\z}^\T\B + \B\z{\z}^\T\K} {\ip{\z}{\K\z}} + 
	\left( 1 + \frac{\ip{\z}{\B\z}}{\ip{\z}{\K\z}} \right)       \frac{\K\z{\z}^\T\K}{\ip{\z}{\K\z}},
\end{align*}
for \(\B, \K \succ \mathbf{0}\) and \(\z \in \Rn ^ d\backslash \{\mathbf{0}\}\). The {restricted Broyden} operator with parameter $0 \le \tau \le 1$ is defined as a convex combination of DFP and BFGS as follows: 
\begin{align*}
	\Br{\tau}{ \B,\K,\z}{\textit{res}} &\coloneqq 
	\tau\text{DFP}\left( \B,\K,\z \right) + 
	(1-\tau)\text{BFGS}\left( \B,\K,\z \right),
\end{align*}
where \(\text{BFGS}(\B, \K, \z)\) is given by \eqref{bfgs}.

\subsection{The generalized algorithm G-SLIQN}
\label{g-sliqn_alg}
As with SLIQN, we first present G-SLIQN with a maximum per-iteration cost of $\cO(nd^2)$ and an average \(\cO(d ^ 2)\) cost per epoch. 
Similar to SLIQN, G-SLIQN can be implemented with a maximum per-iteration cost of $\cO(d^2)$ as per Appendix \ref{update_efficiently}.

\textbf{Hyperparameters:} Choose $\tau_1,\tau_2 \in [0,1]$ as the restricted Broyd operator paramater for the classic and the greedy updates respectively. Note that setting $\tau_1=\tau_2=0$ reduces G-SLIQN to SLIQN.

We denote the Hessian approximation matrices at time \(t\) as \(\{\G_{i} ^ t\}_{i = 1} ^ n\).

\textbf{Initialize:} At \(t = 0\), we initialize \(\{\z_i ^ {0}\}_{i = 1} ^ {n}\) as \(\z_i ^ {0} = \x ^ 0\), \(\forall i \in [n]\), for a suitably chosen \(\x ^ 0\). We initialize \(\{\G_{i} ^ 0\}_{i = 1} ^ n\) as \(\G_{i} ^ 0 =  (1 + \alpha_0) ^ 2\I_i ^ 0\), where \(\{\I_i ^ 0\}_{i = 1} ^ n\) are chosen such that \(\I_{i} ^ 0 \succeq \nabla ^ 2 f_i (\z_i ^ 0)\), \(\forall i \in [n]\). Here \(\{\alpha_k\}, k \in \mathbb{N}\) is as defined in Lemma \ref{lem:convergence_efficient_h_iqn}.

\textbf{Algorithm:} For any iteration \(t \geq 1\), just like the update in SLIQN \ref{x_new_update}, we update $\x^t$ as
\begin{align}\label{g_x_new_update}
	\x ^ t = \bigg(\sum_{i = 1} ^ {n} \G_{i} ^ {t - 1}\bigg) ^ {-1}\bigg(\sum_{i = 1} ^ {n} \G_{i} ^ {t - 1}\z_{i} ^ {t - 1} - \sum_{i = 1} ^ {n} \nabla f_{i}(\z_{i} ^ {t - 1})\bigg).
\end{align} 
Next, we update \(\z_{i_t} ^ t\) as \(\z_{i_t} ^ t = \x^t\).
To update \(\Q ^ {t}\) and \(\G_{i_t} ^ t\), we use the chosen restricted Broyd operators in place of the BFGS operators \ref{eff_first_modified_update_}, 
\ref{eff_second_modified_update}: \begin{align}\label{g_eff_first_modified_update_}
	\Q ^ {t} &= \Br{\tau_1}{\G_{i_t} ^ {t - 1}, (1 + \alpha_{\ceil{t / n - 1}}) \K ^ t, \z_{i_t} ^ t - \z_{i_t} ^ {t - 1}}{\textit{res}}, \\
	\G_{i_t} ^ t &= \omega_t\Br{\tau_2}{\Q ^ {t}, \nabla ^ 2 f_{i_t}(\z_{i_t} ^ t), \bar{\u} ^ t (\Q^t, \nabla ^ 2 f_{i_t}(\z_{i_t} ^ t))}{\textit{res}} \label{g_eff_second_modified_update},
\end{align}
where \(\omega_t := (1 + \alpha_{\ceil{t / n}}) ^ 2\) if \(t\) is a multiple of \(n\) and \(1\) otherwise. For the indices \(i \neq i_t\), we update \(\z_{i} ^ {t}\) and \(\G_i ^ t\) in the following manner:
\begin{align}\label{g_gamma_i_neq_i_t}
	\z_i ^ t = \z_i ^ {t - 1}, \G_i ^ t = \omega_t \G_i ^ {t - 1}, \forall i \in [n]; i \neq i_t.
\end{align}

Finally, we update 
$
\left(\sum_{i = 1} ^ {n} \G_{i} ^ {t}\right) ^ {-1}
$
, $
\sum_{i = 1} ^ {n} \G_{i} ^ {t}\z_{i} ^ {t}
$ 
and
$\sum_{i = 1} ^ {n} \nabla f_{i}(\z_{i} ^ {t})
$.
Observe that the restricted Broyd operator induces a correction of at-most $5$ rank-$1$ matrices. 
We can therefore, carry out this update in $\cO(d^2)$ cost
by repeatedly applying Sherman-Morrison formula. This is similar to what was done for SLIQN in Appendix \ref{update_efficiently}. All other updates are a constant number of matrix-vector multiplications which can be done in \(\cO(d ^ 2)\) cost.

\begin{algorithm}
	\caption{Generalized Sharpened Lazy Incremental Quasi-Newton (G-SLIQN)}
	\begin{algorithmic}[1]
		\STATE \textbf{Function} \{Sherman-Morrison\} \{$\A^{-1}, \u, \v$\}
		\STATE \hspace{3mm} \textbf{return} $\A^{-1} - \tfrac{\A^{-1}\u\v^T\A^{-1}}{1 + \v^T\A^{-1}\u}$
		\STATE \textbf{EndFunction}
		\vspace{3mm}
		\STATE\textbf{Initialize:} Initialize \(\{\z_{i}, \G_{i}\}_{i = 1} ^ {n}\) as described in Section \ref{g-sliqn_alg};
		\STATE Evaluate \(\bar{\G} \coloneqq \big(\sum_{i}\G_i\big) ^ {-1}, \bm{\phi} \coloneqq  \sum_{i} \G_i \z_i\), and \(\g  \coloneqq \sum_i \nabla f_i (\z_i)\);
		\STATE\textbf{while} \textit{not converged:}
		\STATE\hspace{3mm}Current index to be updated is \(i_{t} \leftarrow (t - 1) \mod n + 1;\)
		\STATE\hspace{3mm}Update $\omega_t$ as $\omega_t \leftarrow 1 + \alpha_{\ceil{t/n - 1}}$;
		\IF {$i_t = 1$} 
		\STATE Update $\bar{\G}$ as $\bar{\G} \leftarrow \bar{\G}/\omega_t^2$;
		\STATE Update $\bm{\phi}$ as $\bm{\phi} \leftarrow \omega_t^2\bm{\phi}$;
		\ENDIF 
		\STATE\hspace{3mm}Update \(\x^t\) as  \(\x^t \leftarrow \big(\bar{\G}\big)\big(\bm{\phi}  - \g\big)\) as per (\ref{g_x_new_update});
		
		\STATE\hspace{3mm}Update $\G_{i_t}$ as  $\G_{i_t} \leftarrow \omega_t^2 \G_{i_t}$;
		\STATE\hspace{3mm}Update $\v_1$ as 
		$\v_1 \leftarrow \x ^ t - \z_{i_t}$;
		\STATE\hspace{3mm}Update $\Q_{i_t}$ as \(\Q_{i_t} \leftarrow \Br{\tau_1}{\G_{i_t},
			\omega_t\K ^ t, \v_1}{\textit{res}}\) as per \eqref{g_eff_first_modified_update_}; 
		\STATE\hspace{3mm}Update $\v_2$ as 
		$\v_2 \leftarrow \bar{\u} ^ t (\Q_{i_t}, \nabla ^ 2 f_{i_t}(\x^t))$;
		\STATE \hspace{3mm}Update $\tilde{\G}_{i_t}$ as $\tilde{\G}_{i_t} \leftarrow \Br{\tau_2}{\Q_{i_t}, \nabla ^ 2 f_{i_t}(\x^t), \v_2}{\textit{res}}$  as per \eqref{g_eff_second_modified_update};
		\STATE\hspace{3mm}Update \(\bi{\phi}\) as \(\bi{\phi} \leftarrow \bm{\phi}  - \G_{i_t}\z_{i_t} + \tilde{\G}_{i_t} \x^t\);
		
		\STATE\hspace{3mm}Update \(\g\) as  \(\g \leftarrow \g - \nabla f_{i_t}(\z_{i_t}) + \nabla f_{i_t}(\x^t)\);
		\STATE\hspace{3mm}Update \(\bar{\G}\) as 
		\(\bar{\G} \leftarrow \text{Sherman-Morrison}
		(
		\bar{\G},
		-\tfrac{\K\v_1}{\v_1^T\K\v_1},
		\G_{i_t}\v_1
		)
		\);
		\STATE\hspace{3mm}Update \(\bar{\G}\) as 
		\(\bar{\G} \leftarrow \text{Sherman-Morrison}
		(
		\bar{\G},
		-\G_{i_t}\v_1,
		\tfrac{\K\v_1}{\v_1^T\K\v_1}
		)
		\);
		\STATE\hspace{3mm}Update \(\bar{\G}\) as 
		\(\bar{\G} \leftarrow \text{Sherman-Morrison}
		(
		\bar{\G},
		\omega_t\left(1 + \tfrac{\v_1^T\G_{i_t}\v_1}{\omega_t^2 \v_1^T\K\v_1}\right)
		\tfrac{\K\v_1}{\v_1^T\K\v_1},
		\tfrac{\K\v_1}{\v_1^T\K\v_1}
		)
		\);
		\STATE\hspace{3mm}Update \(\bar{\G}\) as 
		\(\bar{\G} \leftarrow \text{Sherman-Morrison}
		(
		\bar{\G},
		-\tfrac{\Q_{i_t}\v_2}{\v_2^T\Q_{i_t}\v_2},
		\Q_{i_t}\v_2
		)
		\);
		\STATE\hspace{3mm}Update \(\bar{\G}\) as 
		\(\bar{\G} \leftarrow \text{Sherman-Morrison}
		(
		\bar{\G},
		\tfrac{\nabla^2f_{i_t}(\x^t)\v_2}{\v_2^T\nabla^2f_{i_t}(\x^t)\v_2},
		\nabla^2f_{i_t}(\x^t)\v_2
		)
		\);
		\STATE\hspace{3mm}Update \(\z_{i_t}\) as \(\z_{i_t} \leftarrow \x^t\);
		\STATE Increment the iteration counter \(t\);
		\STATE\textbf{end while}
		
	\end{algorithmic}
	\label{alg:sl_iqn}
\end{algorithm}

\subsection{Overview of the Convergence Analysis of G-SLIQN}

The analysis of SLIQN can be readily extended to G-SLIQN. Since most of the results established for SLIQN would continue to hold for G-SLIQN, we do not explictly state them here for the sake of brevity. However, we discuss the mappings that allow us to conclude that similar results hold for SLIQN in this section. 

{Firstly, the result in Lemma \ref{boundedness} holds even for the restricted Broyden operator as per \cite[Lemma 2.1]{rodomanov2021rates} and \cite[Lemma 2.2]{rodomanov2021rates}, which are restated here for completeness. Note that the results in \cite{rodomanov2021rates} are for the Broyden operator, but as per \cite{nocedal} the restricted Broyden operator is a subset of the Broyden operator, hence the results in \cite{rodomanov2021rates} are applicable for the restricted Broyden operator as well.}

\begin{lemma}\cite[Lemma 2.1]{rodomanov2021rates}\label{boundedness_broyd}
	Let, \(\G, \A\) be positive definite matrices such that \(\frac{1}{\xi} \A \preceq \G \preceq \A\), where \(\xi, \eta \geq 1\). Then, for any \(\u \neq 0\), and any \(\tau \in [0, 1]\), we have \begin{align}\nn
		\frac{1}{\xi}\A \preceq \G_{+} := \Br{\tau}{\G, \A, \u}{\textit{res}} \preceq \eta \A.
	\end{align}
\end{lemma}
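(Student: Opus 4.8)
The plan is to reduce the restricted Broyden bound to its two extreme cases, $\tau=0$ (pure BFGS) and $\tau=1$ (pure DFP), and then exploit the fact that the Loewner interval $\{\M : \tfrac{1}{\xi}\A \preceq \M \preceq \eta\A\}$ is convex. (We read the hypothesis as $\tfrac{1}{\xi}\A \preceq \G \preceq \eta\A$, matching Lemma \ref{boundedness}, which this statement generalizes; note also that $\G,\A \succ \zero$ and $\u \neq \zero$ make every denominator $\ip{\u}{\G\u}$, $\ip{\u}{\A\u}$ strictly positive, so all the updates are well defined.) Concretely, once I show $\tfrac{1}{\xi}\A \preceq \bfgs{\G,\A,\u} \preceq \eta\A$ and $\tfrac{1}{\xi}\A \preceq \dfp{\G,\A,\u} \preceq \eta\A$, writing $\Br{\tau}{\G,\A,\u}{\textit{res}} = \tau\,\dfp{\G,\A,\u} + (1-\tau)\,\bfgs{\G,\A,\u}$ and taking the same convex combination of the two chains of inequalities finishes the proof; positive definiteness of the result is then automatic since $\tfrac1\xi\A \succ \zero$.

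First I would handle BFGS via a monotonicity argument. For fixed $\A$ and $\u$, the map $\G \mapsto \bfgs{\G,\A,\u}$ is monotone in the Loewner order. This follows from the identity $v^\T\big(\G - \tfrac{\G\u\u^\T\G}{\ip{\u}{\G\u}}\big)v = \min_{t\in\mathbb{R}}(v-t\u)^\T\G(v-t\u)$ (the minimum over $t$ of a quadratic, attained at $t = \ip{\u}{\G v}/\ip{\u}{\G\u}$): if $\G_1 \preceq \G_2$ then $(v-t\u)^\T\G_1(v-t\u) \le (v-t\u)^\T\G_2(v-t\u)$ for every $t$, so the minima obey the same inequality, and since the extra term $\tfrac{\A\u\u^\T\A}{\ip{\u}{\A\u}}$ is common to $\bfgs{\G_1,\A,\u}$ and $\bfgs{\G_2,\A,\u}$, we get $\bfgs{\G_1,\A,\u} \preceq \bfgs{\G_2,\A,\u}$. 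Applying this to $\tfrac1\xi\A \preceq \G \preceq \eta\A$ and computing the endpoints directly, $\bfgs{\eta\A,\A,\u} = \eta\A - (\eta-1)\tfrac{\A\u\u^\T\A}{\ip{\u}{\A\u}} \preceq \eta\A$ and $\bfgs{\tfrac1\xi\A,\A,\u} = \tfrac1\xi\A + (1-\tfrac1\xi)\tfrac{\A\u\u^\T\A}{\ip{\u}{\A\u}} \succeq \tfrac1\xi\A$, where I use $\eta,\xi \ge 1$ and that $\tfrac{\A\u\u^\T\A}{\ip{\u}{\A\u}} \succeq \zero$.

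Next I would treat DFP directly through its factored form. Writing $\P := \I - \tfrac{\A\u\u^\T}{\ip{\u}{\A\u}}$, a short computation gives $\dfp{\G,\A,\u} = \P\G\P^\T + \tfrac{\A\u\u^\T\A}{\ip{\u}{\A\u}}$ and $\P\A\P^\T = \A - \tfrac{\A\u\u^\T\A}{\ip{\u}{\A\u}}$. Then $\tfrac1\xi\A \preceq \G \preceq \eta\A$ yields $\tfrac1\xi\,\P\A\P^\T \preceq \P\G\P^\T \preceq \eta\,\P\A\P^\T$, and adding back $\tfrac{\A\u\u^\T\A}{\ip{\u}{\A\u}}$ gives $\dfp{\G,\A,\u} \preceq \eta\A - (\eta-1)\tfrac{\A\u\u^\T\A}{\ip{\u}{\A\u}} \preceq \eta\A$ and $\dfp{\G,\A,\u} \succeq \tfrac1\xi\A + (1-\tfrac1\xi)\tfrac{\A\u\u^\T\A}{\ip{\u}{\A\u}} \succeq \tfrac1\xi\A$, again using $\eta,\xi \ge 1$. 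Combining the BFGS and DFP sandwich bounds with the convexity remark of the first paragraph completes the argument.

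I do not expect a real obstacle; the only mild subtlety is verifying the $\min$-over-$t$ identity that drives BFGS monotonicity. An alternative that sidesteps it is the BFGS--DFP inversion duality $\bfgs{\G,\A,\u}^{-1} = \dfp{\G^{-1},\A^{-1},\A\u}$, which lets one transfer the (already-proved) DFP bound to the inverses $\G^{-1},\A^{-1}$ and then invert back; this is essentially the route of \cite{rodomanov2021rates}, and since the restricted Broyden operator is a subset of the Broyden class (by \cite{nocedal}), their statement applies verbatim. Either way the remaining work is routine matrix algebra once the projection/factored forms of the updates are in place.
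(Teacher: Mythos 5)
Your proposal is correct, and it is worth noting that the paper itself never proves this lemma: it is imported verbatim from \cite{rodomanov2021rates} (Lemma 2.1 there), with only the remark that the restricted Broyden class is contained in the Broyden class, so the cited bound applies. (You also correctly repaired the statement's typo, reading the hypothesis as $\tfrac{1}{\xi}\A \preceq \G \preceq \eta\A$, consistent with Lemma \ref{boundedness}.) Your argument is therefore a self-contained substitute for the citation, and each of its three ingredients checks out: the variational identity $v^\T\bigl(\G - \tfrac{\G\u\u^\T\G}{\ip{\u}{\G\u}}\bigr)v = \min_{t}(v-t\u)^\T\G(v-t\u)$ does give Loewner monotonicity of $\G \mapsto \bfgs{\G,\A,\u}$, and the endpoint computations $\bfgs{\eta\A,\A,\u} = \eta\A - (\eta-1)\tfrac{\A\u\u^\T\A}{\ip{\u}{\A\u}}$, $\bfgs{\tfrac1\xi\A,\A,\u} = \tfrac1\xi\A + (1-\tfrac1\xi)\tfrac{\A\u\u^\T\A}{\ip{\u}{\A\u}}$ are right; the factored form $\dfp{\G,\A,\u} = \P\G\P^\T + \tfrac{\A\u\u^\T\A}{\ip{\u}{\A\u}}$ with $\P = \I - \tfrac{\A\u\u^\T}{\ip{\u}{\A\u}}$ reproduces the paper's DFP formula exactly, and conjugation by $\P$ plus $\xi,\eta\ge 1$ gives the DFP sandwich; finally, since $\Br{\tau}{\G,\A,\u}{\textit{res}}$ is by definition the convex combination $\tau\,\mathrm{DFP} + (1-\tau)\,\mathrm{BFGS}$ and the Loewner interval $\{\M : \tfrac1\xi\A \preceq \M \preceq \eta\A\}$ is convex, the conclusion follows for every $\tau\in[0,1]$. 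Compared with the route in the cited reference (which handles BFGS through the BFGS--DFP inversion duality you mention as an alternative), your direct treatment of BFGS via monotonicity avoids any appeal to inverses and is arguably more elementary, at the modest cost of proving the $\min$-over-$t$ identity; the citation route buys brevity but leaves the reader to trust the external source and the "restricted Broyden $\subset$ Broyden" remark.
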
 

\begin{lemma}\cite[Lemma 2.2]{rodomanov2021rates}\label{sigma_dec_broyd}
	Let \(\G, \A\) be positive definite matrices such that \(\A \preceq \G \preceq \eta \A\), for some \(\eta \ge 1\). Then, for any \(\tau \in [0, 1]\) and any \(\u \neq \mathbf{0}\), we have \begin{align}\nn
		\sigma(\G, \A) - \sigma(\Br{\tau}{\G, \A, \u}{\textit{res}}, \A) \geq \bigg(\frac{\tau}{\eta} + 1 - \tau\bigg) \theta ^ 2 (\G, \A, \u),
	\end{align}
	where \begin{align}\theta(\G, \A, \u) := \bigg(\frac{\ip{(\G - \A)\u}{\A ^ {-1}(\G - \A)\u}}{\ip{\G\u}{\A^{-1}\G\u}}\bigg) ^ {\frac{1}{2}}\nn.\end{align}
\end{lemma}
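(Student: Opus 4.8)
# Proof Proposal for Lemma \ref{sigma_dec_broyd}

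The plan is to establish this result as a direct consequence of the structure of the restricted Broyden operator as a convex combination of the DFP and BFGS updates, leveraging the known per-step decrease inequalities for each of these endpoints of the Broyden class. The key observation is that the $\sigma$-potential decrease $\sigma(\G, \A) - \sigma(\Br{\tau}{\G, \A, \u}{\textit{res}}, \A)$ should itself depend affinely on $\tau$, because $\sigma(\cdot, \A) = \langle \A^{-1}, \cdot \rangle - d$ is linear in its first argument and $\Br{\tau}{\G, \A, \u}{\textit{res}} = \tau \, \mathrm{DFP}(\G, \A, \u) + (1-\tau)\,\mathrm{BFGS}(\G, \A, \u)$ is affine in $\tau$.

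First I would write $\sigma(\Br{\tau}{\G, \A, \u}{\textit{res}}, \A) = \tau\, \sigma(\mathrm{DFP}(\G,\A,\u), \A) + (1-\tau)\,\sigma(\mathrm{BFGS}(\G,\A,\u),\A)$ using linearity of $\langle \A^{-1}, \cdot\rangle$, so that the potential decrease splits as $\tau\big(\sigma(\G,\A) - \sigma(\mathrm{DFP}(\G,\A,\u),\A)\big) + (1-\tau)\big(\sigma(\G,\A) - \sigma(\mathrm{BFGS}(\G,\A,\u),\A)\big)$. Second, I would invoke the classical exact identities for the DFP and BFGS potential decreases (as derived in \cite{rodomanov2021rates}): for BFGS one has $\sigma(\G,\A) - \sigma(\mathrm{BFGS}(\G,\A,\u),\A) = \theta^2(\G,\A,\u)$ exactly, and for DFP one has a decrease bounded below by $\tfrac{1}{\eta}\theta^2(\G,\A,\u)$ using the sandwich condition $\A \preceq \G \preceq \eta\A$ to control the relevant Rayleigh quotients. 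Substituting these two bounds into the affine combination yields the claimed lower bound $\big(\tfrac{\tau}{\eta} + 1 - \tau\big)\theta^2(\G,\A,\u)$.

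The main obstacle — and the step requiring genuine care rather than bookkeeping — is establishing the DFP decrease lower bound $\sigma(\G,\A) - \sigma(\mathrm{DFP}(\G,\A,\u),\A) \geq \tfrac{1}{\eta}\theta^2(\G,\A,\u)$, since unlike BFGS the DFP update does not give the potential decrease as a clean squared quantity; one must expand $\langle \A^{-1}, \mathrm{DFP}(\G,\A,\u)\rangle$, simplify the trace terms $\langle \A^{-1}, \K\z\z^\T\K\rangle / \langle \z, \K\z\rangle$-type expressions, and then deploy $\G \preceq \eta\A$ to bound a ratio of quadratic forms from below. Fortunately this computation is exactly \cite[Lemma 2.2]{rodomanov2021rates}, so in the interest of brevity I would cite it directly rather than reproduce it. A minor secondary point to verify is that the hypothesis $\A \preceq \G$ (with $\xi = 1$) is what guarantees $\sigma(\G,\A) \geq 0$ and hence that the quantity on the left is well-defined and the lemma is non-vacuous; this follows immediately since $\A \preceq \G \implies \langle \A^{-1}, \G - \A\rangle \geq 0$.
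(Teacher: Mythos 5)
The paper itself does not prove this lemma at all: it is imported verbatim as an established result, with the proof deferred entirely to \cite[Lemma 2.2]{rodomanov2021rates}. To the extent that your proposal also ultimately rests on citing that same reference, it is consistent with what the paper does, and your opening observation — that $\sigma(\cdot,\A)$ is linear in its first argument while $\Br{\tau}{\G,\A,\u}{\textit{res}}$ is affine in $\tau$, so the potential decrease splits as a convex combination of the DFP and BFGS endpoint decreases — is correct and is indeed the natural skeleton of the argument in the reference.

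However, two points in your sketch would not survive if the proof had to stand on its own. First, your claimed exact identity for BFGS is false: a direct computation gives $\sigma(\G,\A)-\sigma(\bfgs{\G,\A,\u},\A)=\ip{\G\u}{\A^{-1}\G\u}/\ip{\u}{\G\u}-1$, which is \emph{not} equal to $\theta^2(\G,\A,\u)$ in general; it is only lower-bounded by $\theta^2$, and establishing even that inequality uses the hypothesis $\A\preceq\G$ (via $\ip{\u}{\A\u}\le\ip{\u}{\G\u}$ together with a Cauchy--Schwarz bound). So the BFGS endpoint also requires an argument, not just bookkeeping. Second, your handling of the DFP endpoint is circular: you propose to cite \cite[Lemma 2.2]{rodomanov2021rates} for the bound $\sigma(\G,\A)-\sigma(\dfp{\G,\A,\u},\A)\ge\tfrac{1}{\eta}\theta^2(\G,\A,\u)$, but that lemma \emph{is} the statement being proved (it covers all $\tau\in[0,1]$; DFP is just the case $\tau=1$), so invoking it defeats the purpose of giving a proof. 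If the intent is simply to cite the known result, as the paper does, that is fine; if the intent is to prove it, the DFP computation ($\sigma$-decrease equals $\ip{\u}{\G\u}/\ip{\u}{\A\u}-1$, then lower-bounded using $\G\preceq\eta\A$) and the corrected BFGS inequality are precisely the content that is missing.
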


Since \(\tau \in [0, 1]\) and \(\eta \geq 1\), we have \(\frac{\tau}{\eta} + 1 - \tau \geq 0 \iff \tau \leq \frac{\eta}{\eta - 1}\) holds vacuously. Therefore, on taking a restricted Broyden update, we get \(\sigma(\G, \A) \geq \sigma(\Br{\tau}{\G, \A, \u}{\textit{res}}, \A)\) under the assumptions of Lemma \ref{sigma_dec_broyd}.

Futher, the result in Lemma \ref{one_greedy} holds for the restricted Broyd operator as per \cite[Theorem 2.5]{rodomanov2021greedy} which is restated here for completeness.
\begin{lemma}\cite[Theorem 2.5]{rodomanov2021greedy}\label{sigma_contraction_broyd}
	Let \(\G, \A\) be positive definite matrices such that \(\A \preceq \G\). Further, let \(\mu, L > 0\) be such that \(\mu \I \preceq \A \preceq L \I\). Then, for any \(\tau \in [0, 1]\), we have \begin{align}\nn
		\sigma(\Br{\tau}{\G, \A, \bar{\u}(\G, \A)}{\textit{res}}, \A) \leq \bigg(1 - \frac{\mu}{dL}\bigg) \sigma(\G, \A),
	\end{align}
	where \(\bar{\u}(\G, \A)\) the greedy vector \eqref{greedy_vec}.
\end{lemma}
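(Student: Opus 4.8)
The plan is to observe first that Lemma~\ref{sigma_contraction_broyd} is a verbatim restatement of \cite[Theorem 2.5]{rodomanov2021greedy}: since, as noted via \cite{nocedal}, the restricted Broyden family $\{\Br{\tau}{\cdot}{\textit{res}}:\tau\in[0,1]\}$ is a subclass of the Broyden class analysed there, the conclusion transfers unchanged, and the quickest route is simply to invoke that theorem. To keep the section self-contained, however, I would reproduce the argument, which runs exactly parallel to the BFGS case of Lemma~\ref{one_greedy}.

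First I would record well-definedness: since $\A\preceq\G$ and $\mu\I\preceq\A\preceq L\I$, Lemma~\ref{boundedness_broyd} gives $\G_{+}:=\Br{\tau}{\G,\A,\bar{\u}(\G,\A)}{\textit{res}}\succeq\A$, so every $\sigma(\cdot,\A)$ appearing below is well defined and nonnegative, and it then suffices to show that the greedy restricted Broyden step produces a decrease $\sigma(\G,\A)-\sigma(\G_{+},\A)\ge\tfrac{\mu}{dL}\,\sigma(\G,\A)$.

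The heart of the proof is a greedy estimate that I would establish next: for the greedy vector $\bar{\u}=\bar{\u}(\G,\A)$ from \eqref{greedy_vec},
\[
	\frac{\ip{\bar{\u}}{\G\bar{\u}}}{\ip{\bar{\u}}{\A\bar{\u}}}-1 \;\ge\; \frac{\mu}{dL}\,\sigma(\G,\A).
\]
Since $\bar{\u}$ maximises $\ip{\e_i}{\G\e_i}/\ip{\e_i}{\A\e_i}$ over $i\in[d]$, it also maximises $\ip{\e_i}{(\G-\A)\e_i}/\ip{\e_i}{\A\e_i}$; using $\ip{\e_i}{\A\e_i}\le L$ this maximum is at least $\tfrac1L\max_i\ip{\e_i}{(\G-\A)\e_i}\ge\tfrac1{dL}\sum_i\ip{\e_i}{(\G-\A)\e_i}=\tfrac1{dL}\,\mathrm{Tr}(\G-\A)$, and because $\G-\A\succeq\zero$ and $\A^{-1}\preceq\tfrac1\mu\I$ we get $\mathrm{Tr}(\G-\A)\ge\mu\,\mathrm{Tr}(\A^{-1}(\G-\A))=\mu\,\sigma(\G,\A)$, which yields the displayed bound.

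Finally I would turn this into a decrease bound for the update itself. For $\tau=0$ (pure BFGS) a direct computation gives $\sigma(\G,\A)-\sigma(\bfgs{\G,\A,\bar{\u}},\A)=\ip{\bar{\u}}{\G\A^{-1}\G\bar{\u}}/\ip{\bar{\u}}{\G\bar{\u}}-1$, and Cauchy--Schwarz ($\ip{\bar{\u}}{\G\bar{\u}}^{2}\le\ip{\bar{\u}}{\G\A^{-1}\G\bar{\u}}\,\ip{\bar{\u}}{\A\bar{\u}}$) bounds this from below by $\ip{\bar{\u}}{\G\bar{\u}}/\ip{\bar{\u}}{\A\bar{\u}}-1$, which is $\ge\tfrac{\mu}{dL}\,\sigma(\G,\A)$ by the estimate above. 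For general $\tau\in(0,1]$ the extra work is to rule out the DFP component of $\Br{\tau}{\cdot}{\textit{res}}$ eroding this decrease: the coarse bound of Lemma~\ref{sigma_dec_broyd} only furnishes a factor $\tfrac\tau\eta+1-\tau$ in front of $\theta^{2}(\G,\A,\bar{\u})$, so one needs the sharper, direction-specific analysis of \cite[Theorem 2.5]{rodomanov2021greedy}, in which $\theta^{2}$ along the greedy direction is itself controlled by $\ip{\bar{\u}}{\G\bar{\u}}/\ip{\bar{\u}}{\A\bar{\u}}-1$. That last point is the main obstacle; chaining it with the greedy estimate closes the argument to $\sigma(\G_{+},\A)\le(1-\tfrac{\mu}{dL})\,\sigma(\G,\A)$.
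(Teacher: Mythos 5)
Your primary route---simply invoking \cite[Theorem 2.5]{rodomanov2021greedy}---is exactly what the paper does: the lemma is stated by citation and no proof is given in the paper, so at that level your proposal matches. Your reconstruction is correct and complete for $\tau=0$, and your greedy estimate $\ip{\ub}{\G\ub}/\ip{\ub}{\A\ub}-1\ge\tfrac{\mu}{dL}\,\sigma(\G,\A)$ is right as written; however, the step you flag as ``the main obstacle'' for $\tau\in(0,1]$ is not actually an obstacle, and you do not need the $\theta^2$-based bound of Lemma~\ref{sigma_dec_broyd} or any sharper direction-specific control of $\theta^2$. Since $\sigma(\cdot,\A)=\ip{\A^{-1}}{\cdot\,-\A}$ is affine in its first argument and $\Br{\tau}{\G,\A,\u}{\textit{res}}=\tau\,\dfp{\G,\A,\u}+(1-\tau)\,\bfgs{\G,\A,\u}$, the decrease produced by the restricted Broyden update is the convex combination of the DFP and BFGS decreases. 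A direct trace computation, using $\ip{\A^{-1}}{\A\u\u^{\T}\G}=\ip{\A^{-1}}{\G\u\u^{\T}\A}=\ip{\u}{\G\u}$ and $\ip{\A^{-1}}{\A\u\u^{\T}\A}=\ip{\u}{\A\u}$, gives
\begin{align*}
\sigma(\G,\A)-\sigma(\dfp{\G,\A,\u},\A)&=\frac{\ip{\u}{\G\u}}{\ip{\u}{\A\u}}-1,\\
\sigma(\G,\A)-\sigma(\bfgs{\G,\A,\u},\A)&=\frac{\ip{\u}{\G\A^{-1}\G\u}}{\ip{\u}{\G\u}}-1\;\ge\;\frac{\ip{\u}{\G\u}}{\ip{\u}{\A\u}}-1,
\end{align*}
the last inequality being precisely your Cauchy--Schwarz step. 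Hence for every $\tau\in[0,1]$ the decrease along any direction $\u$ is at least $\ip{\u}{\G\u}/\ip{\u}{\A\u}-1$, and chaining this with your greedy estimate (with $\u=\ub(\G,\A)$) yields $\sigma(\Br{\tau}{\G,\A,\ub(\G,\A)}{\textit{res}},\A)\le\bigl(1-\tfrac{\mu}{dL}\bigr)\sigma(\G,\A)$ for the whole restricted Broyden class, closing the only gap in your sketch without appeal to the cited theorem.
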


Using Lemma \ref{boundedness_broyd}, \ref{sigma_dec_broyd}, \ref{sigma_contraction_broyd}, we can establish that the Lemma \ref{lem:unit_step_dec} (with BFGS replaced by restricted Broyd), Corollary \ref{cor:imp}, and Lemma \ref{lem:sig} (with BFGS replaced by restricted Broyd) hold. Therefore, the supporting lemmas in Appendix \ref{app:supporting_lemma} hold even for the Broyden update. 

Next, we discuss about the main results in Section \ref{convergence}. Firstly, observe that Lemma \ref{lem:distance_change_single_iter_ver_II} remains the same for G-SLIQN. This is because the proof of Lemma \ref{lem:distance_change_single_iter_ver_II}
hinges on the structure of update \eqref{x_new_update} and abstracts out the specific updates made to $\D_{i_t}^t$. Since the update \eqref{g_x_new_update} for G-SLIQN is the same as \eqref{x_new_update}, the guarantees of the Lemma and its proof carries through for G-SLIQN. Further, since the supporting lemmas in Appendix \ref{app:supporting_lemma} hold for G-SLIQN, using Lemma \ref{lem:distance_change_single_iter_ver_II} and the supporting lemmas, we can establish that Lemma \ref{lem:convergence_efficient_h_iqn} holds even for G-SLIQN. Since the proof of Lemma \ref{lem:mean_linear_verII} leverages the result of Lemma \ref{lem:convergence_efficient_h_iqn}, we can establish that the mean superlinear convergence result given by Lemma \ref{lem:mean_linear_verII} holds for G-SLIQN as well. Finally, using Lemma \ref{lem:mean_linear_verII} we can show that Theorem \ref{lem:final_rate} holds for G-SLIQN.

%
%
%
%
%

\section{NUMERICAL SIMULATIONS}\label{sec:numerical_simulations}

As can be clearly observed, the proposed algorithm SLIQN requires the knowledge about \(\epsilon, \sigma_0\) in order to tune the correction factor \(\alpha_t\). However, we observed that, empirically SLIQN outperforms a number of incremental and stochastic QN methods without the correction factor, i.e., \(\alpha_t = 0\). For IGS however, the performance is quite sensitive to the correction factor, $\beta_t$, and \(\beta_t = 0\) was not the best performing correction factor for all the simulations. Therefore, SLIQN does not require hyper-parameter tuning, unlike IGS.

\textbf{Initialization:} For all our simulations, all algorithms start at the same initial \(\x_{0} = \alpha \v\), where \(\v \in \Rn ^ d\) is such that each coordinate \(v_{i, j} \sim \text{Unif}[0, 1]\). Since, all the algorithms considered for performance comparison are only locally convergent, the parameter \(\alpha\) affects the convergence of the algorithms.

\textbf{Stopping Criterion:} We stop the execution of each algorithm when the gradient norm of \(f\) is less than a threshold. Formally, letting the threshold be \textit{gstop}, the stopping condition can be expressed as \begin{align*}
	\frac{1}{N}\norm{\sum_{i = 1} ^ {N} \nabla f_{i}(\x ^ t)} < \textit{gstop}.
\end{align*}
Typical values of \textit{gstop} used in our simulations range from \(10 ^ {-7}\) to \(10 ^ {-8}\).
\subsection{Generating Scheme for Quadratic minimization}\label{app:numerical_simulations}
We follow the scheme proposed in \cite{mokhtari2018iqn} to generate \(\{\A_i, \bm{b}_i\}_{i = 1} ^ n\).
We set
each matrix
\(\A_i \coloneqq \mathrm{diag}(\{\a_{i}\}_{i = 1} ^ d) \), 
by sampling the diagonal elements as  
\(
\{\a_{i}\}_{i = 1} ^ {d/2} 
\overset{\text{i.i.d.}}{\sim} 
\text{Unif}[1, 10 ^ {\frac{\xi}{2}}]
\) 
and 
\(
\{\a_{i}\}_{i = d/2+1} ^ {d} 
\overset{\text{i.i.d.}}{\sim} 
\text{Unif}[10 ^ {-\frac{\xi}{2}}, 1]
\).  
The parameter \(\xi\) controls the condition number of the matrix \(\A_i\).  
Under the limit $d \rightarrow \infty$, the condition number of \(\A_i\) is given by $10^{\xi}$. 
Each coordinate $b_{i, j}$ of the vector \(\b_i\) is sampled as 
\(b_{i, j} \sim \text{Unif}[0, 1000]\).

\end{document}